\numberwithin{equation}{section}
\numberwithin{figure}{section}
\theoremstyle{plain}
\newtheorem{thm}{\protect\theoremname}[section]
\theoremstyle{definition}
\newtheorem{defn}[thm]{\protect\definitionname}
\theoremstyle{plain}
\newtheorem{prop}[thm]{\protect\propositionname}
\theoremstyle{plain}
\newtheorem{cor}[thm]{\protect\corollaryname}
\theoremstyle{remark}
\newtheorem{rem}[thm]{\protect\remarkname}
\theoremstyle{plain}
\newtheorem{lem}[thm]{\protect\lemmaname}
\DeclareMathOperator{\divv }{div}
\DeclareMathOperator{\dist }{dist}
\let\orgdescriptionlabel\descriptionlabel
\renewcommand*{\descriptionlabel}[1]{%
  \let\orglabel\label
  \let\label\@gobble
  \phantomsection
  \edef\@currentlabel{#1}%
  \let\label\orglabel
  \orgdescriptionlabel{#1}%
}
\providecommand{\corollaryname}{Corollary}
\providecommand{\definitionname}{Definition}
\providecommand{\lemmaname}{Lemma}
\providecommand{\propositionname}{Proposition}
\providecommand{\remarkname}{Remark}
\providecommand{\theoremname}{Theorem}
\begin{document}

\title[Optimal Regularity for Subelliptic No-Sign Obstacle-Type Problems]{Optimal Regularity of Solutions to No-Sign Obstacle-Type Problems
for the Sub-Laplacian}

\author{Valentino Magnani}
\address{Dipartimento di Matematica, Universit\`a di Pisa, Largo Bruno Pontecorvo 5, 56127, Pisa, Italy}
\email{valentino.magnani@unipi.it}

\author{Andreas Minne}
\address{KTH Royal Institute of Technology, 100 44 Stockholm, Sweden}
\email{minne@kth.se}

\begin{abstract}
We establish the optimal $C_{H}^{1,1}$ interior regularity of solutions
to
\[
\Delta_{H}u=f\chi_{\{u\ne0\}},
\]
where $\Delta_{H}$ denotes the sub-Laplacian operator in a stratified
group. We assume the weakest regularity condition on $f$, namely the group convolution $f*\Gamma$ is $C_{H}^{1,1}$, where $\Gamma$ is the fundamental
solution of $\Delta_{H}$. The $C_{H}^{1,1}$ regularity is understood
in the sense of Folland and Stein. In the classical Euclidean setting,
the first seeds of the above problem are already present in the 1991
paper of Sakai and are also related to quadrature domains. As a special
instance of our results, when $u$ is nonnegative and satisfies the
above equation we recover the $C_{H}^{1,1}$ regularity of solutions
to the obstacle problem in stratified groups, that was previously
established by Danielli, Garofalo and Salsa. Our regularity result
is sharp: it can be seen as the subelliptic counterpart of the $C^{1,1}$
regularity result due to Andersson, Lindgren and Shahgholian.
\end{abstract}

\keywords{Sub-Laplacian, Obstacle problem, Subelliptic equations, Stratified groups}
\thanks{V.M. acknowledges the support of the University of Pisa, Project PRA 2018 49. 
A.M. acknowledges the support of the Knut and Alice Wallenberg Foundation, Project KAW 2015.0380.}
\subjclass[2020]{35H20; 35R35}

\maketitle
\global\long\def\dist{\operatorname{dist}}
\foreignlanguage{english}{}\global\long\def\divv{\operatorname{div}}
\foreignlanguage{english}{}\global\long\def\iid{\operatorname{id}}
\foreignlanguage{english}{}\global\long\def\aad{\operatorname{ad}}
\foreignlanguage{english}{}\global\long\def\SL{\operatorname{SL}}
\foreignlanguage{english}{}\global\long\def\SO{\operatorname{SO}}
\foreignlanguage{english}{}\global\long\def\OO{\operatorname{O}}
\foreignlanguage{english}{}\global\long\def\Mob{\operatorname{Mob}}
\foreignlanguage{english}{}\global\long\def\n#1{\|#1\|}
\foreignlanguage{english}{}\global\long\def\ns#1{\|#1\|^{2}}
\foreignlanguage{english}{}\global\long\def\ip#1#2{\langle#1,#2\rangle}
\foreignlanguage{english}{}\global\long\def\Iim{\operatorname{Im}}
\foreignlanguage{english}{}\global\long\def\Rre{\operatorname{Re}}
\foreignlanguage{english}{}\global\long\def\Ric{\operatorname{Ric}}
\foreignlanguage{english}{}\global\long\def\trace{\operatorname{tr}}
\foreignlanguage{english}{}\global\long\def\diam{\operatorname{diam}}
\foreignlanguage{english}{}\global\long\def\vol{\operatorname{vol}}
\foreignlanguage{english}{}\global\long\def\exp{\operatorname{exp}}
\foreignlanguage{english}{}\global\long\def\Heis{H}
\foreignlanguage{english}{}\global\long\def\df{\vcentcolon=}
\foreignlanguage{english}{}\global\long\def\fd{=\vcentcolon}
\foreignlanguage{english}{}\global\long\def\dil#1{\delta_{#1}}
\foreignlanguage{english}{}\global\long\def\mult{}
\foreignlanguage{english}{}\global\long\def\G{\mathbb{G}}
\foreignlanguage{english}{}\global\long\def\R{\mathbb{R}}
\foreignlanguage{english}{}\global\long\def\N{\mathbb{N}}
\newcommand{\der}{\partial}
\foreignlanguage{english}{}\global\long\def\ph{\varphi}
\foreignlanguage{english}{}\global\long\def\sm{\operatorname{\setminus}}
\foreignlanguage{english}{}\global\long\def\B{\mathbb{\mathbb{B}}}

\selectlanguage{english}%

\section{Introduction}

The main question we consider in this paper is the optimal interior
regularity of distributional solutions to the \emph{no-sign obstacle-type problem}
\begin{equation}
\Delta_{H}u=f\chi_{\{u\ne0\}}\label{eq:no-sign-obstacle-problem-Carnot}
\end{equation}
on some domain of a stratified group $\G$, see \prettyref{sec:Setup-and-Notation} for notation and terminology. In the Euclidean setting, the obstacle
problem is among the most studied topics in the field of Free Boundary
Problems, see for instance the monographs by Rodrigues \cite{MR880369},
Friedman \cite{MR1009785}, and Petrosyan et al. \cite{MR2962060}.
It asks which properties can be deduced about a function with given
boundary values and that minimizes the Dirichlet energy, under the
constraint of lying above a given function. This is the classical
obstacle problem, that can be studied through the theory of variational
inequalities, using the Dirichlet energy, see for instance \cite{MR1786735,MR0318650}.
The variational approach, after subtracting the obstacle from the
solution, leads to the following PDE formulation of the problem
\begin{equation}
\begin{cases}
\Delta u=f\chi_{\{u>0\}} & \text{in }B_{1},\\
\phantom{\Delta}u\ge0 & \text{in \ensuremath{B_{1},}}\\
\phantom{\Delta}u=g & \text{on }\partial B_{1},
\end{cases}\label{eq:classical_obstacle}
\end{equation}
where $B_1$ denotes the metric unit ball with respect to the
Carnot--Carath\'eodory distance (Definition~\ref{def:metric ball}).
Our problem is a non-variational counterpart of \eqref{eq:classical_obstacle}, that is
\begin{equation}
\begin{cases}
\Delta u=f\chi_{\{u\ne0\}} & \text{in }B_{1},\\
\phantom{\Delta}u=g & \text{on }\partial B_{1}.
\end{cases}\label{eq:no-sign-obstacle-problem}
\end{equation}
We point out that \eqref{eq:no-sign-obstacle-problem} \textemdash{}
which is called a no-sign obstacle-type problem \textemdash{} naturally
appears also when considering the so-called quadrature domains \cite{MR1097025,MR2129734}.

Two important questions on this problem concern the regularity of
solutions to \eqref{eq:no-sign-obstacle-problem}, and the regularity
of the free boundary. In Euclidean space, the analysis of both questions
is essentially complete \cite{MR1097025,MR1745013,MR2369892,MR2999297}.
In particular, in relation to the regularity of solutions, Andersson
et al. \cite{MR2999297} show that $u$ has the optimal $C^{1,1}$
regularity if the linear problem $\Delta v=f$ has a $C^{1,1}$ solution.
This is the minimal regularity assumption on $f$ in order to establish
the $C^{1,1}$ regularity of solutions.

The main result of this paper is the sharp regularity of solutions
to \eqref{eq:no-sign-obstacle-problem-Carnot} also in the subelliptic
setting of stratified groups.
\begin{thm}
[$C_H^{1,1}$ regularity]\label{thm:main-theorem} Let $u\in  L^{\infty}(B_{1})$
be a distributional solution to \eqref{eq:no-sign-obstacle-problem-Carnot} in the unit ball $B_1$. 
Let $f:B_{1}\to\R$ be locally summable such that $f*\Gamma\in C_{H}^{1,1}(B_{1})$.
Then there exists a universal constant $C>0$ such that, after a modification on a negligible set, we have $u\in C_{H}^{1,1}(B_{1/4})$ and
\begin{equation}
\lVert D_{h}^{2}u\rVert_{L^{\infty}(B_{1/4})}\le C\left(\lVert D_{h}^{2}(f*\Gamma)\rVert_{L^{\infty}(B_{1})}+\lVert u\rVert_{L^{\infty}(B_{1})}\right).\label{eq:C11-estimate}
\end{equation}
\end{thm}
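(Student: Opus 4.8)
The plan is to reduce the no-sign obstacle problem to an estimate at free-boundary points, exploiting the structure of the sub-Laplacian and the scaling furnished by the intrinsic dilations $\delta_r$. First I would normalize: set $w \df u - f*\Gamma$, so that $\Delta_H w = -f\chi_{\{u=0\}}$ in the distributional sense and, more importantly, $\Delta_H w = 0$ on the open set $\{u \ne 0\}$ while $w = -f*\Gamma$ on $\{u=0\}$. Since $f*\Gamma \in C_H^{1,1}(B_1)$ by hypothesis, it suffices to prove $w \in C_H^{1,1}(B_{1/4})$ with the corresponding bound on $\|D_h^2 w\|_\infty$. The reduction splits into two regimes: interior points of $\{u\ne 0\}$, where subelliptic estimates for $\Delta_H$-harmonic functions (Folland's regularity theory, together with the a priori $L^\infty$ bound) give local $C_H^\infty$ control, and points on or near the free boundary $\partial\{u\ne 0\}$, where the obstacle forces only $C_H^{1,1}$ and all the work lies.

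For the free-boundary estimate I would follow the now-standard dichotomy/growth argument of the Euclidean theory (Andersson--Lindgren--Shahgholian), adapted via the dilation structure of $\G$. Fix $x_0 \in \partial\{u\ne 0\} \cap B_{1/4}$ and consider the rescalings $u_{x_0,r}(x) \df r^{-2}\big(u(x_0 \cdot \delta_r x) - \ell_{x_0,r}(x)\big)$, where one subtracts off the appropriate "tangent" correction coming from $f*\Gamma$ so that $\Delta_H u_{x_0,r} = \chi_{\{u\ne 0\}}$-type data with controlled right-hand side. The key is a quadratic growth estimate
\[
\sup_{B_r(x_0)} |u| \le C\, r^2 \big(\|D_h^2(f*\Gamma)\|_{L^\infty(B_1)} + \|u\|_{L^\infty(B_1)}\big)
\]
for all small $r$, from which $C_H^{1,1}$ at $x_0$ follows by the Folland--Stein characterization. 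To get this growth estimate I expect to argue by contradiction and compactness: if it fails there is a sequence of rescaled solutions with $\|u_{x_0,r_j}\|$ blowing up along a subsequence; normalizing and passing to a limit (using interior subelliptic estimates for compactness, and the fact that the $C_H^{1,1}$-seminorm of $f*\Gamma$ controls the inhomogeneity after rescaling) yields an entire solution $u_\infty$ on $\G$ of $\Delta_H u_\infty = \chi_E$ for some set $E$, with at most quadratic growth and vanishing at the origin, which one shows must be a homogeneous polynomial of degree $2$ — contradicting the normalization. Then one transfers the pointwise-at-free-boundary bound to a genuine $C_H^{1,1}$ estimate on $B_{1/4}$ by covering: at points far from the free boundary use harmonicity, at points near it use the growth bound and a telescoping/dyadic argument to control the second horizontal derivatives.

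The main obstacle, and the place where genuine subelliptic input is needed, is the compactness and classification step in the blow-up: in $\R^n$ one uses that bounded sets of harmonic functions are precompact in $C^\infty_{loc}$ and that global harmonic functions of sub-quadratic-plus-$\varepsilon$ growth are polynomials, but in $\G$ one must replace these by (i) the interior a priori estimates for $\Delta_H$ that are uniform under the non-commutative group translation $x_0 \cdot {}$ and dilation $\delta_r$, and (ii) a Liouville-type theorem classifying entire $\Delta_H$-harmonic functions with polynomial growth of homogeneous degree $\le 2$ — which is delicate because the natural "polynomials" here are with respect to the stratified grading, the operator $\Delta_H$ is only hypoelliptic, and one must ensure no spurious growth directions appear along the higher strata. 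Handling the inhomogeneous term $\chi_{\{u\ne 0\}}$ (a merely bounded, not continuous, right-hand side) in these limits requires the hypothesis in its exact form: it is precisely the assumption $f*\Gamma \in C_H^{1,1}$ that lets one absorb $f$ into the "known" part and reduce to a clean free-boundary problem, mirroring the role of the $C^{1,1}$ solvability of $\Delta v = f$ in the Euclidean case.
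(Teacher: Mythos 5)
Your proposal and the paper share the surface philosophy of ``absorb $f$ using $f*\Gamma\in C_H^{1,1}$, rescale around free-boundary points, prove quadratic growth,'' but your core engine is a compactness/contradiction blow-up with a Liouville-type classification, and that engine does not run here. The reason is structural: for the \emph{no-sign} problem one does not know that blow-up limits of $\Delta_H u=\chi_E$ with quadratic growth are homogeneous quadratic polynomials --- this classification is unavailable (even in the Euclidean case it was the obstruction that Andersson--Lindgren--Shahgholian deliberately avoided). There is no monotonicity formula (Weiss/ACF-type) for the no-sign problem that would identify blow-ups or force the coincidence set $E$ in the limit to be trivial, and without sign information on $u$ one cannot rule out pathological limit sets $E$. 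Consequently the step ``normalize, pass to a limit, and show $u_\infty$ is a degree-two polynomial, contradicting the normalization'' is a genuine gap: it assumes exactly the kind of rigidity that is not known.

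Relatedly, your proposed growth estimate
\[
\sup_{B_r(x_0)}|u|\le C\,r^2\big(\|D_h^2(f*\Gamma)\|_{L^\infty}+\|u\|_{L^\infty}\big)
\]
(at free-boundary points, subtracting only a linear correction $\ell_{x_0,r}$) is essentially equivalent to the conclusion, not a stepping stone to it. The correct intermediate estimate must subtract a full second-order polynomial: the paper shows $\sup_{B_{\sigma r}(x_0)}\lvert u(y)-\ell^{x_0}(x_0^{-1}y)-p_r^{x_0}(x_0^{-1}y)\rvert\le Cr^2$, where $p_r^{x_0}$ is built from the average horizontal Hessian $P_r^{x_0}$, and crucially $|P_r^{x_0}|$ is \emph{not} yet known to be bounded. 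The whole difficulty is then a dichotomy: either $\liminf_k|P_{2^{-k}}^{x_0}|$ is controlled (and one concludes directly), or it grows, and one shows via a decay estimate on the measure of the coincidence set $\Lambda_r(x_0)$ (Proposition~\ref{prop:decay-of-coincidence-set}) that the problem linearizes, running Caffarelli's polynomial iteration to get the bound anyway. This quantitative route relies on the $BMO$-to-$BMO$ estimates of Bramanti et al.\ for $X_iX_ju$ (Theorem~\ref{thm:psigmaBMO}), which is the piece of subelliptic input you replaced with compactness; it is exactly what makes the argument work without classifying blow-ups. Your reduction into ``interior of $\{u\ne0\}$'' versus ``near the free boundary'' also does not cleanly partition the problem: $\{u=0\}$ can have positive measure, and interior subelliptic estimates on $\{u\ne0\}$ degenerate as one approaches it, so one must treat arbitrary Lebesgue points of $D_h^2u$, not just free-boundary points.

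In short: the normalization $w=u-f*\Gamma$ and the scaling setup are fine, but you need to replace the compactness/Liouville step by (i) the $BMO$ control of the averaged Hessian matrices $P_r^{x_0}$, (ii) the subquadratic growth estimate with the second-order polynomial subtraction, (iii) the quantitative decay of $|\Lambda_r(x_0)|$ when $|P_r^{x_0}|$ is large, and (iv) the dichotomy plus Caffarelli-type polynomial iteration.
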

In our setting, the natural counterpart of the Euclidean $C^{1,1}$ regularity is the
$C^{1,1}_H$ regularity, where the horizontal derivatives are required to be Lipschitz 
continuous (Definition~\ref{def:C1alpha}).
The function $\Gamma$ denotes the fundamental solution of $\Delta_H$ (Definition~\ref{def:FundSol}). For further notation and terminology, we address the reader to \prettyref{sec:Setup-and-Notation}.

We wish to emphasize that $u$ satisfies \eqref{eq:no-sign-obstacle-problem-Carnot} also in the strong sense. Indeed, the distributional equality $\Delta_H(f*\Gamma)=-f$ joined with the assumed $C^{1,1}_H$ regularity of $f*\Gamma$ show that $f\in L^\infty(B_1)$.
Therefore $f\chi_{\{u\ne0\}}\in L^\infty(B_1)$ and by the regularity result of Folland, \cite[Theorem~6.1]{MR0494315}, we get $u\in W^{2,p}_{H,\text{loc}}(B_1)$ 
for every $1\le p<\infty$.
The $C_{H}^{1,1}$ regularity of solutions to the obstacle problem
in stratified groups was obtained by Danielli et al. \cite{MR1976081},
using the variational formulation of the problem. The regularity of
the free boundary was subsequently established in step two groups
\cite{MR2323535}. Further results in this area have been obtained
for Kolmogorov operators and parabolic non-divergence form operators
of H\"ormander type \cite{MR2434049,MR2658144,MR2891355,MR3093377}.
The no-sign obstacle-type problem in terms of the equation \eqref{eq:no-sign-obstacle-problem-Carnot}
does not seem to have been considered before in the subelliptic setting. 

Our arguments are remarkably different from the ones used for the
obstacle problem. For instance, in this problem without a forcing term
the solution is automatically superharmonic with respect to $\Delta_{H}$,
while in our setting we have no such sign condition that would yield 
a superharmonic solution. We initiate our
analysis observing that second order horizontal derivatives of solutions
to \eqref{eq:no-sign-obstacle-problem-Carnot} satisfy certain $BMO$
estimates, that have been established by Bramanti et al. \cite{MR2174915,MR3100057}.
The subsequent step is to construct suitable approximating polynomials,
starting from the second order horizontal derivatives of the solution.
Indeed these polynomials yield a subquadratic growth estimate \eqref{eq:quadratic-growth-1}
at small scales. We point out that this estimate is valid for any
bounded and $W^{2,p}$ regular function, with bounded sub-Laplacian, so it might be of
independent interest. As a consequence, we perform a suitable rescaling
of the equation and then infer the crucial decay estimate of the measure of the coincidence
set (Proposition~\ref{prop:decay-of-coincidence-set}), when the
horizontal Hessian of the approximating polynomial is sufficiently
large. More details on this procedure can be found at the beginning
of Section~\ref{sec:C11}.

Although our ideas mainly follow the path set up by Andersson et al.
\cite{MR2999297}, and Figalli and Shahgholian \cite{figsha}, there
are several difficulties related to the subelliptic setting. The basic
one is concerned with the fact that the sub-Laplacian $\Delta_{H}$
is degenerate elliptic. In addition, since the operator $\Delta_{H}$
is written in terms of H\"omander vector fields, we can only consider
the \emph{horizontal Hessian} \eqref{eq:grad_hess} of the solution,
that is a nonsymmetric matrix. Then the construction of the approximating
polynomials starting from the average of the second order noncommuting
derivatives $X_{i}X_{j}u$ becomes more delicate and requires some
preliminary algebraic work, see Section \ref{sec:Background-Preliminaries}.
Notwithstanding the technical complications, the proof has become
more streamlined: we can stay clear of the projection operator used
in \cite{MR2999297}, and this simplifies several technical points.
A suitable quantitative decay estimate of the zero-level set \eqref{eq:Decayr}
can be obtained also in our setting. Finally, we adapt Caffarelli's polynomial 
iteration technique of \cite{MR1005611} to
find explicit estimates of the second order horizontal derivatives,
see \eqref{eq:C11-estimate}.

We finish the introduction by giving an overview of the paper. In Section 2 we introduce some basic notions on stratified groups and the related function spaces.
In Section~3 we construct suitable second order homogeneous polynomials (Definition~\ref{def:c_ijrx_0}), that have an assigned horizontal Hessian (Corollary~\ref{c:2ndorderpolyn}). Then some important $W^{2,p}$ and
$BMO$ estimates are presented. Finally, we provide the crucial scaling estimates of Lemma~\ref{lem:dyadic-scale}. In Section~4 we prove a subquadratic growth estimate of the difference between a solution and its approximating polynomial. Then we apply the subquadratic growth estimate to get a suitable decay of the measure
of the zero-level set. Finally, we establish the $C_{H}^{1,1}$ regularity
in quantitative terms, according to \eqref{eq:C11-estimate}. 

\vskip3mm\noindent
{\bf Acknowledgements.} 
A substantial part of this work was accomplished during 
the time A.M. spent at Scuola Normale Superiore in Pisa, with
the support of the Knut and Alice Wallenberg Foundation.
V.M. gratefully thanks Henrik Shahgholian, along
with KTH Royal Institute of Technology, for support and hospitality 
during December 2018, where important parts of the present work were 
carried out. He also acknowledges the Institutional Research Grant from
the University of Pisa. 
The authors wish to thank Marco Bramanti
for fruitful discussions concerning some a priori $BMO$ estimates.
They are also indebted to the referee for several useful remarks.

\section{Basic facts and notation \label{sec:Setup-and-Notation}}

A \emph{stratified group} is a simply connected, real nilpotent Lie
group $\mathbb{G}$, whose Lie algebra $\mathcal{G}$ has a special
stratification. We denote by $V_{i}$ the subspaces of $\mathcal{G}$,
having the properties:
\[
\mathcal{G}=V_{1}\oplus V_{2}\oplus\cdots\oplus V_{\iota}\quad\text{and}\quad[V_{1},V_{j}]=V_{j+1}
\]
for $j=1,\ldots,\iota$ and $V_{\iota+1}={0}.$ Let us denote by $n$ the topological dimension of $\G$ and by $m$ the dimension of $V_1$. We choose a \emph{graded basis
$X_{1},X_{2},\ldots,X_{n}$ }of $\mathcal{G}$, that is characterized
by the property that
\[
X_{m_{j-1}+1},\ldots,X_{m_{j}}
\]
is a basis of $V_{j}$ for all $j=1,\ldots,\iota$, where we have
set $m_{0}=0$, $m_{1}=m$ and $m_{j}=\sum_{i=1}^{j}\dim V_{i}$.
We notice that $m_{\iota}=n$ and with these definitions, if $m_{k-1}<j\le m_{k}$,
then $k\in\N$ is uniquely determined and we define the positive integer
\begin{equation}\label{eq:degree}
d_{j}\df k.
\end{equation}
Through the exponential mapping of $\G$, one can construct a diffeomorphism
from $\R^{n}$ to $\G$. Hence we have defined a \emph{graded basis
}$e_{1},e_{2},\ldots,e_{n}$ of $\R^{n}$ and \emph{graded coordinates}
$x_{1},x_{2},\ldots,x_{n}$ that define the point $x=(x_{1},x_{2},\ldots,x_{n})$
of $\G$. This allows us to identify $\G$ with $\R^{n}$, as it will
be understood in the sequel. 

In addition, one may also verify that the Lebesgue measure of $\R^n$ through the graded coordinates yields the Haar measure of the group $\G$.  The notation $|A|$ denotes the Lebesgue measure of a measurable set $A\subset\R^n$.

The diffeomorphism associated to graded coordinates has also the property 
that the group operation on $\G$, when read in $\R^{n}$,
is given by a special polynomial group operation 
\begin{equation}
xy=x+y+BCH(x,y),\label{eq:BCH}
\end{equation}
where the precise form of the vector polynomial $BCH:\R^{n}\times\R^{n}\to\R^{n}$
is given by the important \emph{Baker\textendash Campbell\textendash Hausdorff
formula}, in short BCH formula, see for instance \cite{MR746308}. The $degree$ of $x_{j}$
is the integer $d_{j}$ defined in \eqref{eq:degree} and we define
\emph{intrinsic dilations} as follows
\[
\delta_{r}x=(rx_{1},\ldots,rx_{m},r^{2}x_{m+1},\ldots,r^{2}x_{m_{2}},\ldots,r^{\iota}x_{m_{\iota-1}+1},\ldots,r^{\iota}x_{n})=\sum_{j=1}^{n}r^{d_{j}}x_{j}e_{j}
\]
for any $r>0$. The notion of degree fits the algebraic properties of dilations, since  
\begin{equation}
\delta_{r}(xy)=(\delta_{r}x)(\delta_{r}y)\label{eq:delta_r}
\end{equation}
for all $x,y\in\R^{n}$. 
By the form of dilations, for every measurable set $A\subset \R^n$ we have
\[
|\delta_r(A)|=r^Q|A|
\]
for all $r>0$, where $Q=\sum_{j=1}^n d_j$ can be proved to be the Hausdorff dimension of $\G$.

The metric structure of $\R^{n}$ is given by a control distance.
We say that $\gamma:[0,T]\to\R^{n}$, an absolutely continuous curve,
is \textit{admissible }if for a.e. $t\in[0,1]$ there holds 
\[
\dot{\gamma}(t)=\sum_{i=1}^{m}b_{i}(t)X_{i}(\gamma(t))
\]
and $\sum_{i=1}^{m}b_{i}(t)^{2}\le1.$ We denote by $\mathcal{H}(x,y)$
the family of all admissible curves whose image contains ${x,y}$.
By Chow's theorem, $\mathcal{H}(x,y)$ is nonempty for every
$x,y\in\R^{n}$, hence the following ``control distance'' 
\[
d(x,y)=\inf\left\{ T>0\Big|\gamma:[0,T]\to\R^{n},\ \gamma\in\mathcal{H}(x,y)\right\}
\]
is well defined. It is also possible to check that $d$ is actually a distance, corresponding to the well known 
\textit{Carnot\textendash Carathéodory distance}.

Since left translations preserve the ``horizontal velocity'', $d$ is also {\em left invariant}, namely $d(x,y)=d(zx,zy)$ for all $x,y,z\in\R^{n}$.
Furthermore, dilations are Lie group homomorphisms, hence the Carnot--Carathéodory distance is homogeneous in the sense that $d(\delta_{r}x,\delta_{r}y)=rd(x,y)$
for every $x,y\in\R^{n}$ and $r>0.$ 

\begin{defn}[Metric balls]\label{def:metric ball}
For $x\in\R^n$ and $r>0$, we denote by $B_{r}(x)$ the open ball of center $x$ and radius $r>0$ with respect to $d$. Precisely, this is the set $\{y\in\R^n:d(x,y)<r\}$. When $x=0$, we use the notation $B_{r}\df B_{r}(0)$. 
\end{defn}
From the properties of $d$ and $\delta_{r}$, 
it is easy to observe that 
\[
B_{r}(x)= x\dil r(B_{1}).
\]
Dilations also allow us to introduce a natural notion
of homogeneity, so we may say that a polynomial $p:\R^{n}\to\mathbb{R}$
is \emph{$k$-homogeneous} if
\[
p(\delta_{r}x)=r^{k}p(x)\qquad\text{for all \ensuremath{x\in\R^{n}} and \ensuremath{r>0}.}
\]
The number $k\in\N$ is the \emph{degree} of $p$. 
Moreover, any vector field $X_{j}$ of the fixed graded basis can be identified
with a first order differential operator of the form 
\begin{equation}
X_{j}=\partial_{x_{j}}+\sum_{i=m_{d_{j}}+1}^{n}a_{ji}\partial_{x_{i}}\label{eq:vectfields}
\end{equation}
for every $j=1,\ldots,n$. The functions $a_{ji}:\R^{n}\to\R$ are
homogeneous polynomials of degree $d_{i}-d_{j}\ge1$ and in particular
$X_{j}(0)=e_{j}$ for all $j=1,\ldots,n$.
In the sequel $\Omega$ will be understood as an open bounded subset of $\G$, that can be also identified with an open subset 
of $\R^{n}$, if not otherwise stated. 

Given a function $u:\Omega\to\R$ and considering the vector fields
$X_{j}$ as differential operators, we may introduce the \textit{horizontal
gradient} and the \textit{horizontal Hessian}
\begin{equation}
\nabla_{h}u=(X_{1}u,\ldots,X_{m}u)\quad\text{and}\quad D_{h}^{2}u=\left(\begin{array}{cccc}
X_{1}X_{1}u & X_{1}X_{2}u & \cdots & X_{1}X_{m}u\\
X_{2}X_{1}u & X_{2}X_{2}u & \cdots & X_{2}X_{m}u\\
\vdots & \vdots & \ddots & \vdots\\
X_{m}X_{1}u & \cdots & \cdots & X_{m}X_{m}u
\end{array}\right),\label{eq:grad_hess}
\end{equation}
respectively, whenever they are pointwise defined.
More generally, we can define higher order differential operators considering
for $I=(i_1,\ldots,i_n)\in\N^n$ the function
\[
X^Iu\df X_1^{i_n}\cdots X_n^{i_1}u.
\]
\begin{defn}[Folland--Stein spaces]
\label{def:C1alpha}
Let $\Omega\subset \R^n$ be an open set. 
We denote by $C^1_H(\Omega)$ the space of all functions $u:\Omega\to\R$ such that the horizontal derivatives $X_ju$ exist on $\Omega$ for all $j=1,\ldots,m$ and are continuous.
If $0<\alpha\le 1$, then $C^{1,\alpha}_H(\Omega)$ is the space of functions $u$ in $C^1_H(\Omega)$
such that there exists $C>0$ with the property that 
\[
|X_jf(x)-X_jf(y)|\le C\,d(x,y)^\alpha
\]
for every $x,y\in\Omega$ and $j=1,\ldots,m$.
\end{defn}
Notice that $D_{h}^{2}u$ is not symmetric, since the vector fields $X_{j}$ do not commute in general. We say that $X_{j}u$ are the \emph{horizontal
derivatives }and $X_{i}X_{j}u$ are the \emph{second order horizontal
derivatives.} The {\em symmetrized horizontal Hessian} is defined as 
\[
D_{h}^{2,s}u=\frac{1}{2}\left(D_{h}^{2}u+D_{h}^{2}u^{T}\right).
\]
The sub-Laplacian is defined as
\[
\Delta_{H}u=\sum_{j=1}^{m}X_{j}^{2}u.
\]
Functions satisfying $\Delta_{H}u=0$ are called as usual \emph{harmonic
functions}. 
\begin{defn}[Fundamental solution]\label{def:FundSol}
We say that $\Gamma\in C^\infty(\G\sm\{0\}) $ is a 
{\em fundamental solution} for $\Delta_H$ if 
it is locally summable, it vanishes at infinity and satisfies
$\Delta_H\Gamma=-\delta_0$, where $\delta_0$ denotes
the Dirac distribution centered at the origin.
\end{defn}

The fundamental solution $\Gamma$ defines a gauge $d_{G}=\Gamma^{1/(2-Q)}$, that is 1-homogeneous with respect to dilations and continuous on $\R^n$.
We can readily check that there exists a constant $c_0>1$ such that 
\begin{equation}\label{eq:C_0}
c_0^{-1}d_G(x)\le d(x,0)\le c_0 d_G(x)
\end{equation}
for all $x\in\R^n$. Defining $d_G(x,y)\df d_G(x^{-1}y)$  we also introduce the {\em gauge ball}
\begin{equation}\label{eq:BGrx}
B_{r}^{G}(x)=\left\{ y\in\R^{n}:d_{G}(x,y)<r\right\}.
\end{equation}
The previous estimates clearly imply that 
\begin{equation}
B_{r}^{G}(x)\subset B_{c_{0}r}(x)\label{eq:defc_0}
\end{equation}
for every $r>0$ and $x\in\R^{n}$.

\begin{prop}\label{prop:derivEst}
Let $\Omega\subset\R^n$ be an open set and let $\vartheta$ be harmonic in $\Omega$.
We consider an open set $\Omega'\subset\Omega$ and $h>0$ such that
\[
\dist_G(\Omega',\Omega^c)\df\inf\left\{d_G(x,y):x\in\Omega',\,y\in\Omega^c\right\}> h. 
\]
Then $\vartheta\in C^\infty(\Omega)$ and for every multiindex $I$ there exists a constant $C_{I,h}>0$ such that
\begin{equation}\label{eq:Xalpha}
|X^I\vartheta(x)|\le C_{I,h} \|\vartheta\|_{L^1(\Omega)}
\end{equation}
\end{prop}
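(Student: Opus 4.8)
The plan is the classical cut-off representation via the fundamental solution. First I would dispose of smoothness: since $X_1,\dots,X_m$ generate $\mathcal G$ by the stratification hypothesis, $\Delta_H=\sum_{j=1}^m X_j^2$ satisfies H\"ormander's bracket condition and is therefore hypoelliptic \cite{MR0494315}, so every distributional solution of $\Delta_H\vartheta=0$ is $C^\infty(\Omega)$ and I may work classically. Then I would fix $x_0\in\Omega'$; since $\dist_G(\Omega',\Omega^c)>h$ forces $B^G_h(x_0)\subset\Omega$, and since $\Delta_H$ and $d_G$ are left invariant, it suffices to bound $X^I\vartheta(x_0)$ and I may assume $x_0=0$, so $B^G_h\subset\Omega$. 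Next, using \eqref{eq:C_0}, I would fix radii $0<r_1<r_2<r_3<h$ (with $r_1$ small enough compared to $r_2$) so that the compact sets $\overline{B^G_{r_1}}$ and $\overline{A}$, where $A\df B^G_{r_3}\sm B^G_{r_2}$, lie at Carnot--Carath\'eodory distance $\ge\sigma>0$, all quantities comparable to $h$; and a cut-off $\eta\in C_c^\infty(B^G_{r_3})$ with $\eta\equiv1$ on $B^G_{r_2}$ and $\|X^J\eta\|_{L^\infty}\le C_{J,h}$ for every $J$, so that $\nabla_h\eta$ and $\Delta_H\eta$ are supported in $A$. Setting $w\df\eta\vartheta\in C_c^\infty(\Omega)$, the defining relation $\Delta_H\Gamma=-\delta_0$ together with the Liouville property of entire harmonic functions vanishing at infinity gives the representation $w(x)=-\int_{\R^n}\Gamma(y^{-1}x)\,\Delta_H w(y)\,dy$; since $\Delta_H\vartheta=0$ one has $\Delta_H w=2\langle\nabla_h\eta,\nabla_h\vartheta\rangle+\vartheta\,\Delta_H\eta$, which is supported in $A$, while $w=\vartheta$ on $B^G_{r_2}$. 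Hence for $x\in B^G_{r_1}$
\[
\vartheta(x)=-\int_A\Gamma(y^{-1}x)\Big(2\langle\nabla_h\eta,\nabla_h\vartheta\rangle(y)+\vartheta(y)\,\Delta_H\eta(y)\Big)\,dy .
\]

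Next I would move all derivatives off $\vartheta$. Since by \eqref{eq:vectfields} the polynomial $a_{ji}$ is independent of $x_i$, each $X_j$ is divergence free for Lebesgue (hence Haar) measure, so one integration by parts in the term carrying $\nabla_h\vartheta$ — licit because for $x\in B^G_{r_1}$ the map $y\mapsto\Gamma(y^{-1}x)$ is smooth in a neighbourhood of $\overline A$ — produces
\[
\vartheta(x)=\int_A\vartheta(y)\,\Psi(x,y)\,dy ,\qquad x\in B^G_{r_1},
\]
with $\Psi$ a finite linear combination of products of derivatives of $\eta$ (hence supported in $A$) with $\Gamma(y^{-1}x)$ and with $X_j^y\big(\Gamma(y^{-1}x)\big)$. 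In particular $\Psi$, and hence $X^I_x\Psi$ for every multiindex $I$, is $C^\infty$ on $\{x\ne y\}$, and on $\overline{B^G_{r_1}}\times\overline A$ — which stays at distance $\ge\sigma$ from the diagonal — it is bounded by a constant $C_{I,h}$ independent of $x_0$ by left-invariance of the $X_j$. Differentiating under the integral sign (the integrand is smooth in $x$ uniformly for $y\in\overline A$, and $\vartheta\in L^1(A)$) and evaluating at $x=x_0$ then yields
\[
|X^I\vartheta(x_0)|\le C_{I,h}\int_A|\vartheta|\le C_{I,h}\,\|\vartheta\|_{L^1(\Omega)},
\]
which is \eqref{eq:Xalpha}; since $x_0\in\Omega'$ was arbitrary, this finishes the argument.

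The main obstacle I anticipate is the bookkeeping in that last step: one must verify that applying a left-invariant vector field in the $y$-variable, and then an arbitrary $X^I$ in the $x$-variable, to the composition $(x,y)\mapsto\Gamma(y^{-1}x)$ still yields a function that is $C^\infty$ off the diagonal $\{x=y\}$, with bounds on $\overline{B^G_{r_1}}\times\overline A$ that do not depend on the base point $x_0$. This will follow from the facts that $(x,y)\mapsto\Gamma(y^{-1}x)$ is $C^\infty$ on $\{y^{-1}x\ne0\}$ (since $\Gamma\in C^\infty(\G\sm\{0\})$ and the group law is polynomial), that the $X_j$ are first order operators with polynomial coefficients, and that $\overline{B^G_{r_1}}\times\overline A$ is compact and separated from the diagonal by a bound $\sigma$ that is translation invariant. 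Everything else — the representation formula for compactly supported smooth functions, the integration by parts, and the passage of the derivative under the integral — is routine.
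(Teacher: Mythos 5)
Your argument is correct, but it takes a genuinely different route from the paper's. The paper invokes the smooth mean-value kernel of Bonfiglioli--Lanconelli--Uguzzoni (the function $\phi$ of \cite[(5.50e)]{MR2363343}): for a harmonic $\vartheta$ one has directly $\vartheta(x)=\int_\Omega \hat\phi_h(y^{-1}x)\vartheta(y)\,dy$ with $\hat\phi_h$ smooth, bounded together with all derivatives, and compactly supported in $B^G_h$, so the estimate follows by a single differentiation under the integral sign, with no cut-off and no integration by parts. You instead build the representation by hand: a cut-off $\eta$, the Green-type identity $w=-(\Delta_Hw)*\Gamma$ for $w=\eta\vartheta$ (justified via hypoellipticity and the Liouville property), the product-rule expansion that removes the $\eta\,\Delta_H\vartheta$ term, and one integration by parts --- licit because each $X_j$ is divergence-free for Haar measure, as you correctly note from \eqref{eq:vectfields}, and because the evaluation ball $B^G_{r_1}$ is separated from the source annulus $A$ so the kernel $\Gamma(y^{-1}x)$ is smooth there --- to move $X_j$ off $\vartheta$. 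The paper's proof buys brevity at the cost of citing a nontrivial auxiliary kernel whose construction (itself via cut-offs of $\Gamma$) is hidden in the reference; yours is more self-contained and makes the separation-from-the-singularity mechanism visible. Both arguments handle uniformity in $x_0$ the same way, reducing to left-invariance of $\Delta_H$, $d_G$ and Haar measure via a preliminary translation, and both correctly pass the derivatives under the integral by the uniform smoothness of the (translated) kernel on the relevant compact set.
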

\begin{proof}
We consider the function $\phi$ defined in \cite[(5.50e)]{MR2363343}, where we choose 
$\ph$ appearing in the definition of $\phi$, such that $\ph\in C_c^\infty(]3^{-1},1[)$,
$\ph\ge0$ and $\int_\R\ph(t)dt=1$. It follows that $\phi$ is smooth and bounded on $\R^n$, 
along with all of its derivatives, and it is compactly supported in $B^G_1$, 
see the definition \eqref{eq:BGrx}. We also consider $\phi_r(z)\df r^{-Q}\phi(\delta_{1/r}z)$,
that is compactly supported on $B^G_r$. We finally set $\hat\phi_r(z)\df\phi_r(z^{-1})$ for all $z\in\R^n$ and $r>0$.
Thus, using \cite[(5.50a),(5.50d)]{MR2363343}, for every $x\in B_\lambda$, we get
\[
\vartheta(x)=\int_{B^G_{h}(x)} \phi_{h}(x^{-1}y)\vartheta(y)dy=
\int_\Omega \hat\phi_{h}(y^{-1}x)\vartheta(y)dy.
\]
We can differentiate the last integral an arbitrary number of times, due to the smoothness of $\hat\phi_h$, getting the smoothness of $\vartheta$ and the following estimate
\begin{equation}
|X^I\vartheta(x)|=\left|\int_{\Omega} X^I\hat\phi_{h}(y^{-1}x)\vartheta(y)dy\right|
\le 
\|X^I\hat\phi_h\|_{L^\infty(\R^n)}\|\vartheta\|_{L^1(\Omega)}
\end{equation}
for all $x\in\Omega'$. This concludes the proof.
\end{proof}

In our setting, we need the notion of Sobolev function adapted to
the horizontal vector fields $X_{1},\ldots,X_{m}$, see \cite{MR0494315}.
The horizontal Sobolev space $W_{H}^{k,p}(\Omega)$ consists of those functions
$u\in L^{p}(\Omega)$ for which, for all $j_{s}\in\{1,\ldots,m\}$
and $s\in\{1,\ldots,k\}$, there exists a function $v_{j_{1},\ldots,j_{k}}\in L^{p}(\Omega)$
such that
\[
\int_{\Omega}u(y)(X_{j_{1}}\cdots X_{j_{k}}\phi)(y)\,dy=(-1)^{k}\int_{\Omega}v_{j_{1},\ldots,j_{k}}(y)\phi(y)\,dy
\]
for any function $\phi\in C_{c}^{\infty}(\Omega)$.
Also in the more general setting of H\"ormander vector fields some 
Sobolev embedding theorems hold, see \cite[Theorem 1.11 and (3.19)]{MR1404326}, or
\cite[Theorem 1.1]{MR1425620}. The next theorem specializes these
embedding results for stratified groups. 
\begin{thm}\label{t:embed}
Let $p>Q$, where $Q$ is the Hausdorff dimension of $\G$
and let $\Omega'\Subset\Omega$ be any open and relatively compact subset.
Then there exists $C>0$, depending on $\Omega'$, such that for every $u\in W_{H}^{1,p}(\Omega)$,
up to a modification of $u$ on a negligible set, we have
\[
|u(x)-u(y)|\le C\,\|u\|_{W_{H}^{1,p}(\Omega)}\,d(x,y)^{1-\frac{Q}{p}},
\]
for every $x,y\in\Omega'$.
\end{thm}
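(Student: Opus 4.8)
The statement is the Morrey--Sobolev embedding $W^{1,p}_H \hookrightarrow C^{0,1-Q/p}$ for $p > Q$, adapted to the stratified-group setting, and the natural strategy is to reduce it to the general H\"ormander-vector-field results cited just before the theorem (namely \cite{MR1404326} and \cite{MR1425620}), then check that the constant and exponent take the claimed form using the homogeneous structure of $\G$. First I would fix $\Omega' \Subset \Omega$ and an intermediate open set $\Omega''$ with $\Omega' \Subset \Omega'' \Subset \Omega$; the embedding theorems of \cite{MR1404326,MR1425620} give, for $u \in W^{1,p}_H(\Omega)$ restricted to $\Omega''$, a H\"older bound of the form $|u(x) - u(y)| \le C\,\|u\|_{W^{1,p}_H(\Omega'')}\, \rho(x,y)^{1-Q/p}$ on $\Omega'$, where $\rho$ is the Carnot--Carath\'eodory distance associated to the vector fields $X_1,\dots,X_m$. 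Since in a stratified group these are exactly the horizontal vector fields of the fixed graded basis, $\rho$ coincides with the distance $d$ of \prettyref{sec:Setup-and-Notation}, and $\|u\|_{W^{1,p}_H(\Omega'')} \le \|u\|_{W^{1,p}_H(\Omega)}$, so the stated inequality follows with $C = C(\Omega')$.

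The one point that requires care is the exponent $1 - Q/p$: the abstract results are usually phrased in terms of the local homogeneous dimension, so I would verify that for a stratified group this local dimension is globally equal to $Q = \sum_{j=1}^n d_j$, the Hausdorff dimension introduced in \prettyref{sec:Setup-and-Notation}. This is immediate from the scaling identity $|\delta_r(A)| = r^Q|A|$, which forces the Lebesgue--doubling constant of the metric balls $B_r(x) = x\,\delta_r(B_1)$ to be $2^Q$ uniformly in $x$ and $r$ — this is precisely the ( dimensional) input the embedding theorems need, and it pins down the H\"older exponent as $1 - Q/p$ rather than something depending on the point or on $\Omega''$.

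Alternatively, and perhaps more in the spirit of the homogeneous structure, I would give a direct proof via a representation formula: for $u \in C^\infty$ and $x \in \Omega''$ one has a pointwise bound
\[
|u(x) - u_{B_r(x)}| \le C \int_{B_r(x)} \frac{|\nabla_h u(y)|}{d(x,y)^{Q-1}}\, dy,
\]
valid on spaces of homogeneous type once one knows a $(1,1)$-Poincar\'e inequality for the horizontal gradient (available for H\"ormander vector fields, e.g. via the results underlying \cite{MR1404326}); then H\"older's inequality with exponent $p > Q$ turns the right-hand side into $C\, r^{1-Q/p}\,\|\nabla_h u\|_{L^p}$, and a standard telescoping over dyadic balls upgrades the oscillation estimate to the pointwise H\"older bound $|u(x)-u(y)| \le C\, d(x,y)^{1-Q/p}\,\|\nabla_h u\|_{L^p(\Omega'')}$, after which one passes from smooth functions to general $W^{1,p}_H$ functions by the density of smooth functions in $W^{1,p}_H$ on a slightly larger domain (Friedrichs-type mollification adapted to the group, or the Meyers--Serrin theorem for these vector fields).

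The main obstacle — really a bookkeeping obstacle rather than a deep one — is matching conventions between the cited theorems and the present normalization: ensuring the distance in the target H\"older seminorm is the Carnot--Carath\'eodory $d$ (not the gauge $d_G$, which by \eqref{eq:C_0} is comparable anyway, so it would only change the constant), confirming the homogeneous dimension is exactly $Q$, and controlling the dependence of $C$ on $\Omega'$ alone (which follows because all geometric constants — doubling, Poincar\'e — are uniform in $\G$ by left-invariance and dilation-invariance, so only the relative size $\dist(\Omega', \Omega^c)$ enters). I would therefore state the proof as a reduction: invoke \cite[Theorem~1.11 and (3.19)]{MR1404326} or \cite[Theorem~1.1]{MR1425620}, observe that for the graded vector fields the associated metric is $d$ and the homogeneous dimension is $Q$, and conclude.
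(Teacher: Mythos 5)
The paper gives no explicit proof of this theorem: it simply states that the result ``specializes'' the subelliptic embedding theorems of \cite{MR1404326} and \cite{MR1425620} to stratified groups, which is exactly the reduction you carry out in your first two paragraphs, including the correct identifications of the Carnot--Carath\'eodory distance and the homogeneous dimension $Q$. Your proposal is correct and takes the same route as the paper; the alternative direct Morrey-type argument you sketch in the third paragraph is a sound self-contained supplement but not needed, since the cited theorems already supply the bound.
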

The following (1,1)-Poincar\'e inequality holds,
\begin{equation}
\int_{B_{r}(x)}|u(y)-u_{B_{r}(x)}|dy\le cr\int_{B_{r}(x)}|\nabla_{h}u(y)|dy\label{eq:Poincare}
\end{equation}
for every $u\in C^{1}(\overline{B_{r}(x)}).$ This inequality follows
from \cite{MR850547}, see also \cite{MR1785405}.

For any measurable function
$u$ that is summable on a measurable set $A\subset\Omega$, we use
the notation 
\[
u_{A}\df\fint_{A}u(y)\,dy=\frac{1}{\lvert A\rvert}\int_{A}u(y)\,dy.
\]
\begin{defn}
\label{def:BMOnorms} For $u\in L^{1}(\Omega)$, we define
the $BMO$ seminorms 
\begin{align*}
[u]_{BMO(\Omega)} & \df\sup_{x_{0}\in\Omega,r>0}\fint_{B_{r}(x_{0})\cap\Omega}|u(y)-u_{B_{r}(x_{0})}|\,dy,\\{}
[u]_{BMO_{\text{loc}}(\Omega)} & \df\sup_{B_{r}(x_{0})\subset\Omega}\fint_{B_{r}(x_{0})}|u(y)-u_{B_{r}(x_{0})}|\,dy
\end{align*}
and for $1\le p<\infty$ the corresponding $BMO^p$ norms 
\begin{align*}
\lVert u\rVert_{BMO^{p}(\Omega)} & \df[u]_{BMO(\Omega)}+\lVert u\rVert_{L^{p}(\Omega)},\\
\lVert u\rVert_{BMO_{\text{loc}}^{p}(\Omega)} & \df[u]_{BMO_{\text{loc}}(\Omega)}+\lVert u\rVert_{L^{p}(\Omega)}.
\end{align*}
The spaces $BMO^p(\Omega)$ and $BMO_{\text{loc}}^p(\Omega)$ consist of all $L^p$ functions on $\Omega$ with finite $BMO^p$ and $BMO_{\text{loc}}^p$ norm, respectively. 
See \cite{MR3100057} for more information on $BMO$ functions in the
subelliptic setting. 
\end{defn}

\section{Preparatory results \label{sec:Background-Preliminaries}}

We first study the relationship between the coefficients
of a 2-homogeneous polynomial and its second order horizontal derivatives.
Then, by some $W_{H}^{2,p}$ and $BMO$ estimates, we show how to 
control the horizontal Hessian of a Sobolev
function by the horizontal Hessian of a suitable 2-homogeneous 
harmonic polynomial (Corollary~\ref{cor:BMOest}). Finally,
in Lemma~\ref{lem:dyadic-scale} we establish a quantitative control 
on the growth of these polynomials at small scales.

We need first to find 2-homogeneous polynomials with assigned second
order horizontal derivatives. To do this, we first observe that 
\eqref{eq:BCH}, combined with \eqref{eq:delta_r} and \eqref{eq:degree}, setting 
\[
BCH(x,y)=\sum_{j=m+1}^{n}q_{j}(x,y)e_{j},
\]
imply that any $q_{j}$ is a homogeneous polynomial of degree $d_{j}$.
Due to the BCH formula, one can also prove that $q_{l}$ is a 2-homogeneous polynomial
with respect to the variables $x_{1},\ldots,x_{m},y_{1},\ldots,y_{m}$
for all $l=m+1,\ldots,m_{2}$ and 
\[
q_{l}(x,y)=-q_{l}(y,x).
\]
From the definition of left invariant vector field, we get 
\[
a_{jl}(x)=\frac{\der q_{l}}{\der y_{j}}(x,0),
\]
for all $j=1,\ldots,m$ and $l=m+1,\ldots,m_{2}$. As a consequence,
we get 
\begin{equation}
\frac{\der a_{jl}}{\der x_{i}}=\frac{\der^{2}q_{l}}{\der x_{i}\der y_{j}}=-\frac{\der^{2}q_{l}}{\der x_{j}\der y_{i}}=-\frac{\der a_{il}}{\der x_{j}}\label{eq:antisymmcoeff}
\end{equation}
for all $i,j=1,\ldots,m$ and $l=m+1,\ldots,m_{2}$. 
Notice that the partial derivatives in the previous equalities are all constant functions.
Equalities \eqref{eq:antisymmcoeff} will be important in the proof of Proposition~\ref{prop:polynomial}.

Every polynomial on $\R^{n}$, thought of as equipped with dilations
$\delta_{r}$, is the sum of homogeneous polynomials and the maximum
among these degrees is the \emph{degree} \emph{of the polynomial}.
Polynomials of degree one are just affine functions $\ell$ of the
form 
\[
\ell(x)=\alpha+\langle\beta,\pi(x)\rangle
\]
with $\beta=(\beta_{1},\beta_{2},\ldots,\beta_{m})\in\R^{n}$, $\alpha\in\R$ and we have used
the projection
\begin{equation}\label{def:pi}
\pi:\R^n\to\R^m,\quad \pi(x)=(x_1,\ldots,x_m).
\end{equation}
 A homogeneous polynomial of degree two must have
the form
\begin{equation}
p(x)=\frac{1}{2}\sum_{i,j=1}^{m}c_{ij}x_{i}x_{j}+\sum_{l=m+1}^{m_{2}}c_{l}x_{l},\label{eq:2homogpolyn}
\end{equation}
where $c_{ij}$ and $c_{l}$ are real numbers, with $c_{ij}=c_{ji}$
for all $i,j=1,\ldots,m$. 
\begin{prop}
\label{prop:polynomial} Let $p:\R^{n}\to\R$ be a 2-homogeneous polynomial
of the form \eqref{eq:2homogpolyn} and let us consider the basis
$X_{m+1},\ldots,X_{m_{2}}$ of $V_{2}$. Then we have
\[
c_{ij}=\frac{1}{2}(X_{i}X_{j}p+X_{j}X_{i}p)\quad\text{and}\quad X_{i}X_{j}p=c_{ij}+\sum_{l=m+1}^{m_{2}}\gamma_{ij}^{l}c_{l},
\]
where $\gamma_{ij}^{l}$ are proportional to the structure constants
of the Lie algebra, namely 
\begin{equation}\label{eq:XiXjComm}
[X_{i},X_{j}]=\sum_{l=m+1}^{m_{2}}2\gamma_{ij}^{l}X_{l}
\end{equation}
and $i,j=1,\ldots,m$.
\end{prop}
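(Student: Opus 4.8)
The plan is to compute $X_iX_jp$ directly from the explicit form \eqref{eq:2homogpolyn} of $p$ and the explicit form \eqref{eq:vectfields} of the vector fields, keeping track of which terms survive after applying two first-order operators to a polynomial that is $2$-homogeneous. First I would record that $p$ has horizontal degree-two part $\tfrac12\sum c_{ij}x_ix_j$ and a degree-two ``vertical'' part $\sum_{l=m+1}^{m_2}c_lx_l$; since $d_l=2$ for $l=m+1,\dots,m_2$, the dilation weights match, so both pieces are genuinely $2$-homogeneous. Applying $X_j=\partial_{x_j}+\sum_{i>m_{d_j}}a_{ji}\partial_{x_i}$ for $j\le m$ (so $d_j=1$ and the sum runs over $i>m_1=m$): on the horizontal part $\partial_{x_j}$ gives $\sum_i c_{ji}x_i$ and the correction terms $a_{ji}\partial_{x_i}$ kill it because $\tfrac12\sum c_{ij}x_ix_j$ depends only on $x_1,\dots,x_m$ while $i>m$; on the vertical part $\partial_{x_j}$ gives $0$ (again $l>m\ge j$) and $\sum_i a_{ji}\partial_{x_i}\big(\sum_l c_lx_l\big)=\sum_{l=m+1}^{m_2}a_{jl}(x)\,c_l$, using that the coefficients $a_{ji}$ vanish unless $d_i\ge d_j+1=2$ and are themselves polynomials, but here they are needed only for $i=l\le m_2$. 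So $X_jp(x)=\sum_{i=1}^m c_{ji}x_i+\sum_{l=m+1}^{m_2}a_{jl}(x)\,c_l$, and I note $a_{jl}$ is a homogeneous polynomial of degree $d_l-d_j=1$, hence linear in $x_1,\dots,x_m$.

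Next I would apply $X_i$ to $X_jp$. The first summand $\sum_k c_{jk}x_k$ is linear in horizontal variables, so $X_i$ of it is just $\partial_{x_i}$ of it, namely $c_{ji}=c_{ij}$ (the correction terms again see only variables $x_k$ with $k>m$, which do not occur). For the second summand $\sum_l c_l a_{jl}(x)$: since $a_{jl}$ is linear in $x_1,\dots,x_m$ with constant coefficients $\partial_{x_k}a_{jl}$, and $X_i=\partial_{x_i}+(\text{terms }\partial_{x_k},\,k>m)$ acting on a function of $x_1,\dots,x_m$ reduces to $\partial_{x_i}$, we get $X_i\big(\sum_l c_la_{jl}\big)=\sum_{l=m+1}^{m_2}c_l\,\partial_{x_i}a_{jl}$. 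Therefore
\[
X_iX_jp=c_{ij}+\sum_{l=m+1}^{m_2}c_l\,\frac{\partial a_{jl}}{\partial x_i}.
\]
Setting $\gamma_{ij}^l\df\partial a_{jl}/\partial x_i$ (a constant, by the remark after \eqref{eq:antisymmcoeff}) gives the stated formula $X_iX_jp=c_{ij}+\sum_l\gamma_{ij}^lc_l$.

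It remains to identify $\gamma_{ij}^l$ with the structure constants and to recover the symmetrization formula. For the symmetrization: from the displayed expression, $X_iX_jp+X_jX_ip=2c_{ij}+\sum_l c_l(\partial_{x_i}a_{jl}+\partial_{x_j}a_{il})$, and by the antisymmetry relation \eqref{eq:antisymmcoeff}, $\partial_{x_i}a_{jl}=-\partial_{x_j}a_{il}$, so the sum cancels and $c_{ij}=\tfrac12(X_iX_jp+X_jX_ip)$. For the commutator: compute $[X_i,X_j]$ using \eqref{eq:vectfields}; the top-order parts $\partial_{x_i},\partial_{x_j}$ commute, and a direct bracket computation gives $[X_i,X_j]=\sum_{l}\big(\partial_{x_i}a_{jl}-\partial_{x_j}a_{il}\big)\partial_{x_l}+(\text{terms }\partial_{x_k},\,d_k\ge 3)$, and again using \eqref{eq:antisymmcoeff} this equals $\sum_{l=m+1}^{m_2}2(\partial_{x_i}a_{jl})\partial_{x_l}+\cdots$. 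On the other hand $[X_i,X_j]\in[V_1,V_1]=V_2$, so the higher-degree terms must vanish and $[X_i,X_j]=\sum_l 2\gamma_{ij}^lX_l$ with $\gamma_{ij}^l=\partial_{x_i}a_{jl}$, matching \eqref{eq:XiXjComm} and closing the argument. The only mildly delicate point, and the one I would be most careful about, is the bookkeeping on the ranges of indices in \eqref{eq:vectfields} — namely verifying that all correction terms $a_{ji}\partial_{x_i}$ with $i>m$ annihilate the horizontal monomials and that only the coefficients $a_{jl}$ with $m<l\le m_2$ contribute, which is exactly where homogeneity of $a_{ji}$ (degree $d_i-d_j$) is used.
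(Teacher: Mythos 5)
Your proof is correct and follows essentially the same path as the paper's: both compute $X_jp$, then $X_iX_jp=c_{ij}+\sum_l c_l\,\partial_{x_i}a_{jl}$ from \eqref{eq:vectfields} and \eqref{eq:2homogpolyn}, and both invoke the antisymmetry relation \eqref{eq:antisymmcoeff} for the symmetrization formula. The one organizational difference is how the coefficients $\partial_{x_i}a_{jl}$ are matched with the structure constants $\gamma_{ij}^l$: the paper takes $\gamma_{ij}^l$ as \emph{defined} by \eqref{eq:XiXjComm}, observes $X_lp=c_l$, and then applies the abstract operator identity $X_iX_j=(X_iX_j)^s+\tfrac12[X_i,X_j]$ to land on $X_iX_jp=c_{ij}+\sum_l\gamma_{ij}^lc_l$ without ever touching the explicit commutator of the vector fields; you instead read $\gamma_{ij}^l$ off your computation of $X_iX_jp$ and then separately verify, by an explicit bracket computation together with the fact that $[V_1,V_1]=V_2$ kills the higher-weight correction terms, that these coefficients satisfy $[X_i,X_j]=\sum_l 2\gamma_{ij}^lX_l$. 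Both routes are valid and of comparable length; the paper's avoids the explicit bracket calculation, while yours makes the concrete identification $\gamma_{ij}^l=\partial_{x_i}a_{jl}$ visible, which some readers may find more transparent.
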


\begin{proof}
We first define the symmetrized second order derivative 
\[
(X_{i}X_{j})^{s}\df\frac{X_{i}X_{j}+X_{j}X_{i}}{2},
\]
so that we can write
\begin{equation}\label{eq:XiXjsComm}
X_{i}X_{j}=(X_{i}X_{j})^{s}+\frac{1}{2}[X_{i},X_{j}],
\end{equation}
for every $i,j=1,\ldots,m$. 
Since $X_{i}X_{j}$ and $X_{l}$ are homogeneous differential operators of order $-2$ and $p$ has degree 2, the horizontal derivatives $X_{i}X_{j}p$ and $X_{l}p$ are constants.

By \eqref{eq:2homogpolyn} and \eqref{eq:vectfields}, we get
\[
X_{j}p=\sum_{i=1}^{m}c_{ji}x_{i}+\sum_{i=m+1}^{n}a_{ji}\der_{x_{i}}\left(\sum_{l=m+1}^{m_{2}}c_{l}x_{l}\right)=\sum_{i=1}^{m}c_{ji}x_{i}+\sum_{i=m+1}^{m_{2}}a_{ji}c_{i},
\]
for $j=1,\ldots,m$. As a consequence, taking into account the form of the vector fields \eqref{eq:vectfields}  and of the polynomial \eqref{eq:2homogpolyn},
for any $i,j=1,\ldots,m$
and $l=m+1,\ldots,m_{2}$, we get
\begin{equation}\label{eq:XiXj}
X_{i}X_{j}p=
c_{ij}+\sum_{s=m+1}^{m_{2}}\der_{x_{i}}a_{js}c_{s}.
\end{equation}
To establish the previous equality, we have also observed that 
the polynomials $a_{ji}$ are homogeneous of degree $d_i-d_j=1$, therefore they are only depending on their first $m$ variables. In particular, all the partial derivatives $\der_{x_l}a_{ji}$ are vanishing whenever the integers $l$ and $i$ take values from $m+1$ to $m_2$ and $j=1,\ldots,m$.
Combining \eqref{eq:XiXj} and \eqref{eq:antisymmcoeff}, we also obtain the first of the following equalities
\[
X_{i}X_{j}p+X_{j}X_{i}p=2c_{ij}\quad\text{and}\quad X_{l}p=c_{l},
\]
with $1\le i,j\le m$ and $m+1\le l\le m_2$. The latter directly follows from the form of \eqref{eq:2homogpolyn}. In conclusion, by virtue of \eqref{eq:XiXjComm}, \eqref{eq:XiXjsComm} and \eqref{eq:XiXj}, we have obtained that
\[
X_{i}X_{j}p=(X_{i}X_{j})^{s}p+\sum_{l=m+1}^{m_{2}}\gamma_{ij}^{l}X_{l}p=c_{ij}+\sum_{l=m+1}^{m_{2}}\gamma_{ij}^{l}c_{l},
\]
hence concluding the proof.
\end{proof}
\begin{defn}
\label{def:c_ijrx_0} For $B_{r}(x_{0})\Subset\Omega$ and $u\in W_{H,\text{loc}}^{2,1}(\Omega)$,
we define the matrix 
\[
P_{r}^{x_{0}}\df(D_{h}^{2}u)_{B_{r}(x_{0})}-\frac{1}{m}(\Delta_{H}u)_{B_{r}(x_{0})}I_{m}\in\R^{n\times n},
\]
where $I_{m}$ stands for the identity matrix and the $(i,j)$ entry
of $(D_{h}^{2}u)_{B_{r}(x_{0})}$ is the average $(X_{i}X_{j}u)_{B_{r}(x_{0})}$.
Associated to the ball $B_{r}(x_{0})$, we also define the coefficients
\[
c_{ij}^{r,x_{0}}\df\left(\frac{X_{i}X_{j}u+X_{j}X_{i}u}{2}\right)_{B_{r}(x_{0})}-\frac{1}{m}\dil{ij}\,(\Delta_{H}u)_{B_{r}(x_{0})}\quad\text{and}\quad c_{l}^{r,x_{0}}=(X_{l}u)_{B_{r}(x_{0})}.
\]
These numbers define the 2-homogeneous polynomial 
\[
p_{r}^{x_{0}}(x)=\frac{1}{2}\sum_{i,j=1}^{m}c_{ij}^{r,x_{0}}x_{i}x_{j}+\sum_{l=m+1}^{m_{2}}c_{l}^{r,x_{0}}x_{l},
\]
that we will show to be related to $P_{r}^{x_{0}}$. 
\end{defn}

\begin{cor}\label{c:2ndorderpolyn}
In the assumptions of Definition \ref{def:c_ijrx_0}, the 2-homogeneous
polynomial $p_{r}^{x_{0}}$ is harmonic and
\[
D_{h}^{2}p_{r}^{x_{0}}=P_{r}^{x_{0}}.
\]
\end{cor}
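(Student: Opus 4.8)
The plan is to apply Proposition~\ref{prop:polynomial} to the 2-homogeneous polynomial $p_r^{x_0}$, whose coefficients are exactly $c_{ij}^{r,x_0}$ and $c_l^{r,x_0}$ as in Definition~\ref{def:c_ijrx_0}, and then to match the resulting formulas for $X_iX_jp_r^{x_0}$ with the entries of $P_r^{x_0}$. First I would record from Proposition~\ref{prop:polynomial} that
\[
X_iX_jp_r^{x_0}=c_{ij}^{r,x_0}+\sum_{l=m+1}^{m_2}\gamma_{ij}^l\,c_l^{r,x_0},
\qquad
\Delta_H p_r^{x_0}=\sum_{i=1}^m X_iX_i p_r^{x_0}=\sum_{i=1}^m c_{ii}^{r,x_0},
\]
where the last equality uses that $\gamma_{ii}^l=0$ (antisymmetry of the structure constants), so that the commutator terms drop out of the trace.

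The second step is harmonicity. Plugging in the definition of $c_{ii}^{r,x_0}$,
\[
\sum_{i=1}^m c_{ii}^{r,x_0}
=\sum_{i=1}^m\Bigl(X_iX_iu\Bigr)_{B_r(x_0)}-\frac{1}{m}\sum_{i=1}^m\delta_{ii}\,(\Delta_H u)_{B_r(x_0)}
=(\Delta_H u)_{B_r(x_0)}-(\Delta_H u)_{B_r(x_0)}=0,
\]
using linearity of the average and $\sum_{i=1}^m\delta_{ii}=m$. Hence $\Delta_H p_r^{x_0}=0$, which is the harmonicity claim.

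For the Hessian identity I would check the two cases of an entry of $D_h^2 p_r^{x_0}$ against the corresponding entry of $P_r^{x_0}=(D_h^2u)_{B_r(x_0)}-\tfrac1m(\Delta_H u)_{B_r(x_0)}I_m$. The $(i,j)$ entry of $P_r^{x_0}$ is $(X_iX_ju)_{B_r(x_0)}-\tfrac1m\delta_{ij}(\Delta_H u)_{B_r(x_0)}$. On the other side, by Proposition~\ref{prop:polynomial} and the definition of $c_l^{r,x_0}=(X_lu)_{B_r(x_0)}$,
\[
X_iX_jp_r^{x_0}
=c_{ij}^{r,x_0}+\sum_{l=m+1}^{m_2}\gamma_{ij}^l\,(X_lu)_{B_r(x_0)}
=\left(\frac{X_iX_ju+X_jX_iu}{2}\right)_{B_r(x_0)}-\frac{\delta_{ij}}{m}(\Delta_H u)_{B_r(x_0)}+\left(\sum_{l=m+1}^{m_2}\gamma_{ij}^l X_lu\right)_{B_r(x_0)}.
\]
Now I would use \eqref{eq:XiXjComm} in the form $\tfrac12[X_i,X_j]u=\sum_{l}\gamma_{ij}^l X_lu$, so that the symmetrized average plus the commutator average recombine, by \eqref{eq:XiXjsComm} applied pointwise to $u$, into $(X_iX_ju)_{B_r(x_0)}$; this gives $X_iX_jp_r^{x_0}=(X_iX_ju)_{B_r(x_0)}-\tfrac{\delta_{ij}}{m}(\Delta_H u)_{B_r(x_0)}$, matching the $(i,j)$ entry of $P_r^{x_0}$.

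The only genuinely delicate point is a regularity/measurability bookkeeping issue rather than an algebraic one: Proposition~\ref{prop:polynomial} is an identity for smooth polynomials, whereas here the coefficients $c_{ij}^{r,x_0},c_l^{r,x_0}$ are averages of the merely $L^1_{\mathrm{loc}}$ functions $X_iX_ju$, $X_lu$ of a $W^{2,1}_{H,\mathrm{loc}}$ function $u$. This is harmless — once the real numbers $c_{ij}^{r,x_0},c_l^{r,x_0}$ are fixed, $p_r^{x_0}$ is an honest polynomial and Proposition~\ref{prop:polynomial} applies verbatim — but I would state it explicitly, and I would also note that passing the finite linear combination $\sum_l\gamma_{ij}^l X_lu$ and the commutator relation $\tfrac12[X_i,X_j]u=\sum_l\gamma_{ij}^lX_lu$ through the average $\fint_{B_r(x_0)}$ is legitimate because $u\in W^{2,1}_{H,\mathrm{loc}}$ ensures $X_iX_ju,X_jX_iu,X_lu\in L^1(B_r(x_0))$ and the identity \eqref{eq:XiXjsComm} holds a.e. for such $u$. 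With this understood, the two computations above close the proof.
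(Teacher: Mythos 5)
Your proof is correct and follows essentially the same route as the paper's: apply Proposition~\ref{prop:polynomial} to the 2-homogeneous polynomial with coefficients $c_{ij}^{r,x_0}$, $c_l^{r,x_0}$, use $\gamma_{ii}^l=0$ for harmonicity, and then recombine the symmetrized average with the commutator average (via \eqref{eq:XiXjsComm}) to recover $(X_iX_ju)_{B_r(x_0)}$. Your extra remark on the $W^{2,1}_{H,\mathrm{loc}}$ bookkeeping is a harmless addition that the paper leaves implicit.
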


\begin{proof}
By Proposition \ref{prop:polynomial}, we have
\[
X_{i}X_{j}p_{r}^{x_{0}}=c_{ij}^{r,x_{0}}+\sum_{l=m+1}^{m_{2}}\gamma_{ij}^{l}c_{l}^{r,x_{0}},
\]
where $\gamma_{ii}^{l}=0$ and by definition of $c_{ii}^{r,x_{0}}$
we get 
\[
\Delta_{H}p_{r}^{x_{0}}=\sum_{i=1}^{m}c_{ii}^{r,x_{0}}=\sum_{i=1}^{m}\left[\left(X_{i}X_{i}u\right)_{B_{r}(x_{0})}-\frac{1}{m}\,(\Delta_{H}u)_{B_{r}(x_{0})}\right]=0.
\]
Finally, we observe that 
\[
\begin{split}X_{i}X_{j}p_{r}^{x_{0}}= & \left(\frac{X_{i}X_{j}u+X_{j}X_{i}u}{2}\right)_{B_{r}(x_{0})}-\frac{1}{m}\dil{ij}\,(\Delta_{H}u)_{B_{r}(x_{0})}+\left(\sum_{l=m+1}^{m_{2}}\gamma_{ij}^{l}X_{l}u\right)_{B_{r}(x_{0})}\\
= & \left(\frac{X_{i}X_{j}u+X_{j}X_{i}u}{2}\right)_{B_{r}(x_{0})}-\frac{1}{m}\dil{ij}\,(\Delta_{H}u)_{B_{r}(x_{0})}+\left(\frac{[X_{i},X_{j}]u}{2}\right)_{B_{r}(x_{0})}\\
= & \left(X_{i}X_{j}u\right)_{B_{r}(x_{0})}-\frac{1}{m}\dil{ij}\,(\Delta_{H}u)_{B_{r}(x_{0})},
\end{split}
\]
having taken into account that $[X_{i},X_{j}]=\sum_{l=m+1}^{m_{2}}2\gamma_{ij}^{l}X_{l}$
from Proposition \ref{prop:polynomial}.
\end{proof}
 The following $W_{H}^{2,p}$ estimates go back to the work of Folland
\cite{MR0494315}, see also the work by Bramanti and Brandolini
\cite{MR1608289} for more general hypoelliptic operators.  
\begin{thm}
[\cite{MR1608289}]\label{thm:BramEst} Let $1<p<\infty$ and consider two bounded open sets $\Omega$ and $\Omega'$ with $\Omega'\Subset\Omega$. Then there exists a constant $C>0$ such that for every $u\in W_{H}^{2,p}(\Omega)$ it holds
\begin{align}
\lVert X_{i}X_{j}u\rVert_{L^{p}(\Omega')} & \le C\left(\lVert\Delta_{H}u\rVert_{L^{p}(\Omega)}+\lVert u\rVert_{L^{p}(\Omega)}\right).\label{eq:W2P}
\end{align}
\end{thm}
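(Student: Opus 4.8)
The plan is to deduce the interior inequality \eqref{eq:W2P} from a global a priori estimate for test functions, obtained through the fundamental solution $\Gamma$, and then to localize by a cutoff-and-absorption argument. First I would prove that for every $1<p<\infty$ there is $C_p>0$ with $\lVert X_iX_j v\rVert_{L^p(\R^n)}\le C_p\lVert\Delta_H v\rVert_{L^p(\R^n)}$ for all $v\in C_c^\infty(\R^n)$ and $i,j=1,\dots,m$. Since $\Delta_H$ is self-adjoint with respect to the Haar measure, the fundamental solution is symmetric, $\Gamma(x^{-1})=\Gamma(x)$, and one has the representation $v=-(\Delta_H v)*\Gamma$ in the sense of group convolution, as in Folland \cite{MR0494315}. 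Because $\Delta_H$ and the $X_l$ are left-invariant, applying $X_iX_j$ in the $x$ variable transfers the derivatives onto $\Gamma$, so that
\[
X_iX_j v=-(\Delta_H v)*(X_iX_j\Gamma),
\]
where the right-hand side must be read as a principal value plus a bounded multiple of $\Delta_H v$ on the diagonal, since the kernel $K_{ij}\df X_iX_j\Gamma$ is no longer locally summable.

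The next step is to check that $K_{ij}$ is a Calder\'on--Zygmund kernel adapted to the homogeneous group: it lies in $C^\infty(\R^n\sm\{0\})$ and is $(-Q)$-homogeneous with respect to $\delta_r$, because $\Gamma$ is $(2-Q)$-homogeneous and each $X_l$ lowers the degree of homogeneity by one; moreover it satisfies the cancellation condition on gauge spheres, which follows from the homogeneity together with $\sum_{i=1}^m K_{ii}=\Delta_H\Gamma=0$ on $\R^n\sm\{0\}$, exactly as for the second-order Riesz transforms in the Euclidean setting. Since $\R^n$ endowed with $d$ and the Lebesgue measure is a space of homogeneous type and $v\mapsto(\Delta_H v)*K_{ij}$ is a convolution operator with such a kernel, the $L^p$ boundedness of singular integrals on homogeneous groups (Knapp--Stein, Kor\'anyi--V\'agi, Folland and Stein) yields the global estimate. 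This part is essentially Folland's original argument \cite{MR0494315}; the version for non-smooth H\"ormander operators in \cite{MR1608289} replaces it by a frozen-coefficients and lifting technique.

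For the localization, fix $\Omega'\Subset\Omega''\Subset\Omega'''\Subset\Omega$ and $\eta\in C_c^\infty(\Omega'')$ with $\eta\equiv1$ on $\Omega'$. For $u\in W_H^{2,p}(\Omega)$, after a mollification one may assume $u$ smooth, the estimate being stable under the approximation; applying the global bound to $\eta u$ and expanding
\[
\Delta_H(\eta u)=\eta\,\Delta_H u+2\langle\nabla_h\eta,\nabla_h u\rangle+(\Delta_H\eta)\,u,
\]
with $\nabla_h\eta$ and $\Delta_H\eta$ supported in $\Omega''$, gives
\[
\lVert X_iX_j u\rVert_{L^p(\Omega')}\le C\big(\lVert\Delta_H u\rVert_{L^p(\Omega)}+\lVert\nabla_h u\rVert_{L^p(\Omega'')}+\lVert u\rVert_{L^p(\Omega)}\big).
\]
The first-order term is then removed by the standard interpolation inequality $\lVert\nabla_h u\rVert_{L^p(\Omega'')}\le\varepsilon\lVert D_h^2 u\rVert_{L^p(\Omega''')}+C_\varepsilon\lVert u\rVert_{L^p(\Omega)}$, iterated along a chain of intermediate domains in the spirit of Caffarelli's technique, cf. \cite{MR1005611}, producing \eqref{eq:W2P}. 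The main obstacle is the second step: showing that $X_iX_j\Gamma$ is a genuine Calder\'on--Zygmund kernel on the group — namely the cancellation condition and the precise meaning of the principal value together with its diagonal correction — and then invoking the corresponding $L^p$ theory; by comparison, the localization and the absorption of lower-order terms are routine.
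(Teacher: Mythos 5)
This result is stated in the paper as a quoted theorem of Bramanti and Brandolini, with the surrounding text also attributing the underlying estimate to Folland for the constant-coefficient sub-Laplacian; the paper supplies no proof of it, so there is no in-paper argument to compare against. Your proposal reproduces Folland's classical route: write $v=-(\Delta_H v)*\Gamma$, observe that $X_iX_j v$ equals a principal-value convolution of $\Delta_Hv$ with $K_{ij}=X_iX_j\Gamma$ plus a constant multiple of $\Delta_Hv$, invoke the $L^p$ boundedness of singular integrals on homogeneous groups, and then localize with a cutoff while absorbing the first-order term by interpolation over a chain of intermediate domains. The representation formula, the homogeneity and smoothness of $K_{ij}$, the interpretation of the convolution as a principal value with a diagonal correction, and the entire localization and absorption step are correct and standard; this is the natural proof in the stratified-group setting, whereas the cited Bramanti--Brandolini paper targets the harder variable-coefficient case by freezing and lifting.

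The one step you rightly flag as the main obstacle, the cancellation condition, is however supported by a wrong argument, not just an incomplete one. The trace identity $\sum_{i=1}^m K_{ii}=\Delta_H\Gamma=0$ away from the origin is a single scalar constraint and gives no control whatsoever over the mean of an individual $K_{ij}$ on a gauge annulus; in particular it says nothing at all for $i\neq j$. Even in the Euclidean case the off-diagonal Riesz kernels $\partial_i\partial_j\lvert x\rvert^{2-n}$, $i\neq j$, have vanishing mean because they are odd under the reflection $x_i\mapsto -x_i$, i.e.\ by rotational and reflection symmetry, not by the trace; a general stratified group has no such symmetry permuting the horizontal directions, so the analogy does not transfer. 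The correct argument, which is exactly what Folland proves and what the Folland--Stein theory of kernels of type $\lambda$ encodes, is structural: $K_{ij}=X_i(X_j\Gamma)$ is the horizontal derivative of the kernel $X_j\Gamma$, which is smooth away from the origin and homogeneous of degree $1-Q$, hence locally integrable; this, together with the fact that the left-invariant fields $X_i$ are divergence-free with respect to the Haar measure, automatically furnishes the principal-value extension and the required cancellation for $K_{ij}$. Replacing your Euclidean-symmetry heuristic with this structural fact (or with a citation to the relevant results of Folland, Folland--Stein, or Knapp--Stein) closes the gap; without that the proof as written does not go through.
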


It is well known that even for the classical Laplacian operator $\Delta$,
it is not true that $L^{\infty}$ bounds on $\Delta u$ imply the
boundedness of second order horizontal derivatives. Indeed our starting
point is that bounds on the $BMO$ norm of the sub-Laplacian $\Delta_{H}u$
show that the $BMO$ norm of the horizontal Hessian of $u$ is bounded,
according to the results of Bramanti et al. \cite{MR2174915}, \cite{MR3100057}.

\begin{thm}[{\cite[Theorem~2.10]{MR3100057}}]
\label{thm:psigmaBMO} Let $1<p<\infty,$ $0<\sigma<1$,
$u\in BMO^p_{\text{loc}}(B_1)$ and let $\Delta_{H}u\in BMO^{p}_{\text{loc}}(B_{1})$.
Then $X_iX_ju\in BMO^p(B_\sigma)$ for $i,j=1,\ldots,m$ and 
there exists a universal constant $C(\sigma,p)>0$ such that
\begin{align}\label{eq:BMO}
\lVert X_{i}X_{j}u\rVert_{BMO^{p}(B_{\sigma})} & \le C(\sigma,p)\left(\lVert\Delta_{H}u\rVert_{BMO_{\text{loc}}^{p}(B_{1})}+\lVert u\rVert_{BMO_{\text{loc}}^{p}(B_{1})}\right).
\end{align}
\end{thm}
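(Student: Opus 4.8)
The plan is to localize against the fundamental solution and then to use a Calder\'on--Zygmund estimate in $BMO$. First I would observe that, since $\Delta_H u\in BMO^p_{\text{loc}}(B_1)\subset L^p_{\text{loc}}(B_1)$ and $u\in L^p_{\text{loc}}(B_1)$, the $W^{2,p}$ estimates of Folland and of Bramanti--Brandolini (Theorem~\ref{thm:BramEst}) give $u\in W^{2,p}_{H,\text{loc}}(B_1)$, together with the quantitative interior bound
\[
\lVert\nabla_h u\rVert_{L^p(B_{\sigma'})}+\lVert D_h^2 u\rVert_{L^p(B_{\sigma'})}\le C\big(\lVert\Delta_H u\rVert_{L^p(B_1)}+\lVert u\rVert_{L^p(B_1)}\big)
\]
for any $\sigma<\sigma'<1$, the first-order term coming from an interior interpolation inequality between $D_h^2u$ and $u$. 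In particular the entries $X_iX_ju$ of the horizontal Hessian are genuine $L^p_{\text{loc}}$ functions, so estimating their $BMO$ norm makes sense.

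Next I would fix radii $\sigma<\sigma_1<\sigma_2<1$ and a cutoff $\eta\in C_c^\infty(B_{\sigma_2})$ with $\eta\equiv1$ on $B_{\sigma_1}$, and write $\Delta_H(\eta u)=\eta\,\Delta_H u+g$ with remainder $g\df 2\langle\nabla_h\eta,\nabla_h u\rangle+u\,\Delta_H\eta$ supported in the closed annulus $\overline{B_{\sigma_2}}\setminus B_{\sigma_1}$. Since $\eta u$ is a compactly supported $W^{2,p}_H$ function, the defining property $\Delta_H\Gamma=-\delta_0$ gives $\eta u=-\Gamma*(\eta\,\Delta_H u)-\Gamma*g$ (the difference of the two sides being a harmonic function vanishing at infinity, hence zero), and, $\Delta_H$ being left invariant, on $B_{\sigma_1}$ this differentiates to
\[
X_iX_ju=-(X_iX_j\Gamma)*(\eta\,\Delta_H u)-(X_iX_j\Gamma)*g ,
\]
where $X_iX_j\Gamma$ is read as a principal value modulo a constant multiple of the Dirac mass, as forced by $\sum_iX_iX_i\Gamma=\Delta_H\Gamma=-\delta_0$.

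For the principal term I would use that $X_iX_j\Gamma$ is smooth on $\mathbb{R}^n\setminus\{0\}$ and $(-Q)$-homogeneous for the dilations $\delta_r$, so that — up to a bounded multiple of the identity — it is the kernel of a Calder\'on--Zygmund convolution operator on the space of homogeneous type $(\mathbb{R}^n,d,\text{Lebesgue})$; by the singular integral theory of Folland and Stein on stratified groups, and the $BMO$ theory for such operators, it is bounded on $L^p$ for $1<p<\infty$ and on $BMO$. As $\eta\,\Delta_H u$ is compactly supported it lies in $L^p(\mathbb{R}^n)\cap BMO(\mathbb{R}^n)$ with $\lVert\eta\,\Delta_H u\rVert_{BMO^p(\mathbb{R}^n)}\le C\lVert\Delta_H u\rVert_{BMO^p_{\text{loc}}(B_1)}$ — multiplication by the fixed function $\eta$ being bounded on $BMO^p$ — whence the first convolution lies in $BMO^p(\mathbb{R}^n)$ with the same bound, and restriction to $B_\sigma$ costs only a controlled constant. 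For the remainder, I would exploit that $g$ is supported away from $B_\sigma$: for $x\in B_\sigma$ and $y\in\supp g$ one has $d(x,y)\ge\sigma_1-\sigma>0$, so on $B_\sigma$ the convolution $(X_iX_j\Gamma)*g$ involves no singularity, lies in $C^\infty(B_\sigma)$, and satisfies $\lVert(X_iX_j\Gamma)*g\rVert_{L^\infty(B_\sigma)}\le C\lVert g\rVert_{L^1(B_{\sigma_2})}\le C\big(\lVert\nabla_h u\rVert_{L^p(B_{\sigma_2})}+\lVert u\rVert_{L^p(B_{\sigma_2})}\big)$, which is controlled by the $W^{2,p}$ bound of the first step. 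An $L^\infty(B_\sigma)$ bound is a fortiori a $BMO^p(B_\sigma)$ bound, and adding the two contributions yields $X_iX_ju\in BMO^p(B_\sigma)$ together with \eqref{eq:BMO}.

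The step I expect to be the main obstacle is the $BMO\to BMO$ mapping property of the second-order operator $f\mapsto(X_iX_j\Gamma)*f$: one must identify $X_iX_j\Gamma$ as a Calder\'on--Zygmund kernel compatible with the Carnot--Carath\'eodory structure — pinning down the cancellation hidden in $\Delta_H\Gamma=-\delta_0$ and the residual Dirac term — and then either invoke or reconstruct the $BMO$ boundedness of such convolution operators on stratified groups. By comparison, the localization, the treatment of the annular remainder, and the interior interpolation bound for $\nabla_h u$ are routine.
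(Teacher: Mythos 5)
The paper does not actually prove this statement: it is imported as a black box from Bramanti et al.\ \cite{MR3100057} (cited as Theorem~2.10 there), so there is no in-paper argument to compare against. That said, your outline is the standard route to a $BMO$ estimate for a constant-coefficient sub-Laplacian, and it is essentially sound: localize $u$ with a cutoff, represent the localized function via the fundamental solution, and invoke the Folland--Stein singular-integral theory on stratified groups for the principal-value kernels $X_iX_j\Gamma$ (which are smooth off the origin, $(-Q)$-homogeneous, carry the cancellation forced by $\sum_iX_iX_i\Gamma=-\delta_0$, and hence are $BMO$-bounded, modulo a constant multiple of the identity). The remainder term supported in the annulus is indeed controlled in $L^\infty$ on $B_\sigma$ since the kernel is nonsingular there.

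Two small flags. First, with the paper's convention $f*\Gamma(x)=\int f(z)\Gamma(z^{-1}x)\,dz$, left-invariant derivatives always fall on the right factor, so what you want is $X_iX_j(\eta u)=-(\eta\,\Delta_Hu+g)*(X_iX_j\Gamma)$; since group convolution is noncommutative, the factor order you wrote should be reversed. Second, and more substantively, you correctly bootstrap $u\in W^{2,p}_{H,\mathrm{loc}}$ from Folland's interior regularity before differentiating the representation formula; this is important, because --- as the authors themselves stress in Remark~\ref{rmk:BMOest} --- Bramanti's Theorem~2.10 is stated for nonvariational operators under the a~priori assumption that $u$ and its horizontal derivatives already lie in $BMO$, and that hypothesis must be discharged (the paper suggests mollification; your Folland bootstrap is an equally valid way to do it for the constant-coefficient operator $\Delta_H$). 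Your argument, as written, proves only the constant-coefficient case, which is all that is needed here, and is simpler than the general nondivergence-form result in \cite{MR3100057}.
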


\begin{rem}\label{rmk:BMOest}
Note that the nonvariational form of the operator in \cite[Theorem 2.10]{MR3100057}
needs a priori that the solution $u$ and its horizontal derivatives
are $BMO$. For our purposes, it is very important that the $BMO$ regularity
of $u$ is established with no a priori assumptions. This can be obtained
for the sub-Laplacian operator, since its distributional form allows us to apply a mollification argument. 
\end{rem}

In the sequel, we will also use the Frobenius norm $\lvert M\rvert$ for a matrix $M$ 
of coefficients $m_{ij}$, setting 
\[
\lvert M\rvert=\sqrt{\sum_{ij}\lvert m_{ij}\rvert^{2}}.
\]
With this definition we easily notice that $\lvert \Delta_{H}u\rvert\le \lvert D_{h}^{2}u\rvert$.

\begin{cor}\label{cor:BMOest}
Let $1<p<\infty$ and $0<\sigma<1$ be fixed. There exists 
$C(\sigma,p)>0$ such that for all $u\in BMO^p_{\text{loc}}(B_1)$ that satisfy the condition
$\Delta_Hu\in L^\infty(B_1)$, we have $X_iX_ju\in BMO^p(B_\sigma)$ 
for $i,j=1,\ldots,m$ and whenever $x_0\in B_\sigma$, $0<r<1-\sigma$, it holds
\begin{equation}\label{eq:u-Pr-BMO}
\fint_{B_{r}(x_{0})\cap B_{\sigma}}\lvert D_{h}^{2}u(y)-P_{r}^{x_{0}}\rvert dy\le C(\sigma,p)\left(\lVert\Delta_{\Heis}u\rVert_{L^{\infty}(B_{1})}+\lVert u\rVert_{BMO_{\text{loc}}^{p}(B_{1})}\right),
\end{equation}
where the matrix $P_{r}^{x_{0}}$ is introduced in Definition \ref{def:c_ijrx_0}.
\end{cor}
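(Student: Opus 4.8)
The plan is to deduce Corollary~\ref{cor:BMOest} from Theorem~\ref{thm:psigmaBMO} by first checking that the hypotheses of that theorem are met and then translating the $BMO$ bound on the individual entries $X_iX_ju$ into the averaged bound \eqref{eq:u-Pr-BMO} involving the matrix $P_r^{x_0}$. The first observation is that $\Delta_H u\in L^\infty(B_1)\subset BMO^p_{\text{loc}}(B_1)$ for every $p$, since on any ball compactly contained in $B_1$ the $L^p$ norm is controlled by the $L^\infty$ norm and the oscillation is bounded by $2\lVert\Delta_H u\rVert_{L^\infty}$; in fact $\lVert\Delta_H u\rVert_{BMO^p_{\text{loc}}(B_1)}\le c\,\lVert\Delta_H u\rVert_{L^\infty(B_1)}$ for a dimensional constant. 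Together with the standing assumption $u\in BMO^p_{\text{loc}}(B_1)$, Theorem~\ref{thm:psigmaBMO} applies and gives $X_iX_ju\in BMO^p(B_\sigma)$ with
\[
\lVert X_iX_ju\rVert_{BMO^p(B_\sigma)}\le C(\sigma,p)\bigl(\lVert\Delta_H u\rVert_{L^\infty(B_1)}+\lVert u\rVert_{BMO^p_{\text{loc}}(B_1)}\bigr)
\]
after absorbing the constant relating the two norms of $\Delta_H u$.

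Next I would unwind the definition of $P_r^{x_0}$ from Definition~\ref{def:c_ijrx_0}. Its $(i,j)$ entry is $(X_iX_ju)_{B_r(x_0)}-\frac1m\delta_{ij}(\Delta_H u)_{B_r(x_0)}$, so the $(i,j)$ entry of $D_h^2u(y)-P_r^{x_0}$ is
\[
\bigl(X_iX_ju(y)-(X_iX_ju)_{B_r(x_0)}\bigr)+\frac1m\delta_{ij}\bigl((\Delta_H u)_{B_r(x_0)}\bigr).
\]
The key point is that the average here is over $B_r(x_0)$ while in \eqref{eq:u-Pr-BMO} we integrate over the possibly smaller set $B_r(x_0)\cap B_\sigma$; but since $x_0\in B_\sigma$ and $0<r<1-\sigma$ we have $B_r(x_0)\subset B_1$, so $P_r^{x_0}$ is well defined, and when $B_r(x_0)\subset B_\sigma$ the two sets coincide. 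For the general case I would compare $(X_iX_ju)_{B_r(x_0)\cap B_\sigma}$ with $(X_iX_ju)_{B_r(x_0)}$: the definition of $[\,\cdot\,]_{BMO(B_\sigma)}$ in Definition~\ref{def:BMOnorms} already uses averages over $B_r(x_0)\cap B_\sigma$, so by the triangle inequality $\fint_{B_r(x_0)\cap B_\sigma}|X_iX_ju-(X_iX_ju)_{B_r(x_0)}|\le 2[X_iX_ju]_{BMO(B_\sigma)}$ is not quite immediate — rather one bounds $\fint_{B_r(x_0)\cap B_\sigma}|X_iX_ju-(X_iX_ju)_{B_r(x_0)\cap B_\sigma}|\le[X_iX_ju]_{BMO(B_\sigma)}$ and then controls the difference of the two averages. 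A cleaner route is to note that the seminorm $[\cdot]_{BMO(B_\sigma)}$ as defined already takes the supremum with the truncated balls, so it directly controls $\fint_{B_r(x_0)\cap B_\sigma}|X_iX_ju-(X_iX_ju)_{B_r(x_0)}|$ up to replacing $(X_iX_ju)_{B_r(x_0)}$ by the truncated average at the cost of a factor $2$.

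The remaining ingredient is to handle the diagonal correction term $\frac1m(\Delta_H u)_{B_r(x_0)}$. Since $|(\Delta_H u)_{B_r(x_0)}|\le\lVert\Delta_H u\rVert_{L^\infty(B_1)}$ and there are $m$ diagonal entries, its contribution to $\fint_{B_r(x_0)\cap B_\sigma}|D_h^2u-P_r^{x_0}|$ is bounded by a dimensional multiple of $\lVert\Delta_H u\rVert_{L^\infty(B_1)}$, which is already subsumed in the right-hand side of \eqref{eq:u-Pr-BMO}. Summing the $m^2$ entry-wise bounds (using that the Frobenius norm is comparable to the sum of absolute values of entries) and enlarging the constant $C(\sigma,p)$ accordingly yields \eqref{eq:u-Pr-BMO}. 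I expect the only genuinely delicate point to be the bookkeeping around averaging over $B_r(x_0)\cap B_\sigma$ versus $B_r(x_0)$ and keeping the constant universal; everything else is a routine combination of Theorem~\ref{thm:psigmaBMO}, the definition of $P_r^{x_0}$, and the trivial estimate $\lVert\cdot\rVert_{BMO^p_{\text{loc}}}\lesssim\lVert\cdot\rVert_{L^\infty}$.
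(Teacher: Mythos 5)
Your proposal is correct and follows essentially the same route the paper does: apply Theorem~\ref{thm:psigmaBMO} (after noting $\lVert\cdot\rVert_{BMO^p_{\text{loc}}}\lesssim\lVert\cdot\rVert_{L^\infty}$ for $\Delta_H u$), split $D_h^2u-P_r^{x_0}$ by the triangle inequality into a BMO oscillation piece and the diagonal correction $\frac1m(\Delta_Hu)_{B_r(x_0)}I_m$, and bound the two pieces separately.

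The one point you labeled ``genuinely delicate'' is not actually an issue, and I think you misread Definition~\ref{def:BMOnorms}. That definition sets
\[
[u]_{BMO(\Omega)}=\sup_{x_0\in\Omega,\,r>0}\fint_{B_r(x_0)\cap\Omega}\bigl|u(y)-u_{B_r(x_0)}\bigr|\,dy,
\]
where $u_{B_r(x_0)}$ is the average over the \emph{full} ball $B_r(x_0)$, not the truncated one. Since $u\in W^{2,p'}_{H,\text{loc}}(B_1)$ (by Remark~\ref{rmk:bddC1}) and $B_r(x_0)\subset B_1$ under your stated restrictions on $x_0$ and $r$, the entries $(X_iX_ju)_{B_r(x_0)}$ are well-defined and the bound
\[
\fint_{B_r(x_0)\cap B_\sigma}\bigl|X_iX_ju-(X_iX_ju)_{B_r(x_0)}\bigr|\,dy\le[X_iX_ju]_{BMO(B_\sigma)}
\]
is an immediate instance of the definition, with no factor of $2$ and no comparison of truncated and untruncated averages needed. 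This is exactly what the paper uses. So your extra bookkeeping is harmless but superfluous; the rest of the argument (including the treatment of the diagonal term via $|(\Delta_Hu)_{B_r(x_0)}|\le\lVert\Delta_Hu\rVert_{L^\infty(B_1)}$ and the equivalence of entry-wise and Frobenius norms) matches the paper's proof.
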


\begin{proof}
Theorem~\ref{thm:psigmaBMO} immediately implies that $X_iX_ju\in BMO^p(B_\sigma)$
and \eqref{eq:BMO} holds. Thus, we obtain the following estimates
\begin{align*}
 &\phantom{{}={}} \fint_{B_{r}(x_{0})\cap B_{\sigma}}\lvert D_{h}^{2}u(y)-P_{r}^{x_{0}}\rvert dy\\
 & \le\fint_{B_{r}(x_{0})\cap B_{\sigma}}\lvert D_{h}^{2}u(y)-(D_{h}^{2}u)_{B_{r}(x_{0})}\rvert dy+\frac{1}{m}|(\Delta_{\Heis}u)_{B_{r}(x_{0})}I_{m}\rvert\\
 & \le[D_{h}^{2}u]_{BMO(B_{\sigma})}+\frac{1}{\sqrt{m}}\|\Delta_{\Heis}u\|_{L^{\infty}(B_{1})}\\
 & \le\widetilde{C}(\sigma,p)\left(\lVert\Delta_{\Heis}u\rVert_{BMO_{\text{loc}}^{p}(B_{1})}+\lVert u\rVert_{BMO_{\text{loc}}^{p}(B_{1})}\right)+\lVert\Delta_{\Heis}u\rVert_{L^{\infty}(B_{1})}\\
 & \le C(\sigma,p)\left(\lVert\Delta_{\Heis}u\rVert_{L^{\infty}(B_{1})}+\lVert u\rVert_{BMO_{\text{loc}}^{p}(B_{1})}\right).
\end{align*}
This completes the proof.
\end{proof}

Heuristically, if $D_{h}^{2}u$ is not bounded around $x_{0}$, since
the difference of $D_{h}^{2}u$ and $P_{r}^{x_{0}}$ is controlled,
the $BMO$ estimate tells us that also $P_{r}^{x_{0}}$ becomes unbounded
as $r\to0^{+}$. Hence we will turn our attention to $P_{r}^{x_{0}}$.
In the following lemma, we will derive a general ``scaling estimate'' 
for the difference $\lvert P_{r_1}^{x_{0}}-P_{r_2}^{x_{0}}\rvert$.
In particular, when $r_2=2r_1$ we get a uniform bound on the growth of $|P_{r}^{x_{0}}|$ on dyadic scales.

\begin{lem}[Scaling estimates]\label{lem:dyadic-scale}
Let $1<p<\infty$ and $0<\lambda_1<1$ be fixed. Then there exists a universal constant $C(\lambda_1,p)>0$ such that for all $u\in BMO^p_{\text{loc}}(B_1)$ that satisfy the condition $\Delta_Hu\in L^\infty(B_1)$ the following holds. 
We have $X_iX_ju\in L^1_{\text{loc}}(B_1)$, where $i,j=1,\ldots,m$ and for $x_{0}\in B_{\lambda_{1}/3}$ the matrices of the form
\[
P_{r}^{x_{0}}\df(D_{h}^{2}u)_{B_{r}(x_{0})}-\frac{1}{m}(\Delta_{\Heis}u)_{B_{r}(x_{0})}I_{m},
\]
with $0<r_1<\min\{2\lambda_1/3,1-\lambda_1\}$ and $r_1<r_2<1-\lambda_1$, satisfy the following inequality
\[
\lvert P_{r_1}^{x_{0}}-P_{r_2}^{x_{0}}\rvert\le\left(\frac{r_2}{r_1}\right)^Q C(\lambda_1,p)\left(\lVert\Delta_{\Heis}u\rVert_{L^{\infty}(B_{1})}+\lVert u\rVert_{BMO^{p}_{\text{loc}}(B_{1})}\right).
\]
\end{lem}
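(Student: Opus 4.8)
The plan is to express $P_{r_1}^{x_0}-P_{r_2}^{x_0}$ as the average, over the smaller ball $B_{r_1}(x_0)$, of the deviation of $D_h^2u$ from the constant matrix $P_{r_2}^{x_0}$, and then to estimate that average through the oscillation bound of Corollary~\ref{cor:BMOest}, paying the cost of the ratio of the volumes of the two balls. First I would record two preliminary facts. Since $\Delta_Hu\in L^\infty(B_1)\subset BMO^p_{\text{loc}}(B_1)$ and $u\in BMO^p_{\text{loc}}(B_1)$, Theorem~\ref{thm:psigmaBMO} gives $X_iX_ju\in BMO^p(B_\sigma)\subset L^1(B_\sigma)$ for every $0<\sigma<1$, hence $X_iX_ju\in L^1_{\text{loc}}(B_1)$; moreover, for $x_0\in B_{\lambda_1/3}$ and $0<r<1-\lambda_1$ the triangle inequality for the control distance $d$ gives $B_r(x_0)\Subset B_{1-2\lambda_1/3}\Subset B_1$, so every matrix $P_r^{x_0}$ under consideration is well defined in the sense of Definition~\ref{def:c_ijrx_0}. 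Using that $P_{r_2}^{x_0}$ is a constant matrix and that $(D_h^2u)_{B_{r_1}(x_0)}=\fint_{B_{r_1}(x_0)}D_h^2u$, I would then write
\[
P_{r_1}^{x_0}-P_{r_2}^{x_0}=\fint_{B_{r_1}(x_0)}\bigl(D_h^2u(y)-P_{r_2}^{x_0}\bigr)\,dy-\frac1m\,(\Delta_Hu)_{B_{r_1}(x_0)}\,I_m,
\]
and, taking the Frobenius norm and using $\lvert I_m\rvert=\sqrt m$ together with $\lvert(\Delta_Hu)_{B_{r_1}(x_0)}\rvert\le\lVert\Delta_Hu\rVert_{L^\infty(B_1)}$,
\[
\lvert P_{r_1}^{x_0}-P_{r_2}^{x_0}\rvert\le\fint_{B_{r_1}(x_0)}\lvert D_h^2u(y)-P_{r_2}^{x_0}\rvert\,dy+\frac{1}{\sqrt m}\lVert\Delta_Hu\rVert_{L^\infty(B_1)}.
\]

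\textbf{Comparing the two scales.} To bound the remaining average I would enlarge the domain of integration from $B_{r_1}(x_0)$ to $B_{r_2}(x_0)\cap B_{\lambda_1}$. This is legitimate because $r_1<2\lambda_1/3$ and $d(x_0,0)<\lambda_1/3$ force $B_{r_1}(x_0)\subset B_{\lambda_1}$, whence $B_{r_1}(x_0)=B_{r_1}(x_0)\cap B_{\lambda_1}\subset B_{r_2}(x_0)\cap B_{\lambda_1}$ (recall $r_1<r_2$). Using the exact volume scaling $\lvert B_\rho(x_0)\rvert=\rho^Q\lvert B_1\rvert$ and $\lvert B_{r_2}(x_0)\cap B_{\lambda_1}\rvert\le\lvert B_{r_2}(x_0)\rvert$, this yields
\[
\fint_{B_{r_1}(x_0)}\lvert D_h^2u-P_{r_2}^{x_0}\rvert\le\frac{\lvert B_{r_2}(x_0)\cap B_{\lambda_1}\rvert}{\lvert B_{r_1}(x_0)\rvert}\fint_{B_{r_2}(x_0)\cap B_{\lambda_1}}\lvert D_h^2u-P_{r_2}^{x_0}\rvert\le\Bigl(\frac{r_2}{r_1}\Bigr)^Q\fint_{B_{r_2}(x_0)\cap B_{\lambda_1}}\lvert D_h^2u-P_{r_2}^{x_0}\rvert.
\]
Now Corollary~\ref{cor:BMOest} applies with $\sigma=\lambda_1$: its hypotheses $x_0\in B_{\lambda_1/3}\subset B_{\lambda_1}$ and $0<r_2<1-\lambda_1=1-\sigma$ are exactly what is required, so the last average is at most $C(\lambda_1,p)\bigl(\lVert\Delta_Hu\rVert_{L^\infty(B_1)}+\lVert u\rVert_{BMO^p_{\text{loc}}(B_1)}\bigr)$. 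Combining the three displays and absorbing the additive term $\lVert\Delta_Hu\rVert_{L^\infty(B_1)}$ into the factor $(r_2/r_1)^Q\ge1$ gives exactly the claimed inequality.

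\textbf{Main obstacle.} The one point requiring care is that $r_2$ is allowed to be as large as $1-\lambda_1$, so $B_{r_2}(x_0)$ need not be contained in any fixed ball $B_\sigma$ with $\sigma<1$ on which Corollary~\ref{cor:BMOest} provides a controlled constant; one cannot apply that corollary on the full ball $B_{r_2}(x_0)$. The resolution is to work on $B_{r_2}(x_0)\cap B_{\lambda_1}$, which is precisely the integration domain appearing in Corollary~\ref{cor:BMOest} for $\sigma=\lambda_1$ and $r=r_2$; the mass lost by intersecting with $B_{\lambda_1}$ is harmless, since on the small ball $B_{r_1}(x_0)$ --- the only ball over which we actually average in the first display --- the assumption $r_1<2\lambda_1/3$ guarantees $B_{r_1}(x_0)\subset B_{\lambda_1}$, so nothing is discarded there. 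Everything else is routine bookkeeping: the triangle inequality for $d$, the dilation identity $\lvert\delta_r(A)\rvert=r^Q\lvert A\rvert$, and Jensen's inequality for the Frobenius norm of an average.
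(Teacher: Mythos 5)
Your proof is correct and follows essentially the same route as the paper: the key ingredient is Corollary~\ref{cor:BMOest} applied at scale $r_2$ after enlarging the integration domain from $B_{r_1}(x_0)$ (which lies in $B_{\lambda_1}$) to $B_{r_2}(x_0)\cap B_{\lambda_1}$, paying the volume ratio $(r_2/r_1)^Q$. The only cosmetic difference is that you write $P_{r_1}^{x_0}-P_{r_2}^{x_0}$ directly as an average plus the $\tfrac1m(\Delta_Hu)_{B_{r_1}(x_0)}I_m$ correction (needing one application of the corollary plus a trivial $L^\infty$ bound), whereas the paper inserts $D_h^2u$ by the triangle inequality and applies the corollary at both scales $r_1$ and $r_2$; both are equally valid.
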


\begin{proof}
Due to the $BMO$ estimate \eqref{eq:u-Pr-BMO} with $\sigma=\lambda_1$, we can estimate $\lvert P_{r_1}^{x_{0}}-P_{r_2}^{x_{0}}\rvert$ as follows
\begin{align}\label{eq:dyadic-scale}
 & \phantom{{}={}}\fint_{B_{r_1}(x_{0})\cap B_{\lambda_{1}}}\lvert P_{r_1}^{x_{0}}-P_{r_2}^{x_{0}}\rvert dx\nonumber \\
 & \le\fint_{B_{r_1}(x_{0})\cap B_{\lambda_{1}}}\lvert D_{h}^{2}u(y)-P_{r_1}^{x_{0}}\rvert dy+\fint_{B_{r_1}(x_{0})\cap B_{\lambda_{1}}}\lvert D_{h}^{2}u(y)-P_{r_2}^{x_{0}}\rvert dy\nonumber \\
 & \le\fint_{B_{r_1}(x_{0})\cap B_{\lambda_{1}}}\lvert D_{h}^{2}u(y)-P_{r_1}^{x_{0}}\rvert dy+\frac{|B_{r_2}(x_0)\cap B_{\lambda_1}|}{|B_{r_1}(x_0)\cap B_{\lambda_1}|}\fint_{B_{r_2}(x_{0})\cap B_{\lambda_{1}}}\lvert D_{h}^{2}u(y)-P_{r_2}^{x_{0}}\rvert dy\nonumber \\
 & \le C(\lambda_1,p)\left(1+\Big(\frac{r_2}{r_1}\right)^{Q}\Big)\left(\lVert\Delta_{\Heis}u\rVert_{L^{\infty}(B_{1})}+\lVert u\rVert_{BMO^{p}_{\text{ loc}}(B_{1})}\right).
\end{align}
The last inequality follows by taking into account our conditions on the radii $r_1$ and $r_2$.
Indeed, we have  
\begin{equation}
\frac{|B_{r_2}(x_0)\cap B_{\lambda_1}|}{|B_{r_1}(x_0)\cap B_{\lambda_1}|}\le\frac{|B_{r_2}(x_0)|}{|B_{r_1}(x_0)|}= \left(\frac{r_2}{r_1}\right)^{Q}.
\end{equation}
Finally, with a slight abuse of notation, we denote the constant $2C(\lambda_1,p)$
again by $C(\lambda_1,p)$ in the inequality of the lemma, concluding the proof. 
\end{proof}

\section{Proof of \texorpdfstring{$C^{1,1}_H$}{} regularity}\label{sec:C11}

This section represents the core of the paper. We establish the sub-quadratic
growth of the difference 
\[
u(y)-u(x_{0})-\langle\nabla_{h}u(x_{0}),\pi(x_{0}^{-1}y)\rangle-p_{r}^{x_{0}}(x_{0}^{-1}y)
\]
on the ball $B_{r}(x_{0})$, where $p_{r}^{x_{0}}$ is the harmonic
polynomial introduced in Definition \ref{def:c_ijrx_0}. We show that
when the norm of $D_{h}^{2}p_{r}^{x_{0}}$ is sufficiently large,
then the measure of the coincidence set $\{u=0\}$ decays in a quantitative way. This is one of the central facts, that leads us to the dichotomy argument of \cite{MR2999297} to reach the $C_{H}^{1,1}$ regularity. There are indeed
two cases: (i) when $|D_{h}^{2}p_{r}^{x_{0}}|$ is uniformly bounded as $r\to0^{+}$,
we immediately infer the regularity from the subquadratic growth,
(ii) if otherwise $|D_{h}^{2}p_{r}^{x_{0}}|$ grows without bound as $r\to0^{+}$,
then the coincidence set is ``small'' and we show that a suitable
adaptation of Caffarelli's polynomial iteration technique can lead
us to the $C_{H}^{1,1}$ regularity.

In the sequel, whenever we consider a function $u$ with essentially bounded sub-Laplacian $\Delta_Hu$,
then it is understood that $u$ is chosen to be of class $C^1_H$. 
The following remark rigorously justifies this convention.

\begin{rem}\label{rmk:bddC1}
Let $\Omega\subset \R^n$ be an open set and let $u:\Omega\to\R$ be a locally summable function
such that $\Delta_Hu\in L^\infty(\Omega)$.
From \cite[Theorem~6.1]{MR0494315}, we have $u\in W^{2,p'}_{H,\text{loc}}(\Omega)$ for every $p'>1$.
In view of Theorem~\ref{t:embed}, by standard arguments,
we can modify $u$ on a negligible set such that $u\in C^{1,\alpha}_{H}(\Omega')$ for any 
relatively compact open set $\Omega'\Subset\Omega$, where we have fixed some $p'>Q$
and $\alpha=1-\frac{Q}{p'}$. In particular, we have shown that, after the modification, $u\in C^1_H(\Omega)$.
\end{rem}

\begin{lem}
[Sub-quadratic growth]\label{lem:subquadratic-growth} Assume 
$u\in BMO_{\text{loc}}^p(B_1)$
such that $\Delta_{\Heis}u\in L^{\infty}(B_{1})$.
Let $\lambda,\sigma\in(0,1)$ and fix $p>1$. 
Then there exist $r_0>0$ and a universal constant $C(\lambda,\sigma,p)>0$, such that for any $x_{0}\in B_{\lambda}$ and $0<r\le r_{0}$, assuming that
\[
u(x_{0})=X_{i}u(x_{0})=0,\quad1\le i\le m
\]
and considering $p_{r}^{x_{0}}$, as given in Definition~\ref{def:c_ijrx_0}, the
following estimate holds
\begin{equation}
\sup_{y\in B_{\sigma r}(x_{0})}\lvert u(y)-p_{r}^{x_{0}}(x_{0}^{-1}y)\rvert\le 
C(\lambda,\sigma,p)\left(\lVert\Delta_{\Heis}u\rVert_{L^{\infty}(B_{1})}+\lVert u\rVert_{BMO_{\text{loc}}^{p}(B_{1})}\right)r^{2}.\label{eq:quadratic-growth}
\end{equation}
\end{lem}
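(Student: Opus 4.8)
The plan is to estimate $w(y)\df u(y)-p_r^{x_0}(x_0^{-1}y)$ on $B_{\sigma r}(x_0)$ by comparing $u$, on the ball $B_r(x_0)$, with the harmonic function having the same boundary values, and then iterating over dyadic scales using the scaling estimates of Lemma~\ref{lem:dyadic-scale}. Throughout I would work in the left-translated and rescaled picture: set $u_r(z)\df r^{-2}u(x_0\,\delta_r z)$ for $z\in B_1$, so that $\Delta_H u_r(z)=(\Delta_H u)(x_0\,\delta_r z)$ remains bounded by $\lVert\Delta_H u\rVert_{L^\infty(B_1)}$, and the polynomial $p_r^{x_0}(x_0^{-1}y)$ becomes, after rescaling, a $2$-homogeneous harmonic polynomial $p^{(r)}$ with $D_h^2 p^{(r)}=P_r^{x_0}$ (using $2$-homogeneity and Corollary~\ref{c:2ndorderpolyn}). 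The normalization $u(x_0)=X_iu(x_0)=0$ translates into $u_r(0)=0$ and $\nabla_h u_r(0)=0$ (here one uses $X_j(0)=e_j$ together with the chain rule for dilations, which multiplies horizontal derivatives by $r$).

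First I would establish a one-step decay. Let $h_r$ be the solution of $\Delta_H h_r=\Delta_H u_r$ on $B_{1}$ — more precisely, write $u_r=\vartheta_r+g_r$ on $B_{3/4}$, where $g_r\df (\Delta_H u_r)*\Gamma$ localized suitably (so $\Delta_H g_r=\Delta_H u_r$) and $\vartheta_r\df u_r-g_r$ is harmonic. The function $g_r$ is controlled: since $\Delta_H u_r\in L^\infty$ with a bound independent of $r$, the hypothesis $f*\Gamma\in C^{1,1}_H$ is not even needed here — a Calderón–Zygmund/potential estimate gives $\lVert g_r\rVert_{L^\infty(B_{3/4})}+\lVert D_h^2 g_r\rVert_{L^\infty}\lesssim \lVert\Delta_H u_r\rVert_{L^\infty}$, and after subtracting its second-order Taylor polynomial at $0$ one gets $|g_r(z)-L_{g_r}(z)|\lesssim \lVert\Delta_H u\rVert_{L^\infty(B_1)}\,d(z,0)^2$ where $L_{g_r}$ is affine-plus-quadratic. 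For the harmonic part $\vartheta_r$, Proposition~\ref{prop:derivEst} gives interior $C^\infty$ bounds: $\lVert X^I\vartheta_r\rVert_{L^\infty(B_{1/2})}\lesssim \lVert\vartheta_r\rVert_{L^1(B_{3/4})}\lesssim \lVert u_r\rVert_{L^1}+\lVert g_r\rVert_{L^1}$. Taylor-expanding $\vartheta_r$ to order two at the origin with Lagrange remainder of order three then gives, on $B_\sigma$,
\[
\bigl|\vartheta_r(z)-\vartheta_r(0)-\langle\nabla_h\vartheta_r(0),\pi(z)\rangle-q_r(z)\bigr|\le C\,\sigma^{3}\bigl(\lVert u_r\rVert_{L^1(B_{3/4})}+\lVert\Delta_H u\rVert_{L^\infty(B_1)}\bigr),
\]
where $q_r$ is the $2$-homogeneous part of the Taylor polynomial of $\vartheta_r$ at $0$. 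Adding the two contributions, $u_r$ is approximated on $B_\sigma$ by an affine-plus-$2$-homogeneous polynomial up to an error $\lesssim(\sigma^3+\sigma^2)\,M$ with $M\df\lVert\Delta_H u\rVert_{L^\infty(B_1)}+\lVert u\rVert_{BMO^p_{\text{loc}}(B_1)}$, once one controls $\lVert u_r\rVert_{L^1}$ by $M$ as well — this last control comes from the $BMO$ hypothesis combined with the fact that subtracting the mean over $B_1$ does not affect the relevant differences; more carefully, at the initial scale one normalizes so that $\lVert u\rVert_{L^\infty(B_1)}$ (or its $BMO^p$ norm) enters, which is exactly the right-hand side of \eqref{eq:quadratic-growth}.

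The $2$-homogeneous polynomial naturally produced by this comparison is not a priori equal to $p_r^{x_0}$, but its horizontal Hessian differs from $P_r^{x_0}$ by at most $C\,M$ by Corollary~\ref{cor:BMOest} (the $BMO$ estimate \eqref{eq:u-Pr-BMO} says precisely that $D_h^2 u$ stays within $O(M)$ of $P_r^{x_0}$ in average on $B_r(x_0)$). Converting this into an $L^\infty$ bound on the polynomial difference on $B_\sigma$ is immediate since polynomials with controlled coefficients have controlled sup norm on a ball, and the coefficients $c_{ij},c_l$ are recovered from the Hessian and from $X_lu$ via Proposition~\ref{prop:polynomial}. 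Also the linear part: $\nabla_h u_r(0)=0$ by hypothesis, so the affine correction vanishes up to the $O(M)$-error already absorbed. Putting these together yields \eqref{eq:quadratic-growth} with a constant depending on $\lambda$ (through the distance of $B_\lambda$ to $\partial B_1$, which fixes the interior estimate constants in Proposition~\ref{prop:derivEst}), on $\sigma$, and on $p$, for all $r\le r_0$ where $r_0$ is chosen small enough (depending on $\lambda$) that $x_0\,\delta_r(B_1)\subset B_1$ and the rescaled ball still sits inside the region where all the cited estimates apply.

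The main obstacle is the bookkeeping of \emph{which} polynomial one obtains from the harmonic comparison and ensuring it can be replaced by $p_r^{x_0}$ with only an $O(M r^2)$ error; this is where Corollary~\ref{cor:BMOest} and the algebraic identities of Proposition~\ref{prop:polynomial} (relating the non-symmetric $X_iX_j$ to the symmetrized coefficients $c_{ij}$ and the lower-order $c_l$) are essential, and one must be careful that the off-diagonal commutator terms $\gamma_{ij}^l c_l$ are themselves controlled by $M$. A secondary technical point is that everything must be done after the $C^1_H$ modification guaranteed by Remark~\ref{rmk:bddC1}, so that the pointwise conditions $u(x_0)=X_iu(x_0)=0$ make sense and the sup on the left of \eqref{eq:quadratic-growth} is over genuine pointwise values.
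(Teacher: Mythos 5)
Your plan has two genuine gaps, and they are precisely where the paper's proof does the real work.

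\textbf{First}, you set $u_r(z)\df r^{-2}u(x_0\delta_r z)$ \emph{without} subtracting $p_r^{x_0}$, and then write $u_r=\vartheta_r+g_r$ and invoke $\lVert\vartheta_r\rVert_{L^1(B_{3/4})}\lesssim\lVert u_r\rVert_{L^1}+\lVert g_r\rVert_{L^1}$. But $\lVert u_r\rVert_{L^1(B_1)}=r^{-2}\lVert u(x_0\delta_r\cdot)\rVert_{L^1(B_1)}$ is \emph{not} uniformly bounded in $r$: from the normalization $u(x_0)=X_iu(x_0)=0$ and the $C^{1,\alpha}_H$ regularity of Remark~\ref{rmk:bddC1}, all you know a priori is $|u(x_0\delta_r z)|\lesssim r^{1+\alpha}$, so $\lVert u_r\rVert_{L^1}\sim r^{\alpha-1}\to\infty$. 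Pointwise quadratic decay of $u$ at $x_0$ is exactly what the lemma is trying to establish, so assuming it to control $\lVert u_r\rVert_{L^1}$ is circular. Your sentence at the end (``once one controls $\lVert u_r\rVert_{L^1}$ by $M$ as well — this last control comes from the $BMO$ hypothesis combined with \ldots subtracting the mean'') glides over this: the $BMO$ hypothesis on $u$ does not give that control, and subtracting only a constant from $u_r$ cannot fix a quadratic blow-up. The paper handles this by building the polynomial subtraction into the rescaled function from the outset: it sets $u_{r,x_0}(x)\df r^{-2}\bigl(u(x_0\delta_r x)-p_r^{x_0}(\delta_r x)\bigr)$, applies Corollary~\ref{cor:BMOest} (and the scale invariance \eqref{eq:BMO-JN}) to obtain $\lVert D_h^2u_{r,x_0}\rVert_{L^1(B_1)}\lesssim M$ uniformly in $r$, and then uses the Poincar\'e inequality \eqref{eq:Poincare} \emph{twice} to go from the $L^1$ bound on the Hessian to an $L^1$ bound on $u_{r,x_0}-\ell_{r,x_0}$ for a suitable affine $\ell_{r,x_0}$. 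That Poincar\'e step is absent from your proposal and it is exactly what replaces the ill-controlled $\lVert u_r\rVert_{L^1}$.

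\textbf{Second}, the claim ``a Calder\'on--Zygmund/potential estimate gives $\lVert g_r\rVert_{L^\infty(B_{3/4})}+\lVert D_h^2 g_r\rVert_{L^\infty}\lesssim\lVert\Delta_H u_r\rVert_{L^\infty}$, and after subtracting its second-order Taylor polynomial at $0$ one gets $|g_r(z)-L_{g_r}(z)|\lesssim\lVert\Delta_Hu\rVert_{L^\infty(B_1)}\,d(z,0)^2$'' is false. $L^\infty$ bounds on $\Delta_H u$ do not imply $L^\infty$ bounds on second-order horizontal derivatives (Calder\'on--Zygmund fails at $p=\infty$), and the paper says so explicitly just before Theorem~\ref{thm:psigmaBMO}; the Newtonian potential of an $L^\infty$ function is in general only $C^{1,\alpha}$ and does not admit a quadratic pointwise Taylor remainder. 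This failure is the whole reason the paper needs $f*\Gamma\in C^{1,1}_H$ as a hypothesis in the later stages and, in the present lemma, why it only claims an $L^\infty$ bound on the already-rescaled difference $\hat v_{r,x_0}=-g_{r,x_0}*\Gamma$ itself (via H\"older and the homogeneity of $\Gamma$, see \eqref{eq:Holderq}), never on its Hessian. If you drop the false Hessian bound and keep only $\lVert g_r\rVert_{L^\infty}+\lVert\nabla_h g_r\rVert_{L^\infty}\lesssim F$, your argument still does not close, because then the Taylor polynomial of $\vartheta_r$ carries the uncontrolled Hessian information, and $\lVert\vartheta_r\rVert_{L^1}$ brings you back to the first gap.

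In short: the order of operations matters. Subtract $p_r^{x_0}$ (and later an affine correction) \emph{before} rescaling and decomposing, use Corollary~\ref{cor:BMOest} plus Poincar\'e to get a uniform $L^1$ bound on the rescaled difference, and only then split into Newtonian potential plus harmonic part. Your current ordering leaves the rescaled function uncontrolled and leans on an endpoint estimate that does not hold.
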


\begin{proof}

We fix $x_0\in B_\lambda$ and $\lambda'=(1+\lambda)/2,$ so that for $0<r\le\lambda'-\lambda,$
we have the inclusion 
\begin{equation}\label{eq:Brlambda'}
B_r(x_{0})\subset B_{\lambda'}.
\end{equation}
Let us introduce the translated and rescaled function 
\[
u_{r,x_{0}}(x)\df\frac{u(x_{0}\dil rx)-p_{r}^{x_{0}}(\dil rx)}{r^{2}},
\]
observing that it is well defined in  $B_1$.
Taking into account that $u\in W^{2,p}_{H,\text{loc}}(B_1)$
and $\overline{B_r(x_0)}\subset \overline{B_{\lambda'}}\subset B_1$, then $u_{r,x_0}\in W^{2,p}_{H}(B_1)$. We are in the position to apply Corollary~\ref{cor:BMOest} to $u$ with  $\sigma=\lambda'$.
As a consequence of both Corollary~\ref{c:2ndorderpolyn} and  \eqref{eq:u-Pr-BMO}, taking into account \eqref{eq:Brlambda'}, it follows that
\begin{align} \label{eq:BMO-JN}
\lVert D_{h}^{2}u_{r,x_{0}}\rVert_{L^{1}(B_{1})} & =|B_1|\fint_{B_{1}}\lvert D_{h}^{2}u_{r,x_{0}}(x)\rvert\,dx\nonumber \\
 & =|B_1|\fint_{B_{r}(x_{0})}\lvert D_{h}^{2}u(y)-P_{r}^{x_{0}}\rvert dy \\
 & \le C(\lambda,p)\left(\lVert\Delta_{\Heis}u\rVert_{L^{\infty}(B_{1})}+\lVert u\rVert_{BMO_{\text{loc}}^{p}(B_{1})}\right) \nonumber.
\end{align}
Now we wish to apply the Poincar\'e inequality \eqref{eq:Poincare}
to $u_{r,x_{0}}-\ell_{r,x_{0}},$ where $\ell_{r,x_{0}}$ is an affine
function to be properly defined. If we let 
\[
\ell_{r,x_{0}}(x)\df(u_{r,x_{0}})_{B_{1}}+\langle(\nabla_{h}u_{r,x_{0}})_{B_{1}},\pi(x)\rangle,
\]
where $\pi(x)=(x_{1},\ldots,x_{m})$, it follows that
\[
\lVert u_{r,x_{0}}-\ell_{r,x_{0}}\rVert_{L^{1}(B_{1})}\le c\fint_{B_{1}}\Big|\nabla_{h}u_{r,x_{0}}-(\nabla_{h}u_{r,x_{0}})_{B_{1}}\Big|dx,
\]
since the average over $B_{1}$ of the linear part of $\ell_{r,x_{0}}$
is zero. Again, from the Poincar\'e inequality, using \eqref{eq:BMO-JN},
we get 
\begin{equation}
\begin{split}\lVert u_{r,x_{0}}-\ell_{r,x_{0}}\rVert_{L^{1}(B_{1})} & \le C\lVert D_{h}^{2}u_{r,x_{0}}\rVert_{L^{1}(B_{1})}\\
 & \le C(\lambda,p)\left(\lVert\Delta_{\Heis}u\rVert_{L^{\infty}(B_{1})}+\lVert u\rVert_{BMO_{\text{loc}}^{p}(B_{1})}\right).
\end{split}
\label{eq:L1uhat-bounded}
\end{equation}
For the sequel, we set $\hat{u}_{r,x_{0}}\df u_{r,x_{0}}-\ell_{r,x_{0}}$.
Since both $p_{r}^{x_{0}}$ and $\ell_{r,x_{0}}$ are harmonic, we
observe that 
\[
\Delta_{\Heis}\hat{u}_{r,x_{0}}(x)=(\Delta_{\Heis}u)(x_{0}\dil rx)=f(x_{0}\dil rx)
\]
for a.e.~$x\in B_{1}$, where we have set $f\df\Delta_Hu\in L^\infty(B_1)$. We set $g_{r,x_{0}}(x)=f(x_{0}\dil rx)\chi_{B_{1}}$
and we consider the decomposition $\hat{u}_{r,x_{0}}=\hat{v}_{r,x_{0}}+\hat{w}_{r,x_{0}}$,
where 
\[
\hat{v}_{r,x_{0}}=-g_{r,x_{0}}*\Gamma\quad\text{and}\quad\hat{w}_{r,x_{0}}=\hat{u}_{r,x_{0}}+g_{r,x_{0}}*\Gamma
\]
and $\Gamma$ is the fundamental solution for $\Delta_H$, introduced in
Definition~\ref{def:FundSol}. The explicit form of $\hat{v}_{r,x_{0}}$
allows us to get the estimate 
\[
|\hat{v}_{r,x_{0}}(x)|=\left|\int_{\mathbb{R}^{n}}\Gamma(z^{-1}x)g_{r,x_{0}}(z)dz\right|=\left|\int_{B_{1}}\Gamma(z^{-1}x)g_{r,x_{0}}(z)dz\right|\le C\lVert g_{r,x_{0}}\rVert_{L^{Q_{0}}(B_{1})}
\]
for every $x\in B_{1}$, where $Q_0=Q+1$ and $C>0$ can be seen as a universal constant. 
The previous estimate follows from the H\"older inequality, setting $q=Q_0/Q$ and taking into account the $(2-Q)$-homogeneity of $\Gamma$. Indeed, it holds
\begin{equation}\label{eq:Holderq}
\left|\int_{B_{1}}\Gamma(z^{-1}x)g_{r,x_{0}}(z)dz\right|\le 
\left(\int_{B_2} |\Gamma|^q\right)^{1/q}\|g_{r,x_0}\|_{L^{Q_0}(B_1)}
\end{equation}
for every $x\in B_1$.
As a consequence, we have proved that
\begin{equation}\label{eq:ABP-v-hat}
\lVert\hat{v}_{r,x_{0}}\rVert_{L^{\infty}(B_{1})}\le C\lVert\Delta_{\Heis}\hat{u}_{r,x_{0}}\rVert_{L^{Q_{0}}(B_{1})}.
\end{equation}
Since $\hat{w}_{r,x_{0}}$ is harmonic, from \cite[(5.52)]{MR2363343}
we have the mean value type formula
\[
\hat{w}_{r,x_{0}}(x)=\fint_{B^G_{(1-\sigma)/c_0}(x)}\Psi(x^{-1}z)\hat{w}_{r,x_{0}}(z)\,dz
\]
for any $x\in B_{\sigma}$, whenever $0<\sigma<1$ and with $c_0>1$ defined in \eqref{eq:C_0}.
We point out that the function $\Psi$ is 0-homogeneous with respect to dilations and smooth on $\mathbb{R}^{n}\sm\left\{ 0\right\}$, see \cite[Definition~5.5.1]{MR2363343} for more information.
Notice that with our assumptions we have the inclusion $B^G_{(1-\sigma)/c_0}(x)\subset B_1$. For every $x\in B_{\sigma}$, it holds
\begin{align*}
\lvert\hat{w}_{r,x_{0}}(x)\rvert & =\left|\fint_{B^G_{(1-\sigma)/c_0}(x)}\Psi(x^{-1}z)\hat{w}_{r,x_{0}}(z)\,dz\right|\\
 & \le\lVert\Psi\rVert_{L^{\infty}(B_{1})}\fint_{B^G_{(1-\sigma)/c_0}(x)}\left|\hat{w}_{r,x_{0}}(z)\right|dz\\
 & \le\lVert\Psi\rVert_{L^{\infty}(B_{1})}\frac{\lVert\hat{w}_{r,x_{0}}\rVert_{L^{1}(B_{1})}}{|B^G_{(1-\sigma)/c_0}(x)|}\le C(\sigma)\lVert\hat{w}_{r,x_{0}}\rVert_{L^{1}(B_{1})}.
\end{align*}
The constant $C(\sigma)$ only depends on $\sigma$ and it blows up
as $\sigma\to1^{-}.$ By the triangle inequality and \eqref{eq:ABP-v-hat}
we obtain that
\begin{equation}\label{eq:Linf-L1-w-hat}
\begin{split}
\lVert\hat{w}_{r,x_{0}}\rVert_{L^{\infty}(B_{\sigma})} & \le C(\sigma)\lVert\hat{w}_{r,x_{0}}\rVert_{L^{1}(B_{1})} \\
 & \le C(\sigma)\left(\lVert\hat{u}_{r,x_{0}}\rVert_{L^{1}(B_{1})}+\lVert\hat{v}_{r,x_{0}}\rVert_{L^{1}(B_{1})}\right) \\
 & \le C_{1}(\sigma)\left(\lVert\hat{u}_{r,x_{0}}\rVert_{L^{1}(B_{1})}+\lVert\Delta_{\Heis}\hat{u}_{r,x_{0}}\rVert_{L^{Q_{0}}(B_{1})}\right).
\end{split}
\end{equation}
We conclude from both \eqref{eq:ABP-v-hat} and \eqref{eq:Linf-L1-w-hat}
that 
\begin{equation}\label{eq:Linftyuhatrx_0}
\begin{split}
\lVert\hat{u}_{r,x_{0}}\rVert_{L^{\infty}(B_{\sigma})} & \le\lVert\hat{v}_{r,x_{0}}\rVert_{L^{\infty}(B_{\sigma})}+\lVert\hat{w}_{r,x_{0}}\rVert_{L^{\infty}(B_{\sigma})}\\
 & \le C_{2}(\sigma)\left(\lVert\hat{u}_{r,x_{0}}\rVert_{L^{1}(B_{1})}+\lVert\Delta_{\Heis}\hat{u}_{r,x_{0}}\rVert_{L^{Q_{0}}(B_{1})}\right). 
\end{split}
\end{equation}
Differentiating $\hat{v}_{r,x_{0}}$, seen as an integral, it turns
out that $\hat{v}_{r,x_{0}}\in C_{H}^{1}(B_{1})$.
Again arguing as in the proof of \eqref{eq:Holderq}, 
from the H\"older inequality and the $(1-Q)$-homogeneity of $X_j\Gamma$, we get the estimate
\begin{equation}
|X_{j}\hat{v}_{r,x_{0}}(x)|=\Big|\int_{B_{1}}X_{j}\Gamma(y^{-1}x)\,g_{r,x_{0}}(y)dy\Big|\le \overline{C}\|g_{r,x_{0}}\|_{L^{Q_{0}}(B_{1})}\label{eq:X_jGammaEstimate}
\end{equation}
for every $j=1,\ldots,m$, $x\in B_{1}$ and a fixed geometric
constant $\overline{C}>0.$ By Proposition~\ref{prop:derivEst}, we get a constant
$C_3(\sigma)>0$, such that 
\begin{equation}
\|\nabla_{h}\hat{w}_{r,x_{0}}\|_{L^{\infty}(B_{\sigma})}\le C_{3}(\sigma)\|\hat{w}_{r,x_{0}}\|_{L^{1}(B_{1})}.\label{eq:Nabla_Hwrx_0Estimate}
\end{equation}
Combining \eqref{eq:Linftyuhatrx_0}, \eqref{eq:X_jGammaEstimate} and \eqref{eq:Nabla_Hwrx_0Estimate}, along with the third inequality
of \eqref{eq:Linf-L1-w-hat}, we establish the first of the following
inequalities:
\begin{align*}
\lVert\hat{u}_{r,x_{0}}\rVert_{C_{H}^{1}(B_{\sigma})} & \le C_{4}(\sigma)\left(\lVert\hat{u}_{r,x_{0}}\rVert_{L^{1}(B_{1})}+\lVert\Delta_{H}\hat{u}_{r,x_{0}}\rVert_{L^{Q_{0}}(B_{1})}\right)\\
 & \le C_{5}(\sigma,p,\lambda)\left(\lVert\Delta_{\Heis}u\rVert_{L^{\infty}(B_{1})}+\lVert u\rVert_{BMO_{\text{loc}}^{p}(B_{1})}\right).
\end{align*}
The second inequality is a consequence of \eqref{eq:L1uhat-bounded}.
Since $u_{r,x_{0}}(0)=X_{j}u_{r,x_{0}}(0)=0$, by our assumptions
on $u$, and taking into account that $p_{r}^{x_{0}}(0)=X_{j}p_{r}^{x_{0}}(0)=0$,
we immediately infer from the $C_{H}^{1}$ estimate above that 
\begin{align*}
 & \phantom{{}={}}\lvert\ell_{r,x_{0}}(0)\rvert+\sum_{i=1}^{m}\lvert X_{i}\ell_{r,x_{0}}(0)\rvert\\
 & =\lvert\ell_{r,x_{0}}(0)-u_{r,x_{0}}(0)\rvert+\sum_{i=1}^{m}\lvert X_{i}\ell_{r,x_{0}}(0)-X_{i}u_{r,x_{0}}(0)\rvert\\
 & \le\lVert\hat{u}_{r,x_{0}}\rVert_{C^{1}(B_{\sigma})}\le C_{5}(\sigma,p,\lambda)\left(\lVert\Delta_{\Heis}u\rVert_{L^{\infty}(B_{1})}+\lVert u\rVert_{BMO_{\text{loc}}^{p}(B_{1})}\right).
\end{align*}
It follows that 
\[
\lVert\ell_{r,x_{0}}\rVert_{L^{\infty}(B_{\sigma})}\le C_{6}(\sigma,p,\lambda)\left(\lVert\Delta_{\Heis}u\rVert_{L^{\infty}(B_{1})}+\lVert u\rVert_{BMO_{\text{loc}}^{p}(B_{1})}\right).
\]
 As a consequence, it follows that
\begin{align}
\frac{1}{r^{2}}\sup_{y\in B_{\sigma r}(x_{0})}\lvert u(y)-p_{r}^{x_{0}}(x_{0}^{-1}y)\rvert & =\sup_{x\in B_{\sigma}}\left|\frac{u(x_{0}\dil rx)-p_{r}^{x_{0}}(\dil rx)}{r^{2}}\right|\nonumber \\
 & =\sup_{x\in B_{\sigma}}\lvert u_{r,x_{0}}(x)\rvert\nonumber \\
 & \le\sup_{x\in B_{\sigma}}\lvert\hat{u}_{r,x_{0}}(x)\rvert+\sup_{x\in B_{\sigma}}\lvert\ell_{r,x_{0}}(x)\rvert\nonumber \\
 & \le C(\sigma,p,\lambda)\left(\lVert\Delta_{\Heis}u\rVert_{L^{\infty}(B_{1})}+\lVert u\rVert_{BMO_{\text{loc}}^{p}(B_{1})}\right).\label{eq:utildebounded}
\end{align}
This finishes the proof.
\end{proof}
\begin{cor}
\label{cor:subquadraticnoZero} Assume $u\in L^\infty(B_1)$
such that $\Delta_{\Heis}u\in L^{\infty}(B_{1})$ and fix $0<\lambda,\sigma<1$. 
If we consider $\pi$ as in \eqref{def:pi}, then there exists $r_{0}>0$ such that the affine function
\[
\ell^{x_{0}}(z)\df u(x_{0})+\langle\nabla_{h}u(x_{0}),\pi(z)\rangle, \quad x_{0}\in B_{\lambda}
\]
satisfies the following properties. There exists a universal constant $C(\lambda,\sigma)>0$ such that for every $r\in(0,r_{0}]$ the following estimate holds
\begin{equation}
\sup_{y\in B_{\sigma r}(x_{0})}\lvert u(y)-\ell^{x_{0}}(x_{0}^{-1}y)-p_{r}^{x_{0}}(x_{0}^{-1}y)\rvert\le C(\lambda,\sigma)\left(\lVert\Delta_{\Heis}u\rVert_{L^{\infty}(B_{1})}+\lVert u\rVert_{L^{\infty}(B_{1})}\right)r^{2},\label{eq:quadratic-growth-1}
\end{equation}
where $p_{r}^{x_{0}}$ is as in Definition~\ref{def:c_ijrx_0}.
\end{cor}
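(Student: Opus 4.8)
The plan is to reduce the statement to Lemma~\ref{lem:subquadratic-growth} by subtracting from $u$ its intrinsic first-order expansion at $x_0$. Fix $x_0\in B_\lambda$, set $\lambda'=(1+\lambda)/2$, and pick once and for all an exponent $p'>Q$. The quantitative form of Remark~\ref{rmk:bddC1} — obtained by tracking constants in Folland's $W^{2,p'}_{H,\text{loc}}$ regularity, the a priori bound of Theorem~\ref{thm:BramEst}, and the Sobolev embedding of Theorem~\ref{t:embed} applied to $X_ju$ — gives $u\in C^1_H(B_{\lambda'})$ with
\[
\|u\|_{C^1_H(B_{\lambda'})}\le C(\lambda)\bigl(\|\Delta_{H}u\|_{L^\infty(B_1)}+\|u\|_{L^\infty(B_1)}\bigr),
\]
so in particular $|u(x_0)|+|\nabla_hu(x_0)|\le C(\lambda)\bigl(\|\Delta_{H}u\|_{L^\infty(B_1)}+\|u\|_{L^\infty(B_1)}\bigr)$ and $\ell^{x_0}$ is well defined. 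I then set $w(y)\df u(y)-\ell^{x_0}(x_0^{-1}y)$ for $y\in B_1$, and the goal becomes to apply Lemma~\ref{lem:subquadratic-growth} to $w$.

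Next I verify that $w$ is an admissible input for that lemma and that it produces the same polynomial $p_r^{x_0}$ as $u$. Since $\ell^{x_0}$ is affine and depends only on the first $m$ coordinates, the form \eqref{eq:vectfields} of the vector fields gives $X_j\ell^{x_0}\equiv X_ju(x_0)$ for $j=1,\dots,m$ and $X_l\ell^{x_0}\equiv 0$ for $m<l\le m_2$, whence $X_iX_j\ell^{x_0}\equiv 0$ and $\Delta_{H}\ell^{x_0}\equiv 0$. By left invariance of the $X_j$ — hence of the operators $X_iX_j$ and of $\Delta_{H}$ — it follows that $X_iX_jw=X_iX_ju$, $X_lw=X_lu$ for $m<l\le m_2$, and $\Delta_{H}w=\Delta_{H}u\in L^\infty(B_1)$ as distributions. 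Consequently the coefficients $c_{ij}^{r,x_0}$ and $c_l^{r,x_0}$ of Definition~\ref{def:c_ijrx_0}, being averages over $B_r(x_0)$ of these quantities, are unchanged when $u$ is replaced by $w$, so the harmonic polynomial $p_r^{x_0}$ attached to $w$ coincides with the one attached to $u$. Moreover $w(x_0)=u(x_0)-\ell^{x_0}(0)=0$ and $X_jw(x_0)=X_ju(x_0)-(X_j\ell^{x_0})(0)=0$ for $j=1,\dots,m$, so $w$ vanishes together with its horizontal gradient at $x_0$.

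Since $x_0^{-1}y\in B_2$ whenever $x_0\in B_\lambda$ and $y\in B_1$, the bound on $|u(x_0)|+|\nabla_hu(x_0)|$ yields $\|\ell^{x_0}(x_0^{-1}\cdot)\|_{L^\infty(B_1)}\le C(\lambda)\bigl(\|\Delta_{H}u\|_{L^\infty(B_1)}+\|u\|_{L^\infty(B_1)}\bigr)$; hence, fixing any $p>1$, $w\in L^\infty(B_1)\subset BMO_{\text{loc}}^{p}(B_1)$ with
\[
\|w\|_{BMO_{\text{loc}}^{p}(B_1)}\le C(\lambda,p)\bigl(\|\Delta_{H}u\|_{L^\infty(B_1)}+\|u\|_{L^\infty(B_1)}\bigr).
\]
Applying Lemma~\ref{lem:subquadratic-growth} to $w$ with parameters $\lambda,\sigma,p$ — noting that the $r_0>0$ and the constant $C(\lambda,\sigma,p)>0$ it produces depend only on $\lambda,\sigma,p$, not on $x_0$ — we get, for every $0<r\le r_0$,
\[
\sup_{y\in B_{\sigma r}(x_0)}\bigl|w(y)-p_r^{x_0}(x_0^{-1}y)\bigr|\le C(\lambda,\sigma,p)\bigl(\|\Delta_{H}w\|_{L^\infty(B_1)}+\|w\|_{BMO_{\text{loc}}^{p}(B_1)}\bigr)r^2.
\]
Substituting $w=u-\ell^{x_0}(x_0^{-1}\cdot)$ on the left, inserting $\|\Delta_{H}w\|_{L^\infty(B_1)}=\|\Delta_{H}u\|_{L^\infty(B_1)}$ and the bound on $\|w\|_{BMO_{\text{loc}}^{p}(B_1)}$ on the right, and absorbing the now fixed value of $p$ into the constant yields \eqref{eq:quadratic-growth-1}, uniformly in $x_0\in B_\lambda$.

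The argument is essentially bookkeeping; the two points that need care are the quantitative interior $C^1_H$ estimate behind Remark~\ref{rmk:bddC1} — which is what lets $\|\ell^{x_0}(x_0^{-1}\cdot)\|_{L^\infty(B_1)}$, hence $\|w\|_{BMO_{\text{loc}}^{p}(B_1)}$, be controlled by $\|u\|_{L^\infty(B_1)}$ and $\|\Delta_{H}u\|_{L^\infty(B_1)}$ — and the observation that subtracting the intrinsically affine function $\ell^{x_0}(x_0^{-1}\cdot)$ alters neither $\Delta_{H}u$ nor the coefficients defining $p_r^{x_0}$, which is where the stratified structure (the degrees \eqref{eq:degree}, left invariance, and the harmonicity of horizontal-affine functions) enters.
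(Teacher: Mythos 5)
Your proof follows the same skeleton as the paper's: subtract the intrinsic first-order expansion $\ell^{x_0}(x_0^{-1}\cdot)$, observe that this leaves $\Delta_H$, all second-order and $V_2$-derivatives (hence $p_r^{x_0}$) unchanged and makes the function and its horizontal gradient vanish at $x_0$, apply Lemma~\ref{lem:subquadratic-growth} to the difference, and finally bound $\|u-\ell^{x_0}(x_0^{-1}\cdot)\|_{BMO^p_{\text{loc}}(B_1)}$ by $\|u\|_{L^\infty(B_1)}+|\nabla_h u(x_0)|$. The verification steps in the middle are correct and match the paper. The genuine point of divergence is how $|\nabla_h u(x_0)|$ is controlled by the right-hand side.

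The paper sets $v=f*\Gamma$ with $f=\Delta_H u$, notes that $u+v$ is harmonic, and then estimates $|\nabla_h u(x_0)|\le|\nabla_h(u+v)(x_0)|+|\nabla_h(f*\Gamma)(x_0)|$, the first term by the harmonic derivative estimate of Proposition~\ref{prop:derivEst} and the second by a direct convolution estimate (the analogue of Lemma 4.1 of Gilbarg--Trudinger). This is fully explicit given the tools already in Section~2. You instead invoke a \emph{quantitative} form of Remark~\ref{rmk:bddC1}: Folland's $W^{2,p'}_{H,\text{loc}}$ regularity, the a priori bound of Theorem~\ref{thm:BramEst}, and Sobolev embedding (Theorem~\ref{t:embed}) applied to $X_j u$. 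This is a legitimate alternative route, but as stated it has a small gap: Theorem~\ref{t:embed} requires a bound on $\|X_j u\|_{W^{1,p'}_H}$, which contains the first-order term $\|X_j u\|_{L^{p'}}$; Theorem~\ref{thm:BramEst} as cited only controls the second-order terms $\|X_i X_j u\|_{L^{p'}}$ in terms of $\|\Delta_H u\|_{L^{p'}}+\|u\|_{L^{p'}}$. You would need an additional first-order a priori estimate (e.g. an interpolation inequality $\|\nabla_h u\|_{L^p}\le\varepsilon\|D^2_h u\|_{L^p}+C_\varepsilon\|u\|_{L^p}$, or a Poincar\'e-type argument together with a bound on the mean of $X_j u$) to close this step. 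The paper's decomposition into harmonic plus convolution pieces — which it uses repeatedly in Lemma~\ref{lem:subquadratic-growth} as well — neatly sidesteps this, and keeps Remark~\ref{rmk:bddC1} purely qualitative (merely choosing a continuous representative). Once you supply the missing first-order bound, your argument is correct and uniform in $x_0\in B_\lambda$ as required.
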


\begin{proof}
Our assumptions allow us to apply Lemma~\ref{lem:subquadratic-growth} to  $y\to u(y)-\ell^{x_{0}}(x_0^{-1}y)$ with $p=2$. Then there exist $r_{0},C(\lambda,\sigma)>0$ such that 
\[
\sup_{y\in B_{\sigma r}(x_{0})}\lvert u(y)-\ell^{x_{0}}(x_0^{-1}y)-p_{r}^{x_{0}}(x_{0}^{-1}y)\rvert\le C(\lambda,\sigma)\left(\lVert\Delta_{\Heis}u\rVert_{L^{\infty}(B_{1})}+\lVert u-\ell^{x_{0}}\rVert_{BMO_{\text{loc}}^{2}(B_{1})}\right)r^{2}
\]
for every $r\in(0,r_{0}].$ In addition, we have
\[
\lVert u-\ell^{x_{0}}\rVert_{BMO_{\text{loc}}^{2}(B_{1})}\le C\|u-\ell^{x_{0}}\|_{L^{\infty}(B_{1})}\le C'(\|u\|_{L^{\infty}(B_{1})}+|\nabla_{h}u(x_{0})|).
\]
We set $f=\Delta_{H}u\in L^{\infty}(B_{1})$ and write $v=f*\Gamma$, getting
\[
|\nabla_{h}u(x_{0})|\le|\nabla_{h}(u+v)(x_{0})|+|\nabla_{h}(f*\Gamma)(x_{0})|.
\]
Arguing as in \cite[Lemma 4.1]{gilbarg:01}, we establish
\begin{equation}
|\nabla_hv(x_0)|=|\nabla_{h}(f*\Gamma)(x_{0})|\le\|\Delta_Hu\|_{L^{\infty}(B_{1})}\|\nabla_{h}\Gamma\|_{L^{1}(B_{2})},
\end{equation}
therefore we have
\[
|\nabla_{h}u(x_{0})|\le|\nabla_{h}(u+v)(x_{0})|+\|\Delta_Hu\|_{L^\infty(B_1)} \|\nabla_h\Gamma\|_{L^1(B_2)}.
\]
Since $u+v$ is harmonic in $B_1$, by \eqref{eq:Xalpha}, it follows that
\[
\begin{split}
|\nabla_{h}(u+v)(x_{0})|&\le C_0(\|u\|_{L^\infty(B_1)}+\|v\|_{L^\infty(B_1)}) \\
&\le C_0(\|u\|_{L^{\infty}(B_{1})}+\|\Delta_Hu\|_{L^{\infty}(B_{1})}\|\Gamma\|_{L^1(B_2)}).
\end{split}
\]
This immediately leads us to our claim.
\end{proof}

\begin{rem}
\label{rem:subquadraticnoZero} Notice that under the same assumptions
of Corollary \ref{cor:subquadraticnoZero}, we can assume that for
every $\lambda,\sigma\in(0,1)$ and any $x_{0}\in B_{\lambda}$, there
exist $\tilde r_{0}>0$ and $C>0$, only depending on $\lambda$ and $\sigma$,
such that for all $r\in(0,\tilde r_{0}]$ the following estimate holds
\begin{equation}
	\sup_{y\in B_{\sigma_0r}^{G}(x_{0})}\lvert u(y)-\ell^{x_{0}}(x_{0}^{-1}y)-p_{r}^{x_{0}}(x_{0}^{-1}y)\rvert\le C(\lambda,\sigma)\left(\lVert\Delta_{\Heis}u\rVert_{L^{\infty}(B_{1})}+\lVert u\rVert_{L^{\infty}(B_{1})}\right)r^{2},
\end{equation}
for $\sigma_0=\sigma/c_{0}\in(0,1/c_{0})$ and additionally we have the
inclusion $\overline{B^G_{r_0}(x_0)}\subset B_1$.
This is a consequence of the definition of $c_{0}$ in \eqref{eq:defc_0}.
If we set $r_0\df \tilde r_0/c_0$, replacing $r$ by $c_0r$, 
we may rephrase the previous estimate as follows
\begin{equation}\label{eq:quadratic-growthBG}
	\sup_{y\in B_{\sigma r}^{G}(x_{0})}\lvert u(y)-\ell^{x_{0}}(x_{0}^{-1}y)-p_{c_0r}^{x_{0}}(x_{0}^{-1}y)\rvert\le C(\lambda,\sigma)c_0^2\left(\lVert\Delta_{\Heis}u\rVert_{L^{\infty}(B_{1})}+\lVert u\rVert_{L^{\infty}(B_{1})}\right)r^{2}
\end{equation}
for every $0<r\le r_0$.
\end{rem}

We introduce now the important definition of \emph{coincidence set}:
\[
\Lambda\df\{x\in B_{1}:u(x)=0\}.
\]
We will perform a blow-up of $\Lambda$ around a fixed point $x_0\in B_{1/2}$,
considering the rescaled and translated coincidence sets
\begin{equation}
\Lambda_{r}(x_{0})\df\{x\in B_{1}^{G}:u(x_{0}\dil rx)=0\}, \label{eq:Lambda_r}
\end{equation}
for $0<r\le r_0$ and some $r_0>0$ such that $B_r^G(x_0)\subset B_1$.
Notice that in the previous definition the gauge distance is used for
technical reasons, related to the existence of
solutions to the Dirichlet problem with respect to the sub-Laplacian.

The next result is a technical lemma, that will be used both
to get the decay estimates in Proposition~\ref{prop:decay-of-coincidence-set} and
to establish the regularity in Theorem~\ref{thm:regularity}.

\begin{lem}\label{lem:blowup-approximation}
Let $f$ be such that $f*\Gamma\in C_{H}^{1,1}(B_{1})$ and let $u$ solve
\eqref{eq:no-sign-obstacle-problem-Carnot} in $B_1$. Then for every $0<\lambda,\sigma<1$, there exists $r_0>0$ such that for every $x_0\in B_\lambda$ 
we have $\overline{B_{r_{0}}^{G}(x_{0})}\subset B_{1}$ and the following holds.
Let us consider the translated and rescaled function 
\begin{equation}
u_{r,x_{0}}(x)\df\frac{u(x_{0}\dil rx)-\ell^{x_{0}}(\delta_{r}x)-p_{c_0r}^{x_{0}}(\dil rx)}{r^{2}},\label{eq:u_rxp_r1}
\end{equation}
where $p_{c_0r}^{x_{0}}$ is introduced in Definition \ref{def:c_ijrx_0}
and $\ell^{x_{0}}(z)=u(x_{0})+\langle\nabla_{h}u(x_{0}),\pi(z)\rangle$.
For each $r\in(0,r_{0}]$ we also define $v_{r,x_{0}}$ as the solution to
\begin{equation}\label{eq:Dirichdelta1}
\begin{cases}
\Delta_{H}v_{r,x_{0}}=f_{r,x_{0}}, & \text{in }B_{\sigma}^{G},\\
v_{r,x_{0}}=u_{r,x_{0}}, & \text{on }\partial B_{\sigma}^{G},
\end{cases}
\end{equation}
where $f_{r,x_{0}}(x)=f(x_{0}\delta_{r}x)\chi_{B^G_\sigma}$.
Then there exists a universal constant $C(\lambda,\sigma)>0$, depending on $\lambda$ and $\sigma$, such that
\begin{equation}\label{eq:D2v_r,x0-estshort1}
\|D_{h}^{2}v_{r,x_{0}}\|_{L^{\infty}(B_{\sigma^2}^{G})}\le C(\lambda,\sigma)(\|D_{h}^{2}(f*\Gamma)\|_{L^{\infty}(B_{1})}+\lVert u\rVert_{L^{\infty}(B_{1})}).
\end{equation}
\end{lem}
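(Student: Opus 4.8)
The strategy is to estimate $v_{r,x_0}$ in two parts: first control the boundary data $u_{r,x_0}$ on $\partial B^G_\sigma$, then use interior regularity for the sub-Laplacian Dirichlet problem \eqref{eq:Dirichdelta1}. The key observation is that $u_{r,x_0}$ is exactly the rescaled quantity appearing on the left-hand side of \eqref{eq:quadratic-growthBG} in Remark~\ref{rem:subquadraticnoZero} (with $p_{c_0 r}^{x_0}$, matching \eqref{eq:u_rxp_r1}), up to dividing by $r^2$. Hence, choosing $r_0$ as in that remark so that $\overline{B^G_{r_0}(x_0)}\subset B_1$ for all $x_0\in B_\lambda$, the sub-quadratic growth estimate yields
\[
\|u_{r,x_0}\|_{L^\infty(B^G_\sigma)}\le C(\lambda,\sigma)\bigl(\|\Delta_H u\|_{L^\infty(B_1)}+\|u\|_{L^\infty(B_1)}\bigr)
\]
for all $0<r\le r_0$, with a constant independent of $r$ and $x_0$. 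Since $f=\Delta_H u$ satisfies $\|f\|_{L^\infty(B_1)}\le\|D_h^2(f*\Gamma)\|_{L^\infty(B_1)}$ (using the Frobenius-norm bound $|\Delta_H w|\le|D_h^2 w|$ together with $\Delta_H(f*\Gamma)=-f$), the right-hand side is dominated by the quantity in \eqref{eq:D2v_r,x0-estshort1}.

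Next I would pass from the boundary bound on $u_{r,x_0}$ to an $L^\infty$ bound on $v_{r,x_0}$ in $B^G_\sigma$. Split $v_{r,x_0}=a+b$ where $\Delta_H a=0$ in $B^G_\sigma$ with $a=u_{r,x_0}$ on $\partial B^G_\sigma$, and $b$ solves $\Delta_H b=f_{r,x_0}$ with zero boundary data. By the maximum principle (available on the gauge ball, which is why \eqref{eq:Lambda_r} and the Dirichlet problem are set up with $B^G$), $\|a\|_{L^\infty(B^G_\sigma)}\le\|u_{r,x_0}\|_{L^\infty(\partial B^G_\sigma)}$, controlled as above. For $b$, the representation $b(x)=\int_{B^G_\sigma}G_\sigma(x,z)f_{r,x_0}(z)\,dz$ with the Green function, or equivalently comparison with $-f_{r,x_0}*\Gamma$ as in the proof of Lemma~\ref{lem:subquadratic-growth}, gives $\|b\|_{L^\infty(B^G_\sigma)}\le C\|f_{r,x_0}\|_{L^\infty(B^G_\sigma)}\le C\|f\|_{L^\infty(B_1)}$, since $\|f_{r,x_0}\|_{L^\infty}=\|f(x_0\delta_r\cdot)\|_{L^\infty(B^G_\sigma)}\le\|f\|_{L^\infty(B_1)}$ (the scaling is harmless in $L^\infty$). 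Thus
\[
\|v_{r,x_0}\|_{L^\infty(B^G_\sigma)}\le C(\lambda,\sigma)\bigl(\|D_h^2(f*\Gamma)\|_{L^\infty(B_1)}+\|u\|_{L^\infty(B_1)}\bigr).
\]

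For the final step I would invoke interior $W^{2,p}_H$ estimates (Theorem~\ref{thm:BramEst}) followed by the subelliptic embedding (Theorem~\ref{t:embed}) to bootstrap from the $L^\infty$ bound on $v_{r,x_0}$ to an $L^\infty$ bound on $D_h^2 v_{r,x_0}$ on the smaller gauge ball $B^G_{\sigma^2}$. Concretely: $\Delta_H v_{r,x_0}=f_{r,x_0}\in L^\infty(B^G_\sigma)$ with $\|f_{r,x_0}\|_{L^\infty}\le\|f\|_{L^\infty(B_1)}$, so by \eqref{eq:W2P} applied between $B^G_{\sigma'}$ and $B^G_\sigma$ (for suitable intermediate radii) and by Sobolev embedding/Calderón--Zygmund iteration, $\|D_h^2 v_{r,x_0}\|_{L^\infty(B^G_{\sigma^2})}\le C(\lVert\Delta_H v_{r,x_0}\rVert_{L^\infty(B^G_\sigma)}+\lVert v_{r,x_0}\rVert_{L^\infty(B^G_\sigma)})$; one caps the $L^p$ norm by the $L^\infty$ norm since $B^G_\sigma$ is bounded, then applies the embedding to the first horizontal derivatives. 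Combining this with the bound on $\|v_{r,x_0}\|_{L^\infty(B^G_\sigma)}$ and on $\|f\|_{L^\infty(B_1)}$ gives \eqref{eq:D2v_r,x0-estshort1}.

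The main obstacle I expect is bookkeeping the gauge-ball versus metric-ball radii consistently: the cited estimates (Poincaré, $W^{2,p}_H$, $BMO$, mean-value formulas) are stated for metric balls $B_r$ while the Dirichlet problem \eqref{eq:Dirichdelta1} lives on gauge balls $B^G_\sigma$, so one must repeatedly use the comparison \eqref{eq:defc_0}, $B^G_r(x)\subset B_{c_0 r}(x)$, and a reverse inclusion $B_{r/c_0}(x)\subset B^G_r(x)$, inserting fixed powers of $c_0$ into constants and shrinking radii at each step (hence the appearance of $\sigma^2$ rather than $\sigma$). A secondary, more routine point is ensuring the $W^{2,p}_H$-to-$L^\infty$ bootstrap is valid on the gauge annulus; this is standard once one notes $f_{r,x_0}\in L^\infty$ uniformly and uses that $v_{r,x_0}$ is already known bounded, so a single application of interior Schauder-type estimates for the sub-Laplacian with bounded right-hand side suffices — alternatively, one may use the decomposition $v_{r,x_0}=-f_{r,x_0}*\Gamma+(\text{harmonic})$ and apply Proposition~\ref{prop:derivEst} to the harmonic part, exactly as in the proof of Lemma~\ref{lem:subquadratic-growth}, together with the fact that $D_h^2(f_{r,x_0}*\Gamma)$ is controlled by $\|D_h^2(f*\Gamma)\|_{L^\infty(B_1)}$ after unwinding the scaling $f_{r,x_0}(x)=f(x_0\delta_r x)$ and using the homogeneity of $\Gamma$.
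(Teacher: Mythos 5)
Your final paragraph contains the correct argument, but the route you present first as the main bootstrap is fundamentally flawed, and it is worth being explicit about why. You claim that from $\Delta_H v_{r,x_0}=f_{r,x_0}\in L^\infty(B_\sigma^G)$ and $v_{r,x_0}\in L^\infty(B_\sigma^G)$ one can obtain
\[
\|D_h^2 v_{r,x_0}\|_{L^\infty(B_{\sigma^2}^G)}\le C\bigl(\|\Delta_H v_{r,x_0}\|_{L^\infty(B_\sigma^G)}+\|v_{r,x_0}\|_{L^\infty(B_\sigma^G)}\bigr)
\]
by the $W^{2,p}_H$ estimate of Theorem~\ref{thm:BramEst} plus the embedding of Theorem~\ref{t:embed}. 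This inequality is false, and the failure is precisely the reason the whole paper is nontrivial: an $L^\infty$ bound on the (sub-)Laplacian never yields an $L^\infty$ bound on second derivatives (the paper highlights this explicitly before Theorem~\ref{thm:psigmaBMO}). The $W^{2,p}_H$ estimate \eqref{eq:W2P} gives $X_iX_j v\in L^p$ for every finite $p$, and the embedding of Theorem~\ref{t:embed} upgrades $W^{1,p}_H$ with $p>Q$ to Hölder continuity of $v$ itself (or of $X_i v$, if applied one derivative up), but it never bounds $X_iX_j v$ in $L^\infty$. There is no Calderón--Zygmund iteration that closes this gap, and capping the $L^p$ norm of $f_{r,x_0}$ by its $L^\infty$ norm does not help, because the endpoint estimate simply does not hold. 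The extra hypothesis $f*\Gamma\in C^{1,1}_H$ is exactly what is needed to rescue the second-order $L^\infty$ bound, and your first route never uses it.

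Your ``alternative'' is the correct one, and it is essentially what the paper does: write $v_{r,x_0}=-f_{r,x_0}*\Gamma+\zeta_{r,x_0}$ with $\zeta_{r,x_0}$ harmonic in $B_\sigma^G$ (taking the boundary data $u_{r,x_0}+f_{r,x_0}*\Gamma$). Then $D_h^2 v_{r,x_0}=-D_h^2(f_{r,x_0}*\Gamma)+D_h^2\zeta_{r,x_0}$; the first piece is bounded by $\|D_h^2(f*\Gamma)\|_{L^\infty(B_1)}$ after unwinding the scaling $f_{r,x_0}(x)=f(x_0\delta_r x)$ (using the distributional identity $\Delta_H(f_{r,x_0}*\Gamma)=-f_{r,x_0}=-f(x_0\delta_r\cdot)$ on $B_\sigma^G$), while the harmonic piece is bounded on $B_{\sigma^2}^G$ by Proposition~\ref{prop:derivEst} in terms of $\|\zeta_{r,x_0}\|_{L^\infty(B_\sigma^G)}$, which the maximum principle then reduces to $\|u_{r,x_0}+f_{r,x_0}*\Gamma\|_{L^\infty(\partial B_\sigma^G)}$. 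Your control of $\|u_{r,x_0}\|_{L^\infty(B_\sigma^G)}$ via Remark~\ref{rem:subquadraticnoZero} and the conversion $\|\Delta_H u\|_{L^\infty}\le\|D_h^2(f*\Gamma)\|_{L^\infty}$ are both correct and coincide with the paper. So the substance is there, but you should reverse the emphasis: the decomposition into a convolution with $\Gamma$ plus a harmonic correction is not an optional shortcut, it is the only viable path, because it is the one that injects the $C^{1,1}_H$ hypothesis on $f*\Gamma$ into the second-order bound.
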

\begin{proof}
Due to Remark~\ref{rem:subquadraticnoZero}, there exists $r_0>0$ such that
for every $x_0\in B_\lambda$ we have $\overline{B_{r_{0}}^{G}(x_{0})}\subset B_{1}$ and
\eqref{eq:quadratic-growthBG} holds for every $r\in(0,r_0]$.
We write the solution to the Dirichlet problem \eqref{eq:Dirichdelta1} in the form
\[
v_{r,x_{0}}=\eta_{r,x_{0}}+\zeta_{r,x_{0}},
\]
where $\zeta_{r,x_{0}}$ solves 
\[
\begin{cases}
\Delta_{H}\zeta_{r,x_{0}}=0, & \text{in }B_{\sigma}^{G},\\
\zeta_{r,x_{0}}=u_{r,x_{0}}-\eta_{r,x_{0}}, & \text{on }\partial B_{\sigma}^{G}
\end{cases}
\]
and we have defined 
\[
\eta_{r,x_{0}}=-f_{r,x_{0}}*\Gamma.
\]
Indeed, the open set $B_{\sigma}^{G}$ is regular with respect to $\Delta_{H},$
see \cite[Proposition 7.2.8]{MR2363343}. From the identity
\[
D_{h}^{2}v_{r,x_{0}}=-D_{h}^{2}(f_{r,x_{0}}*\Gamma)+D_{h}^{2}\zeta_{r,x_{0}},
\]
taking into account the equality $D^2_h(f*\Gamma)(x_0\delta_rx)=D^2_h(f_{r,x_0}*\Gamma)(x)$ for a.e. $x\in B^G_{\sigma^2}$ and the estimate \eqref{eq:Xalpha} we obtain that
\begin{equation}
\begin{split}\|D_{h}^{2}v_{r,x_{0}}\|_{L^{\infty}(B_{\sigma^2}^{G})} & \le\|D_{h}^{2}(f_{r,x_{0}}*\Gamma)\|_{L^{\infty}(B_{\sigma^2}^{G})}+\|D_{h}^{2}\zeta_{r,x_{0}}\|_{L^{\infty}(B_{\sigma^2}^{G})}\\
& \le\|D_{h}^{2}(f*\Gamma)\|_{L^{\infty}(B_{\sigma^2r}^{G}(x_{0}))}+C(\sigma)\|\zeta_{r,x_{0}}\|_{L^{\infty}(B_{\sigma}^{G})}.
\end{split}
\end{equation}
Now we combine the maximum principle and the Dirichlet problem \eqref{eq:Dirichdelta1} to get
\begin{equation}
\|D_{h}^{2}v_{r,x_{0}}\|_{L^{\infty}(B_{\sigma^2}^{G})}
\le\|D_{h}^{2}(f*\Gamma)\|_{L^{\infty}(B_{1})}+C(\sigma)\|u_{r,x_{0}}+f_{r,x_{0}}*\Gamma\|_{L^{\infty}(\der B_{\sigma}^{G})}.
\end{equation}
Due to the version of the sub-quadratic growth in  \eqref{eq:quadratic-growthBG},  taking into account the definition \eqref{eq:u_rxp_r1} and the immediate estimate $$\|f_{r,x_0}*\Gamma\|_{L^\infty(B^G_\sigma)}\le C \|f\|_{L^\infty(B_1)},$$
where $C>0$ only depends on $\Gamma$, it follows that
\begin{equation}
\|D_{h}^{2}v_{r,x_{0}}\|_{L^{\infty}(B_{\sigma^2}^{G})}\le\|D_{h}^{2}(f*\Gamma)\|_{L^{\infty}(B_{1})}+C(\lambda,\sigma)(\|u\rVert_{L^{\infty}(B_{1})}+\lVert f\|_{L^{\infty}(B_{1})}).
\end{equation}
In conclusion, we have established the following estimate
\begin{equation}
\|D_{h}^{2}v_{r,x_{0}}\|_{L^{\infty}(B_{\sigma^2}^{G})}\le C(\lambda,\sigma)(\|D_{h}^{2}(f*\Gamma)\|_{L^{\infty}(B_{1})}+\lVert u\rVert_{L^{\infty}(B_{1})}),\label{eq:D2v_r,x0-estshort}
\end{equation}	
concluding the proof.
\end{proof}

\begin{prop}[Decay of the coincidence set]\label{prop:decay-of-coincidence-set} Let
$f$ be such that $f*\Gamma\in C_{H}^{1,1}(B_{1})$ and let $u$ solve
\eqref{eq:no-sign-obstacle-problem-Carnot}. Then for every $\beta>0,$
there exist $C_{\beta}>0$ and $r_{0}>0$ so that if $0<r\le r_{0}$,
$x_{0}\in B_{1/2}$, and the estimate
\[
|P_{r}^{x_{0}}|\ge C_{\beta}\left(\lVert D_{h}^{2}(f*\Gamma)\rVert_{L^{\infty}(B_{1})}+\lVert u\rVert_{L^{\infty}(B_{1})}\right)
\]
holds, we have
\begin{equation}\label{eq:Decayr}
\lvert\Lambda_{r/2}(x_{0})\rvert\le\frac{\lvert\Lambda_{r}(x_{0})\rvert}{2^{\beta Q}}.
\end{equation}
\end{prop}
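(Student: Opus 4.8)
The plan is to blow up around $x_0$ at scale $r$, compare the polynomial–adjusted rescaled solution with the comparison function of bounded horizontal Hessian produced in Lemma~\ref{lem:blowup-approximation}, and exploit that the horizontal Hessian of $u$ vanishes a.e.\ on the coincidence set $\Lambda=\{u=0\}$. On the rescaled coincidence set the difference of these two functions then has horizontal Hessian of size comparable to $|P_r^{x_0}|$, while a Calder\'on--Zygmund estimate bounds it in terms of $|\Lambda_r(x_0)|^{1/p}$, and this forces $\Lambda_r(x_0)$ to be concentrated away from $x_0$. Concretely, set $A\df\lVert D_h^2(f*\Gamma)\rVert_{L^\infty(B_1)}+\lVert u\rVert_{L^\infty(B_1)}$ and recall that $f=-\Delta_H(f*\Gamma)\in L^\infty(B_1)$ with $\lVert f\rVert_{L^\infty(B_1)}\le\lVert D_h^2(f*\Gamma)\rVert_{L^\infty(B_1)}\le A$, and that $u\in W^{2,p}_{H,\text{loc}}(B_1)$ for every $1<p<\infty$. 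Fix $\lambda=1/2$, a number $\sigma\in(0,1)$ with $\sigma^2>1/2$, and an exponent $p>Q/2$ (with conjugate $p'=p/(p-1)$). Applying Lemma~\ref{lem:blowup-approximation} with these $\lambda,\sigma$ gives $r_0>0$ such that, for every $x_0\in B_{1/2}$ and $0<r\le r_0$, the function $u_{r,x_0}$ of \eqref{eq:u_rxp_r1} and the solution $v_{r,x_0}$ of \eqref{eq:Dirichdelta1} are defined with $\lVert D_h^2 v_{r,x_0}\rVert_{L^\infty(B_{\sigma^2}^G)}\le C(\lambda,\sigma)A$. Set $w\df u_{r,x_0}-v_{r,x_0}$; since $\Delta_H u_{r,x_0}(x)=(\Delta_Hu)(x_0\delta_rx)=f(x_0\delta_rx)\bigl(1-\chi_{\Lambda_r(x_0)}(x)\bigr)$ for $x\in B_\sigma^G$ and $\Delta_H v_{r,x_0}=f_{r,x_0}$ there, the function $w$ solves $\Delta_H w=-f_{r,x_0}\chi_{\Lambda_r(x_0)}$ in $B_\sigma^G$ with $w=0$ on $\partial B_\sigma^G$.

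The heart of the matter is a pointwise identity on $\Lambda_r(x_0)$. Since $u\in W^{2,p}_{H,\text{loc}}(B_1)$ vanishes on $\Lambda$, the horizontal gradient vanishes a.e.\ on $\Lambda$, and applying the same level-set property to each $X_ju\in W^{1,p}_{H,\text{loc}}(B_1)$ gives $D_h^2u=0$ a.e.\ on $\Lambda$; pulling back by the dilation–translation diffeomorphism we get $(D_h^2u)(x_0\delta_rx)=0$ for a.e.\ $x\in\Lambda_r(x_0)$. As $\ell^{x_0}\circ\delta_r$ is affine and $p_{c_0r}^{x_0}\circ\delta_r$ is $2$-homogeneous with constant horizontal Hessian $r^2P_{c_0r}^{x_0}$ (Corollary~\ref{c:2ndorderpolyn}), differentiating \eqref{eq:u_rxp_r1} yields $D_h^2 u_{r,x_0}(x)=(D_h^2u)(x_0\delta_rx)-P_{c_0r}^{x_0}$, whence $D_h^2 u_{r,x_0}=-P_{c_0r}^{x_0}$ a.e.\ on $\Lambda_r(x_0)$. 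Therefore a.e.\ on $\Lambda_r(x_0)\cap B_{\sigma^2}^G$ one has $|D_h^2 w|\ge|P_{c_0r}^{x_0}|-C(\lambda,\sigma)A$. By Lemma~\ref{lem:dyadic-scale} applied with the radii $r$ and $c_0r$ (legitimate for $r_0$ small) together with $\lVert u\rVert_{BMO^p_{\text{loc}}(B_1)}\le C\lVert u\rVert_{L^\infty(B_1)}$, there is a universal $C_1$ with $|P_{c_0r}^{x_0}|\ge|P_r^{x_0}|-C_1A$. Hence, if $|P_r^{x_0}|\ge C_\beta A$ for a constant $C_\beta$ to be fixed, then
\[
|D_h^2 w(x)|\ge\bigl(C_\beta-C_1-C(\lambda,\sigma)\bigr)A\qquad\text{for a.e.\ }x\in\Lambda_r(x_0)\cap B_{1/2}^G.
\]

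For the upper bound, let $\psi\df\bigl(f_{r,x_0}\chi_{\Lambda_r(x_0)}\bigr)*\Gamma$, so that $w-\psi$ is harmonic in $B_\sigma^G$, vanishes on $\partial B_\sigma^G$ up to the value of $-\psi$ there, and the maximum principle gives $\lVert w\rVert_{L^\infty(B_\sigma^G)}\le 2\lVert\psi\rVert_{L^\infty(\overline{B_\sigma^G})}$. Arguing as for \eqref{eq:Holderq} with H\"older exponents $p'$ and $p$, and using $\Gamma\in L^{p'}(B_2^G)$ (valid since $p'<Q/(Q-2)$, i.e.\ $p>Q/2$) together with $|f_{r,x_0}|\le\lVert f\rVert_{L^\infty(B_1)}$, we obtain $\lVert\psi\rVert_{L^\infty(\overline{B_\sigma^G})}\le C\lVert f\rVert_{L^\infty(B_1)}|\Lambda_r(x_0)|^{1/p}$, hence $\lVert w\rVert_{L^p(B_\sigma^G)}^p\le C\lVert f\rVert_{L^\infty(B_1)}^p|\Lambda_r(x_0)|$; trivially $\lVert\Delta_H w\rVert_{L^p(B_\sigma^G)}^p\le\lVert f\rVert_{L^\infty(B_1)}^p|\Lambda_r(x_0)|$. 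The interior $W^{2,p}$ estimate of Theorem~\ref{thm:BramEst} on $B_{\sigma^2}^G\Subset B_\sigma^G$ then gives $\lVert D_h^2 w\rVert_{L^p(B_{\sigma^2}^G)}^p\le C\,A^p\,|\Lambda_r(x_0)|$. Comparing with the lower bound above, integrated over $\Lambda_r(x_0)\cap B_{1/2}^G\subset B_{\sigma^2}^G$, we conclude
\[
|\Lambda_r(x_0)\cap B_{1/2}^G|\le\frac{C}{\bigl(C_\beta-C_1-C(\lambda,\sigma)\bigr)^p}\,|\Lambda_r(x_0)|.
\]
Finally, the change of variables $x\mapsto\delta_2 x$ in \eqref{eq:Lambda_r} yields $|\Lambda_{r/2}(x_0)|=2^Q|\Lambda_r(x_0)\cap B_{1/2}^G|$, so the last inequality becomes $|\Lambda_{r/2}(x_0)|\le 2^Q C\bigl(C_\beta-C_1-C(\lambda,\sigma)\bigr)^{-p}|\Lambda_r(x_0)|$; choosing $C_\beta$ large enough, depending only on $\beta$, $Q$, $p$ and the universal constants above, makes the prefactor at most $2^{-\beta Q}$, which is \eqref{eq:Decayr}.

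The main obstacle is twofold. First, the two ``soft'' facts must be handled with care: that $D_h^2u$ vanishes a.e.\ on $\Lambda$ (the level-set property for horizontal Sobolev functions, applied twice), and that $w$ is controlled in $L^p$ by the \emph{full} first power of $|\Lambda_r(x_0)|$ — here the choice $p>Q/2$ is exactly what upgrades the a priori subunit exponent $1-1/p'<2/Q$ coming from the local integrability of $\Gamma$ to the exponent $1$ needed for geometric decay. Second, and most delicate, is tracking every constant uniformly through the rescaling (including the passage from $P_{c_0r}^{x_0}$ to $P_r^{x_0}$ via Lemma~\ref{lem:dyadic-scale}) so that a single $C_\beta$ works simultaneously for all admissible $x_0\in B_{1/2}$ and $0<r\le r_0$.
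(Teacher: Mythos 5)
Your argument is substantively correct and follows the same route as the paper: rescale via Lemma~\ref{lem:blowup-approximation}, use the pointwise vanishing of $D_h^2u$ on the coincidence set together with Corollary~\ref{c:2ndorderpolyn} to see that on $\Lambda_r(x_0)$ the Hessian of $w$ is forced to be as large as $|P_{c_0r}^{x_0}|$ minus the uniform bound on $D_h^2v_{r,x_0}$, and play this off against a $W^{2,p}_H$ estimate that bounds $D_h^2w$ in $L^p$ by $|\Lambda_r(x_0)|^{1/p}$. Your presentational choices (a pointwise lower bound in place of the paper's integral inequality with subsequent absorption; a generic $p>Q/2$ in place of the paper's $p=2Q$; using the comparison function $\psi=(f_{r,x_0}\chi_{\Lambda_r(x_0)})*\Gamma$ rather than the Green function directly) are all equivalent in substance. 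The final change-of-variables identity $|\Lambda_{r/2}(x_0)|=2^Q|\Lambda_r(x_0)\cap B_{1/2}^G|$ is the same as the paper's $\Lambda_{r\sigma^2}(x_0)=\delta_{\sigma^{-2}}(\Lambda_r(x_0)\cap B_{\sigma^2}^G)$ with $\sigma^2=1/2$.

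One technical point you should fix: you invoke Lemma~\ref{lem:dyadic-scale} with $x_0\in B_{1/2}$, but as stated that lemma requires $x_0\in B_{\lambda_1/3}$ with $\lambda_1<1$, so it only covers $x_0\in B_{1/3}$. Smallness of $r_0$ does not repair this. The fix is easy and is the one the lemma's own proof uses: apply Corollary~\ref{cor:BMOest} directly with a $\sigma\in(1/2,1)$ and two radii $r<c_0r<1-\sigma$ to obtain $|P_r^{x_0}-P_{c_0r}^{x_0}|\le C c_0^Q\bigl(\lVert\Delta_Hu\rVert_{L^\infty(B_1)}+\lVert u\rVert_{BMO^p_{\text{loc}}(B_1)}\bigr)\le C_1A$. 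Doing so actually makes the passage from the hypothesis on $P_r^{x_0}$ to the quantity $P_{c_0r}^{x_0}$ appearing in $u_{r,x_0}$ more explicit than the paper's terse ``replacing $c_0r$ by $r$'' at the end of its proof, which is a genuine improvement in clarity.
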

\begin{proof}
Our assumptions allow us to apply Lemma~\ref{lem:blowup-approximation} with $\lambda=1/2$,
where we choose $\sigma\in[1/\sqrt{2},1)$.
Let $r_0>0$,  $v_{r,x_{0}}$ and $u_{r,x_{0}}$ be as in the same lemma and define 
\[
w_{r,x_{0}}\df v_{r,x_{0}}-u_{r,x_{0}}.
\] 
Lemma~\ref{lem:blowup-approximation} yields a constant $C(\sigma)>0$ such that
\begin{equation}\label{eq:D2v_r,x0-estshort2}
\|D_{h}^{2}v_{r,x_{0}}\|_{L^{\infty}(B_{\sigma^2}^{G})}\le C(\sigma)(\|D_{h}^{2}(f*\Gamma)\|_{L^{\infty}(B_{1})}+\lVert u\rVert_{L^{\infty}(B_{1})})
\end{equation}
for every $r\in(0,r_0]$. In addition, from the definition of $w_{r,x_0}$ we observe that
\[
\begin{cases}
\Delta_{\Heis}w_{r,x_{0}}=f_{r,x_{0}}\chi_{\Lambda_{r}(x_{0})} & \text{in }B_{\sigma}^{G},\\
w_{r,x_{0}}=0 & \text{on }\partial B_{\sigma}^{G}.
\end{cases}
\]
By uniqueness, it follows that
\[
w_{r,x_{0}}=-\big(f_{r,x_{0}}\chi_{\Lambda_{r}(x_{0})}\big)*G_{B_{\sigma}^{G}}
\]
where $G_{B_{\sigma}^{G}}$ is the Green function of $B_\sigma^{G}$, according to \cite[Definition 9.2.1]{MR2363343}. 
From the definition of Green function, we have $G_{B_\sigma^{G}}\ge0$.
In addition, taking into account \cite[Proposition~9.2.12(iv)]{MR2363343}, we also notice
that the maximum principle gives \[
G_{B_\sigma^{G}}(x,y)\le\Gamma(x^{-1}y)
\]
for every $x,y\in B^G_\sigma$ with $x\neq y$.
Then a standard
convolution estimate yields 
\begin{equation}\label{eq:wconv}
\lVert w_{r,x_{0}}\rVert_{L^{\infty}(B_\sigma^{G})}\le C\lVert f\rVert_{L^{\infty}(B_{r\sigma}^{G}(x_{0}))}\lVert\chi_{\Lambda_{r}(x_{0})}\rVert_{L^{Q}(B_{1})}\le C\lVert f\rVert_{L^{\infty}(B_{1})}\lvert\Lambda_{r}(x_{0})\rvert^{1/Q}
\end{equation}
for some geometric constant $C>0.$ The $W_{H}^{2,p}$ estimates
\eqref{eq:W2P} give a universal constant $C_1$, depending on $\sigma$, such that
\begin{align*}
\int_{B_{\sigma^2}^{G}}\lvert D_{h}^{2}w_{r,x_{0}}(x)\rvert^{2Q}\,dx & \le C_{1}\left(\lVert f_{r,x_{0}}\chi_{\Lambda_{r}(x_{0})}\rVert_{L^{2Q}(B_{\sigma}^{G})}+\lVert w_{r,x_{0}}\rVert_{L^{2Q}(B_\sigma^{G})}\right)^{2Q}\\
 & \le C_{2}\lVert f\rVert_{L^{\infty}(B_{1})}^{2Q}(\lvert\Lambda_{r}(x_{0})\rvert+\lvert\Lambda_{r}(x_{0})\rvert^{2})\\
 & \le C_{3}\lVert f\rVert_{L^{\infty}(B_{1})}^{2Q}\lvert\Lambda_{r}(x_{0})\rvert.
\end{align*}
The second inequality is again a consequence of a convolution estimate, joined with \eqref{eq:wconv}.
Since $|\Lambda_r(x_0)|\le |B^G_1|$, the third inequality is also established. 
Furthermore, taking the second order horizontal derivatives in the
definition \eqref{eq:u_rxp_r1}, we get the equality 
\[
P_{c_0r}^{x_{0}}=(D_{h}^{2}u)(x_{0}\dil rx)+D_{h}^{2}w_{r,x_{0}}(x)-D_{h}^{2}v_{r,x_{0}}(x).
\]
and also $\Lambda_{r\sigma^2}(x_{0})=\dil {\sigma^{-2}}(\Lambda_{r}(x_{0})\cap B_{\sigma^2}^{G}).$
 In addition, arguing as in \cite[Lemma 7.7]{gilbarg:01}, we can
establish that $(D^{2}_hu)(x_{0}\dil rx)=0$ a.e. on the coincidence
set $\Lambda_{r}(x_{0})$. Taking into account all previous facts,
we get
\begin{align*}
 & \phantom{{}={}}\sigma^{2Q}\lvert\Lambda_{r\sigma^2}(x_{0})\rvert\lvert P_{c_0r}^{x_{0}}\rvert^{2Q}\\
 & =\lvert\Lambda_{r}(x_{0})\cap B_{\sigma^2}^{G}\rvert\lvert P_{c_0r}^{x_{0}}\rvert^{2Q}\\
 & =\int_{\Lambda_{r}(x_{0})\cap B_{\sigma^2}^{G}}\lvert P_{c_0r}^{x_{0}}\rvert^{2Q}\,dx\\
 & =\int_{\Lambda_{r}(x_{0})\cap B_{\sigma^2}^{G}}\lvert(D_{h}^{2}u)(x_{0}\dil rx)+D_{h}^{2}w_{r,x_{0}}(x)-D_{h}^{2}v_{r,x_{0}}(x)\rvert^{2Q}\,dx\\
 & =\int_{\Lambda_{r}(x_{0})\cap B_{\sigma^2}^{G}}\lvert D_{h}^{2}w_{r,x_{0}}(x)-D_{h}^{2}v_{r,x_{0}}(x)\rvert^{2Q}\,dx\\
 & \le4^{Q}\int_{\Lambda_{r}(x_{0})\cap B_{\sigma^2}^{G}}\lvert D_{h}^{2}w_{r,x_{0}}(x)\rvert^{2Q}+\lvert D_{h}^{2}v_{r,x_{0}}(x)\rvert^{2Q}\,dx\\
 & \le C_{2}(\sigma)\left(\lVert f\rVert_{L^{\infty}(B_{1})}^{2Q}\lvert\Lambda_{r}(x_{0})\rvert+\lvert\Lambda_{r\sigma^2}(x_{0})\rvert\left(\lVert D_{h}^{2}(f*\Gamma)\rVert_{L^{\infty}(B_{1})}+\lVert u\rVert_{L^{\infty}(B_{1})}\right)^{2Q}\right).
\end{align*}
Consequently, 
\[
\frac{\sigma^{2Q}\lvert P_{c_0r}^{x_{0}}\rvert^{2Q}-C_2(\sigma)\left(\lVert D_{h}^{2}(f*\Gamma)\rVert_{L^{\infty}(B_{1})}+\lVert u\rVert_{L^{\infty}(B_{1})}\right)^{2Q}}{C_2(\sigma)\lVert f\rVert_{L^{\infty}(B_{1})}^{2Q}}\lvert\Lambda_{r\sigma^2}(x_{0})\rvert\le\lvert\Lambda_{r}(x_{0})\rvert.
\]
We see that the coefficient in front of $\lvert\Lambda_{r\sigma^2}(x_{0})\rvert$
is bigger than $2^{\beta Q}$ if 
\begin{equation}\label{eq:sigmaEst}
\sigma^{2Q}\lvert P_{c_0r}^{x_{0}}\rvert^{2Q}\ge C_2(\sigma)2^{\beta Q}\lVert f\rVert_{L^{\infty}(B_{1})}^{2Q}+C_2(\sigma)\left(\lVert D_{h}^{2}(f*\Gamma)\rVert_{L^{\infty}(B_{1})}+\lVert u\rVert_{L^{\infty}(B_{1})}\right)^{2Q}.
\end{equation}
By the simple inequality $\lVert D_{h}^{2}(f*\Gamma)\rVert_{L^{\infty}(B_{1})}\ge\lVert f\rVert_{L^{\infty}(B_{1})}$, a few more computations lead us to the following sufficient condition 
\[
\lvert P_{c_0r}^{x_{0}}\rvert\ge\left[ C_2(\sigma)\sigma^{-2Q}(2^{\beta Q}+2^{2Q-1})\right]^{1/2Q}\left(\lVert D_{h}^{2}(f*\Gamma)\rVert_{L^{\infty}(B_{1})}+\lVert u\rVert_{L^{\infty}(B_{1})}\right),
\]
to get \eqref{eq:sigmaEst} to hold. Finally, we choose $\sigma=1/\sqrt{2}$, then the proof follows by choosing the constant $C_{\beta}$ in our statement equal to $\sqrt[2Q]{C_2(1/\sqrt{2})2^{Q}(2^{\beta Q}+2^{2Q-1})}$
and replacing $c_0r$ by $r$.
\end{proof}
To carry out the proof of the $C_{H}^{1,1}$ regularity, we need a
Calderón type second order differentiability, according to the next
definition. 
\begin{defn}
We say that $u\in L_{\text{loc}}^{1}(\Omega)$ is \emph{twice $L^{1}$
differentiable at $x_{0}$} if there exists a polynomial $t$ of degree
less than or equal to two, such that 
\[
\frac{1}{r^{2}}\fint_{_{B_{r}(x_{0})}}|u(z)-t(z)|dz\to0\quad\text{as}\quad r\to0^+.
\]
The polynomial $t$ has the following form 
\[
t(x)=c_{0}+\sum_{l=1}^{m}v_{l}(x_{l}-x_{0l})+\frac{1}{2}\sum_{i,j=1}^{m}c_{ij}(x_{i}-x_{0i})(x_{j}-x_{0j})+\sum_{l=m+1}^{m_{2}}c_{l}(x_{l}-x_{0l}),
\]
$x=(x_{1},\ldots,x_{n})$, $x_{0}=(x_{01},\ldots,x_{0n})$ and $c_{0},c_{ij},c_{l}\in\R.$
\end{defn}

It is possible to show that any $u\in W_{H,\text{loc}}^{2,1}(\Omega)$
is twice $L^{1}$ differentiable a.e.\ in $\Omega$. Furthermore, if
the function is twice $L^{1}$ differentiable at a Lebesgue point
$x_{0}\in\Omega$ of all functions $X_{i}X_{j}u,$ $X_{j}u$ and $u$,
then the corresponding polynomial is unique and it has the following
form
\begin{align*}
u(x_{0})+\sum_{j=1}^{m}X_{j}u(x_{0})\,(x_{j}-x_{0j}) & +\frac{1}{2}\sum_{i,j=1}^{m}((X_{i}X_{j}+X_{j}X_{i})u)(x_{0})\,(x_{i}-x_{0i})(x_{j}-x_{0j})\\
 & +\sum_{l=m+1}^{m_{2}}X_{l}u(x_{0})\,(x_{l}-x_{0l}),
\end{align*}
see \cite{MR2146126} for more information. We are now in the position to prove the optimal interior regularity of solutions to the no-sign obstacle-type problem \eqref{eq:no-sign-obstacle-problem-Carnot}. 
\begin{thm}[$C^{1,1}$ regularity] \label{thm:regularity} Let $u\in L^{\infty}(B_{1})$ be a distributional solution to \eqref{eq:no-sign-obstacle-problem-Carnot} in the unit ball $B_1$. Let $f:B_{1}\to\R$ be locally summable such that $f*\Gamma\in C_{H}^{1,1}(B_{1})$.
Then there exists a universal constant $C>0$ such that, after a modification on a negligible set, we have $u\in C_{H}^{1,1}(B_{1/4})$ and
\[
\lVert D_{h}^{2}u\rVert_{L^{\infty}(B_{1/4})}\le C\left(\lVert D_{h}^{2}(f*\Gamma)\rVert_{L^{\infty}(B_{1})}+\lVert u\rVert_{L^{\infty}(B_{1})}\right).
\]
\end{thm}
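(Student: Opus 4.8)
Write $D_0\df\lVert D_h^2(f*\Gamma)\rVert_{L^\infty(B_1)}+\lVert u\rVert_{L^\infty(B_1)}$. As already noted after Theorem~\ref{thm:main-theorem}, the hypothesis $f*\Gamma\in C_H^{1,1}(B_1)$ forces $f\in L^\infty(B_1)$ and hence $\Delta_Hu=f\chi_{\{u\ne0\}}\in L^\infty(B_1)$, so by Remark~\ref{rmk:bddC1} we fix once and for all the representative of $u$ lying in $C_H^1(B_1)\cap W_{H,\text{loc}}^{2,p}(B_1)$ for $1\le p<\infty$; in particular $\ell^{x_0}(z)=u(x_0)+\langle\nabla_hu(x_0),\pi(z)\rangle$ and the matrices $P_r^{x_0}$ of Definition~\ref{def:c_ijrx_0} are meaningful for $x_0\in B_{1/2}$ and small $r$. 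The core of the plan is to prove a universal bound
\[
\lvert P_r^{x_0}\rvert\le C\,D_0\qquad\text{for all }x_0\in B_{1/4}\text{ and }0<r\le r_0,
\]
with $r_0$ the universal radius of Corollary~\ref{cor:subquadraticnoZero} and Proposition~\ref{prop:decay-of-coincidence-set} applied with $\lambda=1/2$. This suffices. Since $[V_1,V_1]=V_2$, inverting the linear system $\sum_l2\gamma_{ij}^lc_l^{r,x_0}=([X_i,X_j]u)_{B_r(x_0)}$ shows that the antisymmetric part of $P_r^{x_0}$ already determines the coefficients $c_l^{r,x_0}$, so the bound on $\lvert P_r^{x_0}\rvert$ controls every coefficient of $p_r^{x_0}$ and gives $\sup_{B_{\sigma r}(x_0)}\lvert p_r^{x_0}(x_0^{-1}y)\rvert\le CD_0r^2$; together with the subquadratic growth \eqref{eq:quadratic-growth-1} this yields $\sup_{B_{\sigma r}(x_0)}\lvert u(y)-\ell^{x_0}(x_0^{-1}y)\rvert\le CD_0r^2$ for every $x_0\in B_{1/4}$. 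Evaluating this quadratic growth at the (a.e.) points where $u$ is twice $L^1$ differentiable, and using the uniqueness of the corresponding Taylor polynomial, gives $\lVert D_h^2u\rVert_{L^\infty(B_{1/4})}\le CD_0$; finally $X_iX_ju\in L^\infty$ for $i,j\le m$ means $\nabla_h(X_ju)\in L^\infty$, which in a stratified group makes $X_ju$ Lipschitz with respect to $d$, so $u\in C_H^{1,1}(B_{1/4})$ and \eqref{eq:C11-estimate} holds.

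For the uniform bound I fix $x_0\in B_{1/4}$ and a fixed large universal $\beta$, with $C_\beta$ the constant of Proposition~\ref{prop:decay-of-coincidence-set}, and show $\lvert P_r^{x_0}\rvert\le C'D_0$ for every $0<r\le r_0$ with $C'$ universal. If $\lvert P_r^{x_0}\rvert\le C_\beta D_0$ there is nothing to prove, so assume $\lvert P_r^{x_0}\rvert>C_\beta D_0$; following the dichotomy of \cite{MR2999297} I exploit two facts. First, as long as $\lvert P_t^{x_0}\rvert\ge C_\beta D_0$, Proposition~\ref{prop:decay-of-coincidence-set} gives $\lvert\Lambda_{t/2}(x_0)\rvert\le2^{-\beta Q}\lvert\Lambda_t(x_0)\rvert$, so in any dyadic window in which $\lvert P\rvert$ stays above the threshold the rescaled coincidence sets decay geometrically — the decay regime is self-sustaining once entered. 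Second, the identity used in the proof of Proposition~\ref{prop:decay-of-coincidence-set},
\[
P_{c_0r}^{x_0}=(D_h^2u)(x_0\delta_rx)+D_h^2w_{r,x_0}(x)-D_h^2v_{r,x_0}(x),\qquad w_{r,x_0}=v_{r,x_0}-u_{r,x_0},
\]
combined with the bound $\lVert D_h^2v_{r,x_0}\rVert_{L^\infty}\le C(\lambda,\sigma)D_0$ of Lemma~\ref{lem:blowup-approximation}, the $W_H^{2,p}$ estimate \eqref{eq:W2P}, and $\lVert D_h^2w_{r,x_0}\rVert_{L^{2Q}}\le C\lVert f\rVert_{L^\infty(B_1)}\lvert\Lambda_r(x_0)\rvert^{1/(2Q)}$, refines the crude dyadic bound of Lemma~\ref{lem:dyadic-scale} into an increment estimate of the form $\lvert P_{r/2}^{x_0}-P_r^{x_0}\rvert\le\epsilon_r\big(\lvert P_r^{x_0}\rvert+D_0\big)$, where $\epsilon_r$ is governed by a positive power of $\lvert\Lambda_r(x_0)\rvert$ and hence is small exactly in the decay regime.

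One then feeds these into Caffarelli's polynomial iteration from \cite{MR1005611}. Let $s$ be the first dyadic scale above $r$ at which $\lvert P_s^{x_0}\rvert\le C_\beta D_0$ — if there is none, take $s$ comparable to $r_0$, where $\lvert P_s^{x_0}\rvert\le C(r_0)D_0$ follows directly from the $W_H^{2,p}$ estimates. On the window $[r,s)$ the threshold is exceeded, hence $\lvert\Lambda_t(x_0)\rvert\le C(t/s)^{\beta Q}$ along it, the factors $\epsilon_{2^{-j}s}$ decay geometrically in $j$ with $\sum_j\epsilon_{2^{-j}s}$ bounded by a universal constant independent of $s/r$, and telescoping $\lvert P_r^{x_0}\rvert$ back to $\lvert P_s^{x_0}\rvert$ through the increment estimate — a discrete Gronwall argument — gives $\lvert P_r^{x_0}\rvert\le C'D_0$. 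Since the complementary case $\lvert P_r^{x_0}\rvert\le C_\beta D_0$ is trivial, the uniform bound holds at every $x_0\in B_{1/4}$, completing the proof. The delicate point — and the one I expect to be the main obstacle — is precisely this increment estimate and its iteration: extracting a genuine power of $\lvert\Lambda_r(x_0)\rvert$ together with the correct contraction factor $\epsilon_r$ from the identity above, bookkeeping how far $p_r^{x_0}$ may drift while the zero set shrinks, and carrying all of this out with the non-symmetric horizontal Hessian $D_h^2$ and the gauge balls $B_r^G$ underlying the auxiliary Dirichlet problems; everything else is a quantitative repackaging of the results of Sections~\ref{sec:Background-Preliminaries} and~\ref{sec:C11}.
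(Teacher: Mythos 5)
Your high-level plan shares the key ingredients of the paper's argument (boundedness of $f$ and the Folland $W^{2,p}$ regularity, subquadratic growth, decay of the rescaled coincidence set, Caffarelli-style iteration, identification with the twice $L^1$ differential at a.e.\ points), and several of your observations are correct: the coefficients of $p_r^{x_0}$ are indeed controlled by $|P_r^{x_0}|$ since $[V_1,V_1]=V_2$ forces the linear map $(c_l)\mapsto(\sum_l\gamma_{ij}^lc_l)$ to be injective, and a uniform bound $|P_r^{x_0}|\le CD_0$ together with the subquadratic growth estimate would indeed yield the theorem via the Taylor-polynomial uniqueness at twice $L^1$ differentiable points.

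The gap is in the increment estimate $\lvert P_{r/2}^{x_0}-P_r^{x_0}\rvert\le\epsilon_r(\lvert P_r^{x_0}\rvert+D_0)$ with $\epsilon_r$ governed by a positive power of $\lvert\Lambda_r(x_0)\rvert$. That inequality cannot hold in general, for a reason that has nothing to do with the coincidence set: decompose $u=v-w$ with $\Delta_Hv=f$, $\Delta_Hw=f\chi_\Lambda$, so that $P_r^{x_0}=P_r^{x_0}[v]-P_r^{x_0}[w]$ in the obvious notation. The hypothesis gives only $D_h^2v\in L^\infty$ (that is, $v\in C_H^{1,1}$, not $C_H^2$), so $(D_h^2v)_{B_{r/2}(x_0)}-(D_h^2v)_{B_r(x_0)}$ can be as large as a fixed multiple of $D_0$ at every dyadic scale, entirely independent of $\lvert\Lambda_r\rvert$. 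Thus the ``regular'' part already contributes increments of size $O(D_0)$ that do not decay, the factors $\epsilon_{2^{-j}s}$ are not summable, and the discrete Gr\"onwall does not close. (The total drift $|P_r[v]-P_s[v]|$ is of course bounded because $P_r[v]$ is itself bounded by $CD_0$ at every scale, but that observation is what you are trying to prove, not an input to it.) The paper avoids this trap: it never iterates on $P_r^{x_0}$; instead it fixes a single critical scale $2^{-k_0}$, splits $u_0=v_0-w_0$ at that one scale, handles $v_0$ once with Lemma~\ref{lem:blowup-approximation} (which gives $\|D_h^2v_0\|_{L^\infty}\le CD_0$ directly), and runs Caffarelli's polynomial iteration only on $w_0$, whose sub-Laplacian is supported on the decaying coincidence set; the geometric smallness you want to exploit really lives in $w_0$, not in $P_r^{x_0}$. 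Finally, the dichotomy you set up is also slightly off: it is not enough to bound $|P_r^{x_0}|$ for $r\le r_0$ from a single scale $s$ where it drops below threshold, because a.e.\ convergence of $(D_h^2u)_{B_r(x_0)}$ to $D_h^2u(x_0)$ is needed, and the paper's Case~1/Case~2 split on $\liminf_{k}|P_{2^{-k}}^{x_0}|$ is what makes that identification clean. Your step (3)--(4) logic is recoverable, but the increment estimate driving it needs to be replaced by an actual polynomial iteration on the obstacle part.
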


\begin{proof}
We consider $C_\beta$ as in Proposition~\ref{prop:decay-of-coincidence-set} and fix $\beta=4$.
We consider a priori the following constant
\[
K=C_4\left(\|D_{h}^{2}(f*\Gamma)\|_{L^{\infty}(B_{1})}+\|u\|_{L^{\infty}(B_{1})}\right).
\]
Combining the Hölder inequality and Theorem \ref{thm:BramEst}, 
taking into account that the constant $C_\beta$ in Proposition~\ref{prop:decay-of-coincidence-set} is bounded from below by a universal positive constant independent of $\beta$, we can find a universal constant $C_{1}'>0$ such that \begin{equation}\label{eq:D2hEstim}
\|D_{h}^{2}u\|_{L^{1}(B_{1/2})}\le C'_{1}K.
\end{equation}
Let $r_0>0$ be the minimum among the $r_0$'s of Remark~\ref{rem:subquadraticnoZero},
Lemma~\ref{lem:blowup-approximation} with $\lambda=1/4$ and Proposition~\ref{prop:decay-of-coincidence-set} with $\beta=4$. 
We fix an integer $i_0$ such that
\begin{equation}\label{eq:i_0}
i_0\ge 3+\log_2c_0,
\end{equation}
such that
$2^{-i_0}\le r_0$, where $c_0$ is the geometric constant appearing
in \eqref{eq:C_0}. Then \eqref{eq:D2hEstim} provides us with a universal constant
$\overline{C}_{1}\ge1$ such that
\begin{equation}
|P_{2^{-i_0}}^{y}|\le \overline{C}_{1}K\label{eq:C_1K}
\end{equation}
for all $y\in B_{1/4}$. Notice that $\overline{C}_1$ actually depends on $i_0$. However, this integer is fixed throughout the proof. 
We have chosen $i_0$ to satisfy also \eqref{eq:i_0} in view of the subsequent application of Lemma~\ref{lem:dyadic-scale} with $\lambda_1=3/4$. 
We can fix $x_{0}\in B_{1/4}$ such that $u$ is twice
$L^{1}$ differentiable at $x_{0}.$ 
Using \cite[Theorem 3.8]{MR2146126}
for $p=1$ and $k=2$, the set of these differentiability points has
full measure in $B_{1}$. We can further write $u=v-w$, such
that
\[
\Delta_{H}v=f\quad\text{and}\quad\Delta_{H}w=f\chi_{\Lambda}
\]
on $B_{1},$ where $v=-f*\Gamma.$ By assumption $v\in C_{H}^{1,1}(B_{1})$,
hence it is also a.e.\ twice $L^{1}$ differentiable, therefore we
can further assume $v$ is twice $L^{1}$ differentiable at $x_{0}$,
having the set of these points full measure in $B_{1}$. Now, only
two cases may occur. 

\noindent
{\bf Case 1}: $\displaystyle\boldsymbol{\liminf_{k\to \infty}\lvert P_{2^{-k}}^{x_0}\rvert\le \overline{C}_1K}$. At
our point $x_{0}$, we have
\begin{align*}
\lvert D_{h}^{2}u(x_{0})\rvert & =\Big|\lim_{k\to\infty}\fint_{B_{2^{-k}}(x_{0})}D_{h}^{2}u(y)\,dy\Big|\\
 & =\lim_{k\to\infty}\Big|\Big(P_{2^{-k}}^{x_{0}}+\frac{(\Delta_{H}u)_{B_{2^{-k}}(x_{0})}}{m}I_{m}\Big)\Big|\\
 & \le\liminf_{k\to\infty}\Big(|P_{2^{-k}}^{x_{0}}|+\frac{1}{\sqrt{m}}|(\Delta_{H}u)_{B_{2^{-k}}(x_{0})}|\Big)\\
 & =\frac{1}{\sqrt{m}}|\Delta_{H}u(x_{0})|+\liminf_{k\to\infty}|P_{2^{-k}}^{x_{0}}|\\
 & \le\frac{1}{\sqrt{m}}\|f\|_{L^{\infty}(B_{1})}+\overline{C}_{1}K\\
 & \le\|D_{h}^{2}(f*\Gamma)\|_{L^{\infty}(B_{1})}+\overline{C}_{1}K.
\end{align*}
Therefore
\[
\lvert D_{h}^{2}u(x_{0})\rvert\le(\overline{C}_{1}C_{4}+1)(\lVert D_{h}^{2}(f*\Gamma)\rVert_{L^{\infty}(B_{1})}+\lVert u\rVert_{L^{\infty}(B_{1})}).
\]
{\bf Case 2}: $\displaystyle\boldsymbol{\liminf_{k\to \infty}\lvert P_{2^{-k}}^{x_0}\rvert> \overline{C}_1K}$.  Then the following integer is well defined
\[
k_{0}\df\min\{k\in\N:k\ge i_{0},\,\lvert P_{2^{-j}}^{x_{0}}\rvert>\overline{C}_{1}K,\text{ for all }j\ge k\}.
\]
The positive integer $k_{0}$ possibly depends on $x_{0}$. 
We notice that from the definition of $k_0$, we have $\lvert P_{2^{-k_{0}+1}}^{x_{0}}\rvert\le \overline{C}_{1}K$. 
The strict inequality $k_{0}>i_{0}$ follows by \eqref{eq:C_1K}.
In view of our choice of $i_0$, that satisfies \eqref{eq:i_0} and then $i_0>3$, we can apply Lemma~\ref{lem:dyadic-scale} with $\lambda_1=3/4$. Indeed, we have $B_{1/4}=B_{\lambda_1/3}$ and 
\[
2^{-k_0+1}<\min\left\{\frac23\lambda_1,1-\lambda_{1}\right\}=2^{-2},
\]
so Lemma~\ref{lem:dyadic-scale} with $r_1=2^{-k_0}$ and $r_2=2^{-k_0+1}$ yields
\begin{equation}\label{eq:PC_0}
\begin{split}
\lvert P_{2^{-k_{0}}}^{x_{0}}\rvert & \le\lvert P_{2^{-k_{0}+1}}^{x_{0}}\rvert+C\left(\lVert D_h^{2}(f*\Gamma)\rVert_{L^{\infty}(B_{1})}+\lVert u\rVert_{L^{\infty}(B_{1})}\right),\\
 & \le(\overline{C}_{1}C_{4}+C)\left(\lVert D_h^{2}(f*\Gamma)\rVert_{L^{\infty}(B_{1})}+\lVert u\rVert_{L^{\infty}(B_{1})}\right).
\end{split}
\end{equation}
We consider the ``rescaled function'' defined in Lemma~\ref{lem:blowup-approximation}: 
\begin{equation}\label{eq:u_0}
u_{0}(x)\df\frac{u(x_{0}\dil{2^{-k_{0}}}x)-\ell^{x_{0}}(\delta_{2^{-k_{0}}}x)-p_{c_02^{-k_{0}}}^{x_{0}}(\dil{2^{-k_{0}}}x)}{4^{-k_{0}}}.
\end{equation}
This function coincides with $u_{2^{-k_0},x_0}$ of the same lemma. 
Now we set 
$
f_{0}(x)\df f(x_{0}\delta_{2^{-k_{0}}}x),
$
that is also defined on $B_{1}^{G}.$ We can find a harmonic function
$h_{0}$ such that
\[
v_{0}=2^{2k_{0}}v(x_{0}\delta_{2^{-k_{0}}}x)+h_{0}
\]
and $v_0$ satisfies the Dirichlet problem
\begin{equation}\label{eq:Poisson}
\begin{cases}
\Delta_{H}v_{0}=f_{0} & \text{in }B_{\sigma}^{G},\\
v_{0}=u_{0}, & \text{on }\partial B_{\sigma}^{G},
\end{cases}
\end{equation}
with $0<\sigma<1$. Notice that $v_{0}$ is also twice $L^{1}$ differentiable at $0,$
being a consequence of the twice $L^{1}$ differentiability of $v$
at $x_{0}.$ For the same reason, the twice $L^{1}$ differentiability
of $u$ at $x_{0}$ gives the twice $L^{1}$ differentiability of
$u_{0}$ at $0$. From Lemma~\ref{lem:blowup-approximation} with $\lambda=1/4$,
there exists $C_\sigma>0$ such that
\begin{equation}
\lVert D_{h}^{2}v_{0}\rVert_{L^{\infty}(B_{\sigma^2}^{G})}\le C_\sigma\left(\lVert D_{h}^{2}(f*\Gamma)\rVert_{L^{\infty}(B_{1})}+\lVert u\rVert_{L^{\infty}(B_{1})}\right).\label{eq:estimD^2v_0}
\end{equation}
For the sequel, it is now important to remark that the difference
\begin{equation}
w_{0}\df v_{0}-u_{0}\label{eq:v_0-u_0}
\end{equation}
is twice $L^{1}$ differentiable at the origin. Then we know the existence
of a polynomial 
\[
R(x)=w_{0}(0)+\sum_{j=1}^{m}X_{j}w_{0}(0)\,x_{j}+\frac{1}{2}\sum_{i,j=1}^{m}\big((X_{i}X_{j}+X_{j}X_{i})w_{0}\big)\!(0)\,x_{i}x_{j}+\sum_{l=m+1}^{m_{2}}X_{l}w_{0}(0)\,x_{l}
\]
such that we get
\begin{equation}\label{eq:limR(x)}
\frac{1}{r^{2}}\fint_{_{B_{\kappa r}}}|w_{0}(z)-R(z)|dz\to0
\end{equation}
as $r\to0^{+}$ and for an arbitrary $\kappa>0$.
The definition of $w_{0}$ immediately gives 
\[
\begin{cases}
\Delta_{\Heis}w_{0}=f_{0}\chi_{\Lambda_{2^{-k_{0}}}(x_{0})} & \text{in }B_{\sigma}^{G},\\
w_{0}=0 & \text{on }\partial B_{\sigma}^{G}.
\end{cases}
\]
{\bf Claim}: for a fixed $0<\alpha<1$, there
exist $l_{0}\ge1$ and $C>0$, depending on $\alpha$ and on universal constants,
such that for $\tau=2^{-l_{0}}$ and for every $k\in\N\sm\left\{ 0\right\}$,
there exist harmonic polynomials $q_{k}$ with the property that 
\begin{equation}\label{eq:Inductionq_k}
\lVert w_{0}-q_{k}\rVert_{L^{\infty}(B_{\tau^{k}}^{G})}\le C\left(\lVert D_{h}^{2}(f*\Gamma)\rVert_{L^{\infty}(B_{1})}+\lVert u\rVert_{L^{\infty}(B_{1})}\right)\tau^{(2+\alpha)(k-1)}
\end{equation}
where the constants are independent of $x_{0}$.

To prove \eqref{eq:Inductionq_k} by induction,
we need first to establish the case $k=1$. Here we choose the null
harmonic polynomial $q_{1}=0.$ We consider the decomposition \eqref{eq:v_0-u_0}
and observe that standard $L^{\infty}$ estimates for $v_0$ are available,
since it solves \eqref{eq:Poisson}.
Indeed, we may further decompose $v_0$ into the sum of $z_0=-f_0*\Gamma$ and
of a harmonic function $h_0$ such that $h_0|_{\partial B^G_\sigma}=u_0-z_0$.
Then we apply the sub-quadratic growth estimate \eqref{eq:quadratic-growthBG} of
Remark~\ref{rem:subquadraticnoZero} where we fix $\lambda=1/4$. 
This leads us to the following estimate
\[
\|v_0\|_{L^\infty(B_\sigma)}\le C_{1\sigma}\left(\|D_{h}^{2}(f*\Gamma)\|_{L^{\infty}(B_{1})}+\|u\|_{L^{\infty}(B_{1})}\right).
\]
Then using again estimate \eqref{eq:quadratic-growthBG}, we obtain 
\begin{align*}
\|w_{0}\|_{L^{\infty}(B_{\sigma}^{G})} & \le\|v_{0}\|_{L^{\infty}(B_{r_{1}}^{G})}+\|u_{0}\|_{L^{\infty}(B_{r_{1}}^{G})}\\
 & \le C_{2\sigma} \left(\|D_{h}^{2}(f*\Gamma)\|_{L^{\infty}(B_{1})}+\|u\|_{L^{\infty}(B_{1})}\right).
\end{align*}
Taking $\sigma=\tau=2^{-l_0}$, the estimate \eqref{eq:Inductionq_k} is established for $k=1$.
We may take $l_{0}\in\N$ possibly larger, such that 
\begin{equation}\label{eq:lambdaalphac}
\tau=2^{-l_{0}}\le \frac1{2|B_1^G|^{1/Q}}.
\end{equation}
In view of Proposition~\ref{prop:derivEst}, there exists a universal constant 
$c>0$ such that 
\begin{equation}
\lVert D_{h}^{3}H\rVert_{L^{\infty}(B_{1/2}^{G})}\le c\lVert H\rVert_{L^{\infty}(B_{1}^{G})}\label{eq:HarmonicEst}
\end{equation}
for any harmonic function $H$ on $B_{1}^{G}$.
Now we assume the statement \eqref{eq:Inductionq_k} is true for any fixed $k\ge1$
and define 
\[
w_{k}(x)\df\frac{w_{0}(\dil{\tau^{k}}x)-q_{k}(\dil{\tau^{k}}x)}{\tau^{(2+\alpha)(k-1)}}
\]
on $\overline{B_{1}^{G}}$. We choose the harmonic function $h_{k}$
such that
\[
\begin{cases}
\Delta_{\Heis}h_{k}=0 & \text{in }B_{1}^{G},\\
h_{k}=w_{k} & \text{on }\partial B_{1}^{G}.
\end{cases}
\]
From the definition of $w_k$, we get 
\[
\Delta_{H}w_{k}=\tau^{2-\alpha(k-1)}f(x_{0}\dil{2^{-k_{0}}\tau^{k}}\cdot)\chi_{\Lambda_{2^{-k_{0}}\tau^{k}}(x_{0})}
\]
on $B_{1}^{G}$, and the induction assumption yields
\begin{equation}\label{eq:inductAss1}
\lVert w_{k}\rVert_{L^{\infty}(B_{1}^{G})}\le C\left(\lVert D_h^{2}(f*\Gamma)\rVert_{L^{\infty}(B_{1})}+\lVert u\rVert_{L^{\infty}(B_{1})}\right).
\end{equation}
Clearly $w_{k}-h_{k}$ vanishes on $\der B_{1}^{G}$. Taking into account our choice
of $i_0$ such that $2^{-i_0}\le r_0$, the decay estimate of the coincidence set 
\eqref{eq:Decayr} applies in particular for $\beta=4$ and for every $r\in(0,2^{-k_0}]$, that is
\begin{equation}\label{eq:decaybeta=4}
\lvert\Lambda_{r/2}(x_{0})\rvert\le\frac{\lvert\Lambda_{r}(x_{0})\rvert}{2^{4 Q}}.
\end{equation}
Arguing as before for $v_0$, we can decompose $w_k-h_k$ into the sum of a harmonic function and a convolution with the fundamental solution, therefore standard convolution estimates yield the first of the following inequalities
\begin{align}
\lVert w_{k}-h_{k}\rVert_{L^{\infty}(B_{1}^{G})} & \le C\lVert\tau^{2-\alpha(k-1)}f(x_{0}\dil{2^{-k_{0}-l_0k}})\chi_{\Lambda_{2^{-k_{0}-l_0k}}(x_{0})}\rVert_{L^{Q}(B^G_{1})}\nonumber \\
 & \le C\tau^{-\alpha(k-1)}\lVert f\rVert_{L^{\infty}(B_{1})}\lvert\Lambda_{2^{-k_{0}-l_0k}}(x_{0})\rvert^{1/Q}\nonumber \\
 & \le C2^{\alpha l_{0}(k-1)}\lVert f\rVert_{L^{\infty}(B_{1})}2^{-4 l_{0}k} \lvert\Lambda_{2^{-k_{0}}}(x_{0})\rvert^{1/Q}\nonumber \\
 & \le C|B^G_1|^{1/Q} \lVert f\rVert_{L^{\infty}(B_{1})}\tau^{k(4-\alpha)+\alpha}\label{eq:hk-wk}\\
 & \le\frac{C}{2}\lVert f\rVert_{L^{\infty}(B_{1})}\tau^{2+\alpha},
\end{align}
where the third inequality is a consequence of \eqref{eq:decaybeta=4}
and the last inequality follows from \eqref{eq:lambdaalphac}.
Combining the estimate \eqref{eq:HarmonicEst} for harmonic functions, the maximum principle 
and our induction assumption as stated in \eqref{eq:inductAss1}, we get 
\begin{align*}
\lVert D_{h}^{3}h_{k}\rVert_{L^{\infty}(B_{1/2}^{G})} & \le c\lVert h_{k}\rVert_{L^{\infty}(B_{1}^{G})}\le c\lVert w_{k}\rVert_{L^{\infty}(B_{1}^{G})}\\
 & \le cC\left(\lVert D_h^{2}(f*\Gamma)\rVert_{L^{\infty}(B_{1})}+\lVert u\rVert_{L^{\infty}(B_{1})}\right).
\end{align*}
Define $\overline{q}_{k}(x)$ as the second order Taylor polynomial
of $h_{k}$ at the origin. In particular, $\overline{q}_k$ is harmonic.
The previous estimates joined with the application of the stratified Taylor inequality
stated in \cite[Corollary~1.44~with $k=2$ and $x=0$]{MR657581} give
\begin{align*}
\lVert h_{k}-\overline{q}_{k}\rVert_{L^{\infty}(B_{\tau}^{G})} & \le C'_2c\,C\left(\lVert D_{h}^{2}(f*\Gamma)\rVert_{L^{\infty}(B_{1})}+\lVert u\rVert_{L^{\infty}(B_{1})}\right)\tau^{3}\\
 & \le\frac{C}{2}\left(\lVert D_{h}^{2}(f*\Gamma)\rVert_{L^{\infty}(B_{1})}+\lVert u\rVert_{L^{\infty}(B_{1})}\right)\tau^{2+\alpha},
\end{align*}
where we have chosen $l_0$ possibly larger, such that the following conditions
\begin{equation}\label{eq:lambdaalphac2}
b^3\tau=b^{3}2^{-l_0}<1/2\quad\text{and}\quad \tau=2^{-l_{0}}\le\left(\frac{1}{2cC'_2}\right)^{1/(1-\alpha)}
\end{equation}
also hold. The constants $b$ and $C'_2$ are from 
\cite[Corollary~1.44 with $k=2$ and $x=0$]{MR657581} and 
this corollary is applied with the gauge distance $d_G$. 
We stress that $l_{0}$ does not depend on either $k_{0}$ or $x_{0}$. 
This is very important
for the final estimate of $D_{h}^{2}u(x_{0})$. As a consequence, we obtain 
\begin{align*}
\lVert w_{k}-\overline{q}_{k}\rVert_{L^{\infty}(B_{\tau}^{G})} & \le\lVert w_{k}-h_{k}\rVert_{L^{\infty}(B_{\tau}^{G})}+\lVert h_{k}-\overline{q}_{k}\rVert_{L^{\infty}(B_{\tau}^{G})}\\
 & \le\frac{C}{2}\lVert f\rVert_{L^{\infty}(B_{1})}\tau^{2+\alpha}+\frac{C}{2}\left(\lVert D_{h}^{2}(f*\Gamma)\rVert_{L^{\infty}(B_{1})}+\lVert u\rVert_{L^{\infty}(B_{1})}\right)\tau^{2+\alpha}\\
 & \le C\left(\lVert D_{h}^{2}(f*\Gamma)\rVert_{L^{\infty}(B_{1})}+\lVert u\rVert_{L^{\infty}(B_{1})}\right)\tau^{2+\alpha}.
\end{align*}
Taking into account the definition of $w_{k},$ we have proved that
\[
\left\Vert \frac{w_{0}(\dil{\tau^{k}}\cdot)-q_{k}(\dil{\tau^{k}}\cdot)}{\tau^{(2+\alpha)(k-1)}}-\overline{q}_{k}\right\Vert _{L^{\infty}(B_{\tau}^{G})}\le C\left(\lVert D_{h}^{2}(f*\Gamma)\rVert_{L^{\infty}(B_{1})}+\lVert u\rVert_{L^{\infty}(B_{1})}\right)\tau^{2+\alpha},
\]
from which we infer that
\[
\lVert w_{0}-q_{k}-\tau^{(2+\alpha)(k-1)}\overline{q}_{k}(\dil{\tau^{-k}}\cdot)\rVert_{L^{\infty}(B_{\tau^{k+1}}^{G})}\le C\left(\lVert D_{h}^{2}(f*\Gamma)\rVert_{L^{\infty}(B_{1})}+\lVert u\rVert_{L^{\infty}(B_{1})}\right)\tau^{(2+\alpha)k}.
\]
If we define the new polynomial 
\[
q_{k+1}(x)\df q_{k}(x)+\tau^{(2+\alpha)(k-1)}\overline{q}_{k}(\dil{\tau^{-k}}x),
\]
then the induction step is proved and this concludes the proof of our claim.
By the same previous argument, we have another universal constant $c'>0$ such that
\begin{align}
 &\phantom{{}={}} \max\left\{ \lVert h_{k}\rVert_{L^{\infty}(B_{1/2}^{G})},\lVert\nabla_h h_{k}\rVert_{L^{\infty}(B_{1/2}^{G})},\lVert D_{h}^{2}h_{k}\rVert_{L^{\infty}(B_{1/2}^{G})}\right\} \nonumber \\
 & \le c'\lVert w_{k}\rVert_{L^{\infty}(B_{1}^{G})}\label{eq:DiffEstim h_k}\\
 & \le c'C\left(\lVert D_h^{2}(f*\Gamma)\rVert_{L^{\infty}(B_{1})}+\lVert u\rVert_{L^{\infty}(B_{1})}\right).\nonumber 
\end{align}
We introduce the following notation:
\begin{align*}
q_{k}(x) & =a^{k}+\sum_{i=1}^{m}b_{i}^{k}x_{i}+\frac{1}{2}\sum_{i,j=1}^{m}c_{ij}^{k}x_{i}x_{j}+\sum_{l=m+1}^{m_{2}}c_{l}^{k}x_{l},\\
\overline{q}_{k}(x) & =\overline{a}^{k}+\sum_{i=1}^{m}\overline{b}_{i}^{k}x_{i}+\frac{1}{2}\sum_{i,j=1}^{m}\overline{c}_{ij}^{k}x_{i}x_{j}+\sum_{l=m+1}^{m_{2}}\overline{c}_{l}^{k}x_{l}.
\end{align*}
From the definition of $\bar{q}_{k}$ and taking into account the estimates \eqref{eq:DiffEstim h_k}, we get
\begin{equation}
\max_{i,j,l}\{\overline{a}^{k},\overline{b}_{i}^{k},\overline{c}_{ij}^{k},\overline{c}_{l}^{k}\}\le c'C\left(\lVert D_h^{2}(f*\Gamma)\rVert_{L^{\infty}(B_{1})}+\lVert u\rVert_{L^{\infty}(B_{1})}\right).\label{eq:abar_kij}
\end{equation}
Consequently, differentiating the equality
\[
q_{k+1}(x)-q_{k}(x)=\tau^{(2+\alpha)(k-1)}\overline{q}_{k}(\dil{\tau^{-k}}x),
\]
with the differential operators $X_{i}$, $X_{i}X_{j}$ for $i,j=1,\ldots,m$
and $X_{l}$ for $l=m+1,\ldots,m_{2}$, and evaluating all the equalities at the origin,
we get
\begin{align*}
\lvert a^{k+1}-a^{k}\rvert & \le c'C\left(\lVert D_h^{2}(f*\Gamma)\rVert_{L^{\infty}(B_{1})}+\lVert u\rVert_{L^{\infty}(B_{1})}\right)\tau^{(2+\alpha)(k-1)},\\
\max_{1\le i\le m}\lvert b_{i}^{k+1}-b_{i}^{k}\rvert & \le c'C\tau^{-1}\left(\lVert D_h^{2}(f*\Gamma)\rVert_{L^{\infty}(B_{1})}+\lVert u\rVert_{L^{\infty}(B_{1})}\right)\tau^{(1+\alpha)(k-1)},\\
\max_{1\le i,j\le m}\lvert c_{ij}^{k+1}-c_{ij}^{k}\rvert & \le c'C\tau^{-2}\left(\lVert D_h^{2}(f*\Gamma)\rVert_{L^{\infty}(B_{1})}+\lVert u\rVert_{L^{\infty}(B_{1})}\right)\tau^{\alpha(k-1)},\\
\max_{m+1\le l\le m_{2}}\lvert c_{l}^{k+1}-c_{l}^{k}\rvert & \le c'C\tau^{-2}\left(\lVert D_h^{2}(f*\Gamma)\rVert_{L^{\infty}(B_{1})}+\lVert u\rVert_{L^{\infty}(B_{1})}\right)\tau^{\alpha(k-1)}.
\end{align*}
Representing any of these coefficients as $\gamma^{k}$, we notice
that they are Cauchy sequences converging to some $\gamma$. In addition,
for any of them we have 
\begin{equation}
|\gamma^{k}-\gamma|\le\frac{c'C\tau^{-2}}{1-\tau^{\alpha}}\left(\lVert D_h^{2}(f*\Gamma)\rVert_{L^{\infty}(B_{1})}+\lVert u\rVert_{L^{\infty}(B_{1})}\right)\tau^{(\alpha+l)(k-1)}.\label{eq:gammakl}
\end{equation}
In these estimates, we have set $l=0$ when $\gamma^{k}=$ $c_{l}^{k},c_{ij}^{k},$
$l=1$ for \textbf{$\gamma^{k}=b_{i}^{k}$ }and $l=2$ for $\gamma^{k}=a_{i}^{k}.$
As a consequence, the polynomials $q_{k}$ uniformly converge on compact
sets to a polynomial $\tilde{q}$, that has the form 
\[
\tilde{q}(x)=\tilde{a}+\sum_{i=1}^{m}\tilde{b}_{i}x_{i}+\frac{1}{2}\sum_{i,j=1}^{m}\tilde{c}_{ij}x_{i}x_{j}+\sum_{l=m+1}^{m_{2}}\tilde{c}_{l}x_{l}.
\]
Any coefficient $\gamma$ of $\tilde{q}$, can be written for instance
as $\gamma^{2}+\gamma-\gamma^{2}.$ By \eqref{eq:gammakl}, we can find a universal estimate for $\gamma-\gamma^{2}$, that depends on $\alpha$. We also observe that the
coefficients of $q_{2}$ are given by the formula $q_{2}=\overline{q}_{1}\circ\delta_{\tau^{-1}}$.
The estimate \eqref{eq:abar_kij} for $k=2$ and the fact that $\tau=2^{-l_{0}}$
can be universally fixed, independently of $x_{0}$, finally lead us to
the estimate
\begin{equation}
|D_{h}^{2}\tilde{q}|\le C_{\tau,\alpha}\left(\lVert D_h^{2}(f*\Gamma)\rVert_{L^{\infty}(B_{1})}+\lVert u\rVert_{L^{\infty}(B_{1})}\right)\label{eq:D^2_htildeq}
\end{equation}
for a suitable geometric constant $C_{\tau,\alpha}>0$ depending on the constants of \eqref{eq:gammakl} and \eqref{eq:abar_kij}.
From \eqref{eq:gammakl}, defining 
\[
\overline{C}_{\tau,\alpha,f,u}=\frac{c'C\tau^{-2}\left(\lVert D_h^{2}(f*\Gamma)\rVert_{L^{\infty}(B_{1})}+\lVert u\rVert_{L^{\infty}(B_{1})}\right)}{1-\tau^{\alpha}},
\]
we can establish the following quantitative estimate on small balls: 
\[
\begin{split}\|q_{k}-\tilde{q}\|_{L^{\infty}(B^G_{\tau^{k+1}})}\le & \,\overline{C}_{\tau,\alpha,f,u}\Bigg(\tau^{(\alpha+2)(k-1)}+\tau^{(\alpha+1)(k-1)}\sum_{i=1}^{m}\|x_{i}\|_{L^{\infty}(B^G_{\tau^{k+1}})}\\
+ & \tau^{\alpha(k-1)}\sum_{i,j=1}^{m}\|x_{i}x_{j}\|_{L^{\infty}(B^G_{\tau^{k+1}})}+\tau^{\alpha(k-1)}\sum_{l=m+1}^{m_{2}}\|x_{l}\|_{L^{\infty}(B^G_{\tau^{k+1}})}\Bigg)\\
\le & \widetilde{C}\,\overline{C}_{\tau,\alpha,f,u}\,\tau^{(2+\alpha)(k-1)},
\end{split}
\]
where we have used the proper intrinsic homogeneity of all
the monomials $x_i$, $x_ix_j$ and $x_l$, with $i,j=1,\ldots,m$ and $l=m+1,\ldots,m_2$. We would like to prove that 
\begin{equation}\label{eq:r2BG}
\lim_{r\to0}\frac{1}{r^{2}}\fint_{B^G_{\tau r}}|w_{0}(z)-\tilde{q}(z)|dz=0.
\end{equation}
Let us consider $r=\tau^{k}$ and then
\[
\begin{split}\frac{1}{r^{2}}\fint_{B^G_{\tau r}}|w_{0}(z)-\tilde{q}(z)|dz\le & \frac{1}{r^{2}}\fint_{B^G_{\tau r}}|w_{0}(z)-q_{k}(z)|dz+\frac{1}{r^{2}}\fint_{B^G_{\tau r}}|\tilde{q}(z)-q_{k}(z)|dz\\
= & \tau^{-2k}\fint_{B^G_{\tau^{k+1}}}|w_{0}(z)-q_{k}(z)|dz+\tau^{-2k}\fint_{B^G_{\tau^{k+1}}}|\tilde{q}(z)-q_{k}(z)|dz\\
\le & C\left(\lVert D_{h}^{2}(f*\Gamma)\rVert_{L^{\infty}(B_{1})}+\lVert u\rVert_{L^{\infty}(B_{1})}\right)\tau^{\alpha k}+\tau^{-2k}\|\tilde{q}-q_{k}\|_{L^{\infty}(B^G_{\tau^{k}})}
\end{split}
\]
goes to zero as $k\to\infty.$ If we choose $\kappa_0>0$ sufficiently small, then we have proved that
\[
\frac{1}{\tau^{2k}}\fint_{_{B_{\kappa_0 \tau^{2k}}}}|w_{0}(z)-R(z)|dz\le
\frac{1}{\tau^{2k}}\fint_{_{B^G_{\tau^k}}}|w_{0}(z)-R(z)|dz\to0.
\]
By the uniqueness of the second order
polynomial $R$ satisfying \eqref{eq:limR(x)} with $\kappa=\kappa_0$,
we get $\tilde{q}=R$. Taking into account \eqref{eq:estimD^2v_0} with $\sigma=\tau$
and \eqref{eq:D^2_htildeq} with $\alpha=1/2$, we obtain a new constant $C>0,$
such that
\[
\lvert D_{h}^{2}u_{0}(0)\rvert\le\lvert D_{h}^{2}v_{0}(0)\rvert+\lvert D_{h}^{2}w_{0}(0)\rvert\le C(\lVert D_{h}^{2}(f*\Gamma)\rVert_{L^{\infty}(B_{1})}+\lVert u\rVert_{L^{\infty}(B_{1})}).
\]
As a consequence, since $k_{0}>i_0$ and $i_0$ satisfies \eqref{eq:i_0}
we can apply Lemma~\ref{lem:dyadic-scale} with $r_1=2^{-k_0}$ and $r_2=c_02^{-k_0}$, so that taking into account the estimate \eqref{eq:PC_0} and the definition \eqref{eq:u_0} of $u_0$, we finally obtain a possibly larger constant, that we still denote by $C>0$, such that
\[
\lvert D_{h}^{2}u(x_{0})\rvert\le\lvert D_{h}^{2}u_{0}(0)\rvert+\lvert P_{c_02^{-k_{0}}}^{x_{0}}\rvert\le C(\lVert D_{h}^{2}(f*\Gamma)\rVert_{L^{\infty}(B_{1})}+\lVert u\rVert_{L^{\infty}(B_{1})}),
\]
concluding the proof.
\end{proof}
\bibliographystyle{amsalpha2}
\bibliography{References}

\end{document}